\author{Leo Hahn}
\title{Steady state and mixing of two run-and-tumble particles interacting through jamming and attractive forces}
\date{{\it Laboratoire de Mathématiques Blaise Pascal UMR 6620, CNRS, \\ Université Clermont Auvergne, Aubière, France} \\[\baselineskip] \today}
\newcommand{\tv}[1]{{\left\Vert #1 \right\Vert_{\text{\normalfont TV}}}}
\newtheorem{Thm}{Theorem}
\newtheorem{Prop}[Thm]{Proposition}
\newtheorem{Lem}[Thm]{Lemma}
\newtheorem{Def}[Thm]{Definition}
\newtheorem{Not}[Thm]{Notation}
\newtheorem{Rem}[Thm]{Remark}
\begin{document}

\maketitle

\abstract{We study the long-time behavior of two run-and-tumble particles on the real line subjected to an attractive interaction potential and jamming interactions, which prevent the particles from crossing. We provide the explicit invariant measure, a useful tool for studying clustering phenomena in out-of-equilibrium statistical mechanics, for different tumbling mechanisms and potentials. An important difference with invariant measures of equilibrium systems are Dirac masses on the boundary of the state space, due to the jamming interactions. Qualitative changes in the invariant measure depending on model parameters are also observed, suggesting, like a growing body of evidence, that run-and-tumble particle systems can be classified into close-to-equilibrium and strongly out-of-equilibrium models. We also study the relaxation properties of the system, which are linked to the timescale at which clustering emerges from an arbitrary initial configuration. When the interaction potential is linear, we show that the total variation distance to the invariant measure decays exponentially and provide sharp bounds on the decay rate. When the interaction potential is harmonic, we give quantitative exponential bounds in a Wasserstein-type distance.}

\section{Introduction}

Run-and-tumble particles (RTPs), used to model bacteria such as E.~coli~\cite{berg72,schnitzer93,berg04}, are characterized by random piecewise linear motion. In order to self-propel, RTPs consume energy at the particle level making them a prime example of active particles. This leads to out-of-equilibrium phenomena such as clustering~\cite{cates15} and accumulation at boundaries~\cite{elgeti15}, which remain active areas of research~\cite{smith22,metson23,arnoulx23}. In fact, these two features are strongly linked since cluster configurations are boundary configurations in joint position space~\cite{slowman16,ledoussal21,hahn23}. The two phenomena can be investigated by studying which configurations are typical in the sense that they have increased mass under the invariant measure~\cite{slowman17,angelani17,hahn23}. And, as an important second step, by determining the speed of convergence towards the invariant measure to understand at which timescale it becomes relevant~\cite{mallmin19,das20,guillin24}.

Accumulation at boundaries is already visible on the steady state of a single one-dimensional run-and-tumble particle. Indeed, the steady state of a RTP subjected to thermal noise and confined in a bounded interval differs from the Boltzmann measure and displays exponential accumulation at the boundaries~\cite{malakar18}. In the limit of vanishing thermal noise, Dirac masses appear on the boundary, showing an even stronger form of accumulation. The same phenomenon can also be observed on a one-dimensional RTP subjected to a confining potential. In this case~\cite{dhar19,basu20}, the support of the steady state is a bounded interval \( [x_-, x_+] \), which stands in stark contrast to the Boltzmann setting. Furthermore, depending on model parameters two very different behaviors can be observed at the boundaries \( x_\pm \). There are the close-to-passive configurations where the density of the steady state vanishes at the boundary and strongly out-of-equilibrium configurations where it diverges, indicating accumulation. Clustering, on the other hand, is more challenging as it requires the study of multiple interacting active particles. Such systems can be simulated~\cite{dolai20, turci21, deblais18} but exact results remain limited. The two particle case constitutes a notable exception because it can be reduced to the one particle case by considering the relative particle. This comes at the price of dealing with a more involved dynamics for the velocity. A first example of clustering, which can again be studied through its steady state, is given by two RTPs interacting through an attractive potential~\cite{ledoussal21}. Considering the relative particle maps two particles interacting through the potential {\( V(x) = \mu x^2/2 \)} to~\cite{basu20}. Hence the support of the relative particle's steady state is a bounded interval \( [x_-, x_+] \) and all hallmarks of accumulation at \( x_\pm \) can be reinterpreted as clustering. Clustering is also present in the steady state of two RTPs on a discrete 1D torus interacting through jamming both when particles have two~\cite{slowman16} and three internal states~\cite{slowman17}. In fact, generalizing to arbitrary tumbling mechanisms~\cite{hahn23} reveals the existence of two distinct universality classes. In the detailed jamming class, the invariant measure has Dirac masses on the boundary, indicating clustering, as well as uniform terms. In the global jamming class there are additional exponential terms. Hence the detailed jamming class is closer to the equilibrium setting, where there are only uniform terms, than the global jamming class.

This paper is dedicated to three processes modeling two RTPs on the real line subjected to an attractive interaction potential as in~\cite{ledoussal21}, with the difference of added jamming interactions, which prevent the particles from crossing. We consider the case where each particle velocity takes the values \( \pm 1 \) and the interaction potential is \( V(x) = c|x| \) (instantaneous linear process) or \( V(x) = \mu x^2/2 \) (instantaneous harmonic process). When \( V(x) = c|x| \), unlike~\cite{ledoussal21}, we also consider the case where the particle velocity can take the additional value \( 0 \) to account for particle reorientation, which is not instantaneous. This tumble mechanism is closer to the movement of actual bacteria such as E.~coli~\cite{slowman17,saragosti12}. For continuous-space processes with jamming, the boundary conditions of the invariant measure's Fokker-Planck equation are a persistent challenge. Rather than address it directly, different strategies have been developed to circumvent the problem. Examples include considering increasingly fine space discretizations~\cite{slowman16}, more and more peaked soft potentials~\cite{arnoulx19} and vanishing thermal noise~\cite{malakar18}. The shared idea being to introduce an approximate process with simpler boundary behavior, thus allowing the computation of the approximate invariant measure, and then taking a limit to recover the original invariant measure. The first drawback of these methods is that the approximate invariant measure is more complex than the original invariant measure. The second is that a limit has to be computed to recover the original invariant measure. Continuing the approach in~\cite{hahn23}, we use the mathematical framework of piecewise deterministic Markov processes (PDMPs)~\cite{davis93,davis84}, specifically their explicit generator, to work directly with the original process and avoid the drawbacks of approximation. With this technique, the boundary conditions satisfied by the invariant measure are implicit in the domain of the generator. All three processes show strong marks of clustering in the form of Dirac masses on the boundary and the instantaneous and finite linear process also display exponential concentration of mass. The density of the instantaneous harmonic process can diverge at the boundary for certain parameter choices. Furthermore, the behavior of the steady state of the finite linear process is particularly rich as its form depends on the model parameters. When the ratio of the velocity of the particles and the strength of the attractive interaction is below a certain threshold, the steady state is a product measure outside of the boundary. This product form is lost and one of the Dirac masses on the boundary disappears when the threshold is crossed. This is reminiscent of the separation into close-to-equilibrium and strongly out-of-equilibrium universality classes in~\cite{hahn23} and the shape transition in~\cite{dhar19,basu20}.

Finally, we tackle the question of the speed of convergence of these processes toward their steady state.\@ Indeed, while their steady state determines their long time behavior, it is irrelevant at small time scales. Understanding precisely when the asymptotic behavior takes over is thus of paramount importance. We depart from the spectral gap approach in~\cite{mallmin19,malakar18} which provides asymptotic results in the form of exponential decay with an uncontrolled prefactor. Instead, we obtain non-asymptotic bounds by constructing explicit couplings (see~\cite{malrieu16} and references therein for a review of convergence results for PDMPs). We start by showing exponential convergence in total variation of the instantaneous linear and finite linear process. The first key idea is to use the same synchronous coupling as in~\cite{guillin24} ensuring a weak form of order preservation that essentially reduces the study of convergence to the study of a hitting time. The second key idea, and the main difference with~\cite{guillin24}, is to use non-asymptotic large deviation results~\cite{wu2000deviation,lezaud01,cattiaux08} to bound the Laplace transform of this hitting time. We also show the sharpness of our bounds, a less studied but important aspect, by identifying obstacles to mixing as in~\cite{guillin24}. The method, which bears a loose resemblance to~\cite{fontbona12,lund96}, is general and can be extended to arbitrary tumbling mechanisms and linear potentials. In the case of the instantaneous harmonic process, we stick to the coupling method but work in Wasserstein instead of total variation distance to take advantage of the contractivity of the underlying deterministic dynamics. The proof is adapted from the coupling argument in~\cite{benaim12}, which was extended in~\cite{cloez15}.

The rest of the article is organized as follows. The remainder of this section gives a detailed description of the mathematical model and summarizes the main results. In Section~\ref{sec:invariant_measures} explicit formulae for the steady state are established. Finally, Section~\ref{sec:convergence} establishes quantitative convergence rates and Section~\ref{sec:optimality} shows that these rates are in many cases optimal.

\subsection{Model}

We consider two point particles on the real line interacting through an attractive potential. Their motion is also subjected to telegraphic noise and jamming interactions. Using the positions $x_1, x_2$ and the velocities $\sigma_1, \sigma_2$ of the particles this can be modeled as follows
\begin{itemize}
\item the $\sigma_i$ are independent Markov processes with the transition rates of Figure~\ref{fig:instantaneous_tumble_single_particle_velocity_transition_rates} (resp.~\ref{fig:finite_tumble_single_particle_velocity_transition_rates}),
\item the positions $x_i$ evolve according to the ODEs
$$
\partial_t x_1 = -V'(x_1 - x_2) + v \sigma_1(t), \quad \partial_t x_2 = -V'(x_2 - x_1) + v \sigma_2(t),
$$
where $V$ is an interaction potential satisfying $V(-x) = V(x)$.
\end{itemize}

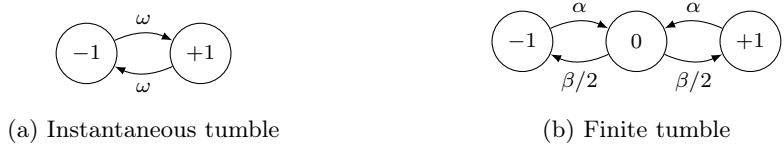
\begin{figure}[H]
	\centering
	\begin{subfigure}[t]{0.4\textwidth}
		\centering
		\begin{tikzpicture}[-latex, node distance=1.5cm, main/.style = {draw, circle, minimum size=.8cm, font=\footnotesize}]
		\node[main] (I+) {$-1$};
		\node[main] (I-) [right of=I+] {$+1$};
		
		\path[every node/.style={font=\footnotesize}]
		(I+) edge[bend left=25] node [above] {$\omega$} (I-)
		(I-) edge[bend left=25] node [below] {$\omega$} (I+)
		;
		\end{tikzpicture}
		\caption{Instantaneous tumble}
		\label{fig:instantaneous_tumble_single_particle_velocity_transition_rates}
	\end{subfigure}
	\begin{subfigure}[t]{0.4\textwidth}
		\centering
		\begin{tikzpicture}[-latex, node distance=1.5cm, main/.style = {draw, circle, minimum size=.8cm, font=\footnotesize}]
		\node[main] (F+) [right of=I-] {$-1$};
		\node[main] (F0) [right of=F+] {$0$};
		\node[main] (F-) [right of=F0] {$+1$};
		\path[every node/.style={font=\footnotesize}]
		(F+) edge[bend left=25] node [above] {$\alpha$} (F0)
		(F0) edge[bend left=25] node [below] {$\beta/2$} (F+)
		(F0) edge[bend right=25] node [below] {$\beta/2$} (F-)
		(F-) edge[bend right=25] node [above] {$\alpha$} (F0)
		;
		\end{tikzpicture}
		\caption{Finite tumble}
		\label{fig:finite_tumble_single_particle_velocity_transition_rates}
	\end{subfigure}
	\caption{Single-particle velocity transition rates}
	\label{fig:single_particle_velocity_transition_rates}
\end{figure}

The positions do not reach a steady state. Thus we focus on the relative position $x = x_2 - x_1$ and the relative velocity $\sigma = \sigma_2 - \sigma_1$. The relative position is governed by the ODE
$$
\partial_t x = -2 V'(x) + v \sigma(t)
$$
when $x>0$ and by jamming interactions, which will be detailed later, when $x=0$. The relative velocity follows the transition rates \ref{fig:instantaneous_tumble_relative_particle_velocity_transition_rates} (resp.~\ref{fig:finite_tumble_relative_particle_velocity_transition_rates}). We denote $\Sigma = \{-2, 0, 2\}$ (resp.~$\Sigma = \{-2, -1, 0_\pm, 0_0, 1, 2\}$) the state space of $\sigma$ and adopt the convention $v \cdot 0_\pm = v \cdot 0_0 = 0$.

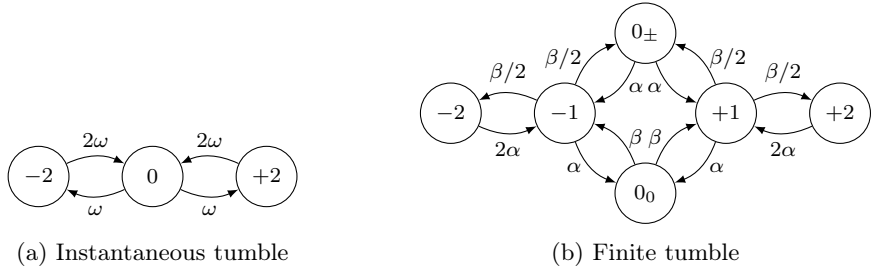
\begin{figure}[H]
	\centering
	\begin{subfigure}[t]{0.4\textwidth}
		\centering
		\begin{tikzpicture}[-latex, node distance=1.5cm, main/.style = {draw, circle, minimum size=.8cm, font=\footnotesize}]
		\node[main] (+2) {$-2$};
		\node[main] (0) [right of=+2] {$0$};
		\node[main] (-2) [right of=0] {$+2$};
		\path[every node/.style={font=\footnotesize}]
		(0) edge[bend right=25] node [below] {$\omega$} (-2)
		(0) edge[bend left=25] node [below] {$\omega$} (+2)
		(-2) edge[bend right=25] node [above] {$2\omega$} (0)
		(+2) edge[bend left=25] node [above] {$2\omega$} (0);
		\end{tikzpicture}
		\caption{Instantaneous tumble}
		\label{fig:instantaneous_tumble_relative_particle_velocity_transition_rates}
	\end{subfigure}
	\begin{subfigure}[t]{0.4\textwidth}
		\centering
		\begin{tikzpicture}[-latex, node distance=1.5cm, main/.style = {draw, circle, minimum size=.8cm, font=\footnotesize}]
		\node[main] (2) {$-2$};
		\node[main] (1) [right of=2] {$-1$};
		\node[main] (0pm) [above right  of=1] {$0_\pm$};
		\node[main] (00) [below right  of=1] {$0_0$};
		\node[main] (-1) [below right  of=0pm]{$+1$};
		\node[main] (-2) [right of=-1]{$+2$};
		\path[every node/.style={font=\footnotesize}]
		(2) edge[bend right=25] node [below] {$2\alpha$} (1)
		(-2) edge[bend left=25] node [below] {$2\alpha$} (-1)
		(1) edge[bend right=25] node [above] {$\beta/2$} (2)
		(1) edge[bend left=25] node [left] {$\beta/2$} (0pm)
		(1) edge[bend right=25] node [left] {$\alpha$} (00)
		(-1) edge[bend left=25] node [above] {$\beta/2$} (-2)
		(-1) edge[bend right=25] node [right] {$\beta/2$} (0pm)
		(-1) edge[bend left=25] node [right] {$\alpha$} (00)
		(0pm) edge[bend left=25] node [right] {$\alpha$} (1)
		(0pm) edge[bend right=25] node [left] {$\alpha$} (-1)
		(00) edge[bend right=25] node [right] {$\beta$} (1)
		(00) edge[bend left=25] node [left] {$\beta$} (-1)	
		;
		\end{tikzpicture}
		\caption{Finite tumble}
		\label{fig:finite_tumble_relative_particle_velocity_transition_rates}
	\end{subfigure}
	\caption{Relative velocity transition rates}
	\label{fig:relative_particle_velocity_transition_rates}
\end{figure}

Note that if the $\sigma_i$ follow figure \ref{fig:finite_tumble_single_particle_velocity_transition_rates} then $\sigma_2 - \sigma_1$ is not Markovian. Therefore, we split the state $\sigma_2 - \sigma_1 = 0$ into the states $0_\pm$ (corresponding to $\sigma_1 = \sigma_2 = \pm 1$) and $0_0$ (corresponding to $\sigma_1=\sigma_2=0$) to recover a Markov jump process (see figure \ref{fig:finite_tumble_relative_particle_velocity_transition_rates}). \\

Assume without loss of generality that $x(0) = x_2(0) - x_1(0) \ge 0$ and define the behavior of the process when the particles collide, i.e.~when $x(t) = 0$, as follows
\begin{itemize}
\item if $-2 V'(0) + v \sigma(t) > 0$ the self-propulsion overcomes the attraction resulting from the interaction potential and pushes the particles apart so the relative position $x$ follows the ODE $\partial_t x = -2 V'(x) + v \sigma(t)$ and immediately becomes positive,
\item if $-2 V'(0) + v \sigma(t) \le 0$ the attractive forces overcome the self-propulsion so the particles are `glued' together and $x$ remains $0$ until a velocity change occurs.
\end{itemize}
These jamming interactions lead to $x(t) \ge 0$ for all $t \ge 0$ and hence to the absence of particle crossings. The process can thus be recursively defined as follows.
\begin{Def}[Interacting run-and-tumble process] \label{def:generic_process} Let $\sigma(t)$ be a Markov jump process with the transition rates \ref{fig:instantaneous_tumble_relative_particle_velocity_transition_rates} (resp.~\ref{fig:finite_tumble_relative_particle_velocity_transition_rates}) and recursively construct
	$$
	x(t) = \max\left[0, \phi^{\sigma(T_n+)}_{t - T_n} \left(x(T_n)\right)\right] \text{ for } t \in [T_n, T_{n+1})
	$$
	where $0 = T_0 < T_1 < \cdots$ are the jump times of $\sigma(t)$ and $(\phi^\sigma_t)_{t \ge 0}$ is the flow associated to the ODE \mbox{$\partial_t x = -2 V'(x) + v \sigma$}. We call the $(\mathbb R_+ \times \Sigma)$-valued process $X(t) = (x(t), \sigma(t))$ interacting run-and-tumble process.
\end{Def}

This paper is dedicated to the long-time behavior of three special cases of the interacting run-and-tumble process
\begin{itemize}
\item the \textit{instantaneous linear} process where the particles are assumed to have {\it instantaneous} tumbles, i.e.~$\sigma$ follows figure \ref{fig:instantaneous_tumble_relative_particle_velocity_transition_rates}, and to interact through the {\it linear} potential $V(x) = c|x|$ (see figure \ref{fig:instantaneous_linear_realization}),
\item the \textit{finite linear} process where the particles are assumed to have {\it finite} tumble duration, i.e.~$\sigma$ follows figure \ref{fig:finite_tumble_relative_particle_velocity_transition_rates}, and to interact through the {\it linear} potential $V(x) = c|x|$ (see figure \ref{fig:finite_linear_realization}),
\item the \textit{instantaneous harmonic} process where the particles are assumed to have {\it instantaneous} tumbles, i.e.~$\sigma$ follows figure \ref{fig:instantaneous_tumble_relative_particle_velocity_transition_rates}, and to interact through the {\it harmonic} potential $V(x) = \frac{\mu}{2}x^2$ (see Figure~\ref{fig:instantaneous_harmonic_realization}).
\end{itemize}

Note that the linear potential (which resembles the 1D Coulomb interaction~\cite{ledoussal21}) and the harmonic potential are part of the larger class \( V(x) = a |x|^p \) of physical interactions also studied in~\cite{dhar19,gueneau23}. It would be interesting to consider the \textit{finite harmonic} process (with the interaction potential \( V(x) = \frac\mu2x^2 \) and the rates of Figure~\ref{fig:finite_tumble_relative_particle_velocity_transition_rates}), but computing its invariant measure explicitly requires solving a system of linear differential equations that seems intractable because of its size and non-constant coefficients (see Section~\ref{sec:instantaneous_linear_process}).

\begin{figure}[H]
	\centering
	\begin{subfigure}{0.32\textwidth}
	\centering
	\includegraphics[width=\textwidth]{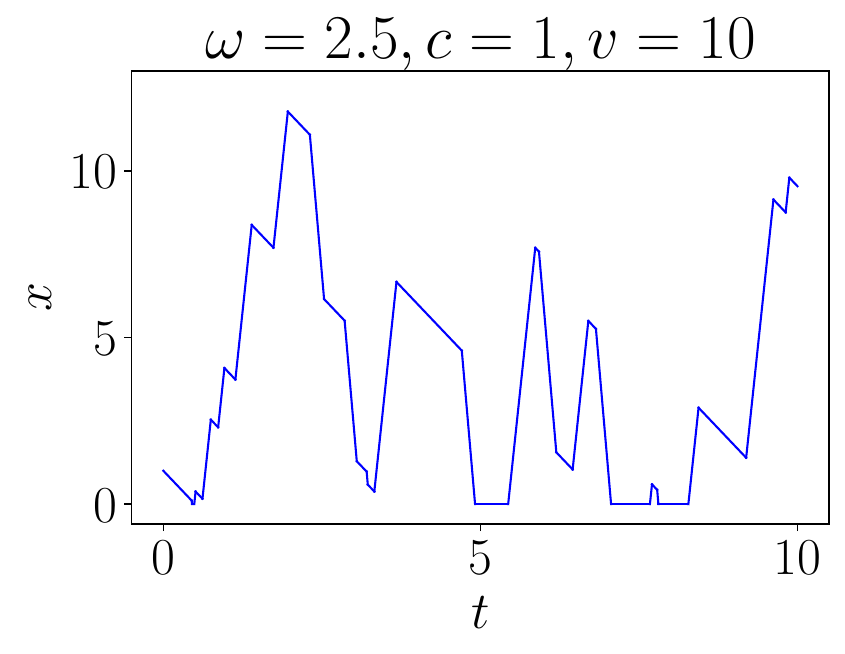}
	\caption{Instantaneous linear process}
	\label{fig:instantaneous_linear_realization}
	\end{subfigure}
	\begin{subfigure}{0.32\textwidth}
	\centering
	\includegraphics[width=\textwidth]{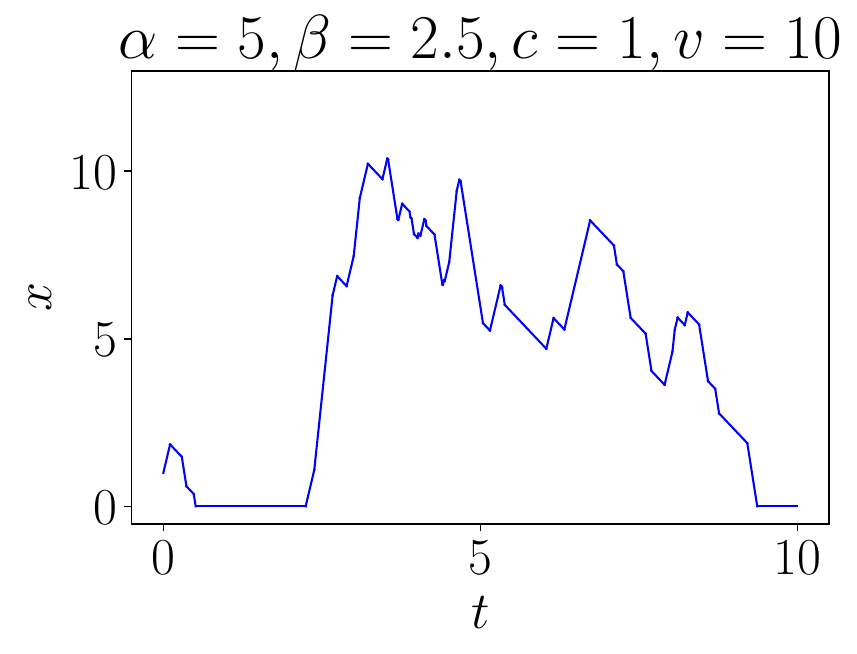}
	\caption{Finite linear process}
	\label{fig:finite_linear_realization}
\end{subfigure}
	\begin{subfigure}{0.32\textwidth}
	\centering
	\includegraphics[width=\textwidth]{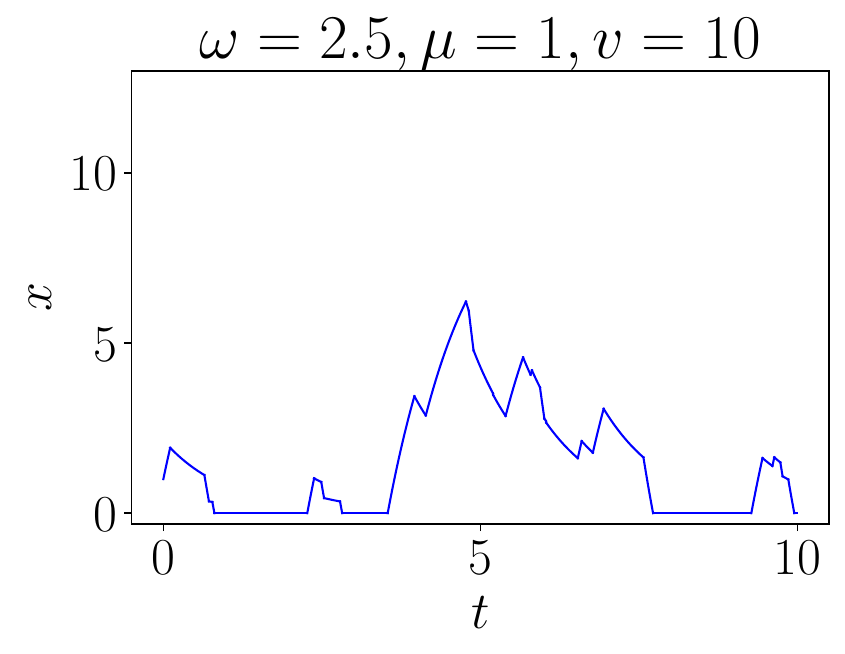}
	\caption{Instantaneous harmonic process}
	\label{fig:instantaneous_harmonic_realization}
\end{subfigure}
	\caption{Realization of the relative distance $x$}
	\label{fig:realizations}
\end{figure}

The following notation is convenient when writing down the extended generator of the interacting run-and-tumble process.

\begin{Not}
Denote $\mathcal Q = (q_{\sigma, \tilde \sigma})_{\sigma, \tilde \sigma \in \Sigma}$ the transition-rate matrix of the $\Sigma$-valued Markov jump process~$\sigma(t)$. In the instantaneous case (Figure~\ref{fig:instantaneous_tumble_relative_particle_velocity_transition_rates})
\begin{equation*}%\label{eq:instantaneous_Q}
\mathcal Q =  \bordermatrix{ & \tilde \sigma = 2 & \tilde \sigma =  0 & \tilde \sigma =  -2 \cr
\sigma = 2 & -2  {\omega} & 2  {\omega} & 0 \cr
\sigma = 0 &{\omega} & -2  {\omega} & {\omega} \cr
\sigma = -2 & 0 & 2  {\omega} & -2  {\omega}},
\end{equation*}
while in the finite case (Figure~\ref{fig:finite_tumble_relative_particle_velocity_transition_rates})
\begin{equation}\label{eq:finite_Q}
\mathcal Q = \bordermatrix{ & \tilde \sigma = 2 & \tilde \sigma = 1 & \tilde \sigma = 0_\pm & \tilde \sigma = 0_0 & \tilde \sigma = -1 & \tilde \sigma = -2 \cr
\sigma = 2 & -2  {\alpha} & 2  {\alpha} & 0 & 0 & 0 & 0 \cr
\sigma =1 & \frac{1}{2}  {\beta} & -{\alpha} - {\beta} & \frac{1}{2}  {\beta} & {\alpha} & 0 & 0 \cr
\sigma = 0_\pm& 0 & {\alpha} & -2  {\alpha} & 0 & {\alpha} & 0 \cr
\sigma = 0_0 & 0 & {\beta} & 0 & -2  {\beta} & {\beta} & 0 \cr
\sigma = -1 & 0 & 0 & \frac{1}{2}  {\beta} & {\alpha} & -{\alpha} - {\beta} & \frac{1}{2}  {\beta} \cr
\sigma = -2 & 0 & 0 & 0 & 0 & 2  {\alpha} & -2  {\alpha} 
}.
\end{equation}
\end{Not}

The interacting run-and-tumble process can be constructed as in~\cite[Section 2]{guillin24}, see also~\cite[Section 24]{davis93} and~\cite[Appendix A]{bierkens23}, leading to the following proposition.

\begin{Prop} \begin{itemize}\label{prop:feller+strong_markov+generator}
		\item[(i)] The interacting run-and-tumble process is a Feller process with the strong Markov property. Its state space is \( E = \mathbb R_+ \times \Sigma \).
		\item[(ii)] A bounded measurable \( f : E \rightarrow \mathbb R \) is in the domain the extended generator \( \mathcal L \) if and only if
		$$
		t \mapsto f(\max(0, \phi^\sigma_t(x)), \sigma) \text{ is absolutely continuous for all } (x, \sigma) \in E.
		$$
		For such \( f \) one has
		$$
		\mathcal L f(x, \sigma) = 1_{\{x > 0 \text{ or } -2V'(0) + v \sigma \ge 0\}} \left( -2V'(x) + v \sigma \right) \partial_x f(x, \sigma) + \sum_{\tilde \sigma} q_{\sigma, \tilde \sigma} f(x, \tilde \sigma).
		$$
	\end{itemize}
\end{Prop}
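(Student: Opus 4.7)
The plan is to realize the interacting run-and-tumble process as a piecewise deterministic Markov process (PDMP) in the sense of Davis and then invoke the general theory from \cite{davis93}, following closely the adaptation to a jamming setting given in \cite[Section 2]{guillin24}. The only new ingredient compared to \cite{guillin24} is the attractive drift $-2V'(x)$, which changes the deterministic flow between jumps but leaves the jump mechanism itself untouched.

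First I would package the data: locally compact state space $E = \mathbb R_+ \times \Sigma$; semiflow between jumps $\Phi_t(x,\sigma) = (\max(0,\phi^\sigma_t(x)), \sigma)$; jump intensity $\lambda(x,\sigma) = -q_{\sigma,\sigma}$, which is independent of $x$ and uniformly bounded by $\lambda_{\max} = \max_\sigma |q_{\sigma,\sigma}|$; and jump kernel supported on changes of $\sigma$ only, with probabilities $q_{\sigma,\tilde\sigma}/\lambda(x,\sigma)$. For $V(x) = c|x|$ (where $V'$ is piecewise constant on $\mathbb R_+$) and $V(x) = \mu x^2/2$, the vector field $x \mapsto -2V'(x) + v\sigma$ is globally Lipschitz on $\mathbb R_+$, so $\phi^\sigma_t(x)$ is well-defined and jointly continuous in $(t,x)$, and so is the reflected semiflow $\Phi_t$. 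Since $\lambda$ is bounded, there is no explosion, and the Feller and strong Markov properties then follow from \cite[Theorems 25.5 and 27.6]{davis93}, or equivalently by reproducing the argument of \cite[Section 2]{guillin24} verbatim; this establishes (i).

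For (ii) I would apply Davis' extended-generator theorem \cite[Theorem 26.14]{davis93}: a bounded measurable $f$ belongs to $\mathcal D(\mathcal L)$ if and only if $t \mapsto f(\Phi_t(x,\sigma))$ is absolutely continuous for every $(x,\sigma) \in E$, the additional integrability condition being automatic because $\lambda$ and $f$ are bounded. In that case $\mathcal L f(x,\sigma)$ is the initial right-derivative of this map plus the jump term $\sum_{\tilde\sigma} q_{\sigma,\tilde\sigma} f(x,\tilde\sigma)$. A short case analysis identifies the right-derivative: if $x>0$, then $\phi^\sigma_t(x) > 0$ for small $t$ and the derivative equals $(-2V'(x)+v\sigma)\partial_x f(x,\sigma)$; if $x=0$ and $-2V'(0)+v\sigma \ge 0$, the flow immediately enters $\mathbb R_+$ (or stays put if the quantity vanishes) and the derivative is $(-2V'(0)+v\sigma)\partial_x f(0,\sigma)$; finally if $x=0$ and $-2V'(0)+v\sigma < 0$, the maximum pins $\Phi_t$ to $(0,\sigma)$ for small $t$ and the derivative is zero. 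These three cases are precisely those selected by the indicator $1_{\{x>0 \text{ or } -2V'(0)+v\sigma \ge 0\}}$ in the claimed formula.

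The main subtlety is the boundary $x=0$: absolute continuity along trajectories is strictly weaker than pointwise differentiability on $E$, so $\partial_x f(0,\sigma)$ need not exist in the pinned case $-2V'(0)+v\sigma<0$, and the jamming boundary condition enters the description of $\mathcal D(\mathcal L)$ only implicitly, through the absolute continuity requirement. This is exactly the point where the statement departs from a textbook PDMP generator; however, since the jamming mechanism is identical to the one treated in \cite[Section 2]{guillin24}, the argument transfers without modification once the new drift $-2V'(x)$ is inserted in the vector field.
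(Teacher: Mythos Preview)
Your proposal is correct and follows essentially the same approach as the paper, which does not give a standalone proof but simply cites \cite[Section 2]{guillin24}, \cite[Section 24]{davis93} and \cite[Appendix A]{bierkens23} for the construction and the generator characterization. Your write-up in fact supplies more detail than the paper itself, correctly identifying the PDMP data, the relevant Davis theorems, and the boundary case analysis that produces the indicator in the generator formula.
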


Two RTPs on the real line interacting through an attractive potential were previously considered in~\cite{ledoussal21}. The models in the present article set themselves apart by the presence of jamming interactions, which prevent the particles from crossing. The challenges coming from the state space's jamming boundary at \( x = 0 \) are handled using the formalism of piecewise-deterministic Markov processes~\cite{hahn23}. This avoids approximations such as space discretization~\cite{slowman16,metson22}, soft potentials~\cite{arnoulx19,arnoulx23} or thermal noise~\cite{das20}. Another important difference with~\cite{ledoussal21} is the richer tumbling mechanism of the finite linear process, which accounts for the non-instantaneous nature of particle reorientation. This more closely models the biological reality of bacteria such as E.~coli~\cite{slowman17,saragosti12}.

\subsection{Main results}

This paper is devoted to the study of the long-time behavior of the instantaneous linear, finite linear and instantaneous harmonic process. Our main results are explicit formulae for their invariant measure and quantitative bounds for the speed of convergence towards these measures. \\

The PDMP formalism~\cite{davis93} provides access to the generator \(\mathcal{L}\) and enables the use of the characterization  
\[
\pi \text{ invariant } \iff \int \mathcal L f d\pi = 0 \text{ for all } f \in D(\mathcal L). 
\]  
This dual formulation of Fokker-Planck facilitates the complete determination of the invariant measure for the three processes considered in this article.

\begin{Thm}
\begin{itemize}
	\item[(i)] The unique invariant measure of the instantaneous linear process is
	$$
	\pi = \sum_{\sigma  \in \Sigma } \left( d_{\sigma } \delta_0 + a_\sigma e^{\zeta x} dx \right) \otimes \delta_{\sigma}
	$$
	where $\zeta, d_\sigma$ and $a_\sigma$ are explicit functions of $\omega, c$ and $v$ (see Proposition~\ref{prop:instantaneous_linear_invariant_measure}). Furthermore
	\[
	d_{+2} = 0 \text{ while } d_{0}, d_{-2} > 0.
	\]
	\item[(ii)] The unique invariant measure of the finite linear process is
	$$
	\pi = \left\{\begin{array}{cl}
	\sum_{\sigma  \in \Sigma} \left( d_\sigma  \delta_0 + c_2 a_{\sigma }\left(\zeta_2\right) e^{\zeta_2 x} dx \right) \otimes \delta_\sigma & \text{when } c < v \le 2c, \\
	\sum_{\sigma  \in \Sigma} \left( d_\sigma  \delta_0 + c_2 a_\sigma \left(\zeta_2\right) e^{\zeta_2 x} dx + c_3 a_\sigma \left(\zeta_3\right) e^{\zeta_3 x} dx \right) \otimes \delta_\sigma & \text{when } v > 2c,
	\end{array}\right.
	$$
	where $c_i, \zeta_i, d_\sigma$ and $a_\sigma(\zeta_i)$ are explicit functions of $\alpha, \beta, c$ and $v$ (see Proposition~\ref{prop:finite_linear_invariant_measure}). In addition
	\begin{itemize}
		\item[\( \bullet \)] if \( c < v \le 2 c\) then \( d_{+2} = 0 \) and \( d_\sigma > 0 \) for \( \sigma \ne +2 \),
		\item[\( \bullet \)] if \( v > 2c \) then \( d_{+2} = d_{+1} = 0 \) and \( d_\sigma > 0 \) for \( \sigma \notin \{ +2, +1\} \).
	\end{itemize}
	\item[(iii)] The unique invariant measure of the instantaneous harmonic process is
	\[
	\pi = \sum_{\sigma \in \Sigma} \left( d_\sigma \delta_0 + \pi_\sigma(x) dx\right) \otimes \delta_\sigma
	\]
	where \( \pi_\sigma \) are defined explicitly in terms of hypergeometric functions (see Section~\ref{sec:instantaneous_harmonic_invariant_measure}). Its support is the compact set \( [0, v/\mu] \times \Sigma \). Finally
	\[
	d_2 = 0 \text{ while } d_0, d_{-2} > 0,
	\]
	and the behavior of the density part of the \(x\)-marginal \(\sum_\sigma \pi_\sigma(x)\) is as follows  
	\begin{itemize}
		\item[\( \bullet \)] at \(x = 0\), it diverges when \(\mu \geq \omega\) and converges to a positive limit when \(\mu < \omega\),
		\item[\( \bullet \)] at \(x = v/\mu\), it diverges when \(\mu > \omega\), converges to a positive limit when \(\mu = \omega\) and vanishes when \(\mu < \omega\).  
	\end{itemize}
\end{itemize}
\end{Thm}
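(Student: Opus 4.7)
The plan is to use the dual characterization in Proposition~\ref{prop:feller+strong_markov+generator}: $\pi$ is invariant iff $\int \mathcal L f\, d\pi = 0$ for every $f \in D(\mathcal L)$. I posit the ansatz
$$
\pi = \sum_{\sigma \in \Sigma}\bigl(d_\sigma\,\delta_0 + \pi_\sigma(x)\,dx\bigr)\otimes\delta_\sigma
$$
and decompose the invariance condition by testing against two families. Test functions whose $x$-support sits in $(0,\infty)$ probe the interior: integrating the drift term by parts yields the stationary Fokker--Planck system
$$
\partial_x\bigl((-2V'(x) + v\sigma)\pi_\sigma(x)\bigr) = \sum_{\tilde\sigma} q_{\tilde\sigma,\sigma}\,\pi_{\tilde\sigma}(x),\qquad x > 0.
$$
Test functions not vanishing at $x=0$ produce the boundary contributions: the indicator in $\mathcal L$ forces $d_\sigma = 0$ whenever $\sigma$ is a repulsive velocity, i.e.\ $-2V'(0)+v\sigma > 0$, and for each glued velocity ($-2V'(0)+v\sigma \le 0$) gives the balance
$$
(-2V'(0)+v\sigma)\,\pi_\sigma(0^+) + \sum_{\tilde\sigma} q_{\tilde\sigma,\sigma}\,d_{\tilde\sigma} = 0.
$$
Determining $\pi$ thus reduces to solving the interior ODE, then a finite linear system for the $d_\sigma$, and finally normalising.

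For the two linear processes the coefficient $-2V'(x)+v\sigma = -2c + v\sigma$ is constant on $(0,\infty)$, so the interior system has constant coefficients; its integrable solutions are combinations of exponentials $e^{\zeta x}$ with $\zeta < 0$, where $\zeta$ is a root of the characteristic polynomial $\det\bigl(\mathcal Q^\top - \zeta\,\mathrm{diag}(v\sigma - 2c)\bigr) = 0$ and $(a_\sigma(\zeta))_\sigma$ is a corresponding right eigenvector. A direct analysis of this polynomial shows that it has exactly one negative root in the instantaneous case and, in the finite case, either one root when $v \le 2c$ or two roots when $v > 2c$, which is precisely the dichotomy in (ii). The repulsive-velocity constraint immediately gives $d_{+2}=0$ and, when $v > 2c$, also $d_{+1}=0$; the remaining $d_\sigma$ are read off by solving the boundary linear system and their strict positivity follows from its explicit inversion.

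For the harmonic potential the drift $-2\mu x + v\sigma$ vanishes at $x = v\sigma/(2\mu)$, which confines the support to $[0,v/\mu]\times\Sigma$. The interior system now has rational coefficients with regular singular points at both endpoints; the change of variable $u = \mu x/v$ followed by elimination of two of the three components reduces it to a single equation of hypergeometric type, producing the closed form of $\pi_\sigma$ in Section~\ref{sec:instantaneous_harmonic_invariant_measure}. The trichotomy in (iii) is then a reading of the Frobenius indicial exponents at $u=0$ and $u=1$, which depend only on $\mu/\omega$ and dictate whether the density vanishes, converges to a positive limit or diverges at each endpoint. Identifying the correct hypergeometric equation out of the coupled system, and then selecting the combination of Kummer solutions that is non-negative, integrable on $(0,v/\mu)$ and consistent with the boundary relations, is where the main technical obstacle lies.

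Uniqueness of the invariant probability measure follows from Feller continuity together with irreducibility of the jump chain on $\Sigma$ and the confining nature of the dynamics (mean-reverting for the harmonic potential, strictly negative exponents $\zeta_i$ for the linear potentials), which together make every compact subset of $E$ petite and the process positive Harris recurrent.
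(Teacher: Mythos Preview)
Your overall scheme matches the paper's: use the generator characterization $\int\mathcal L f\,d\pi=0$ to obtain the bulk Fokker--Planck system, solve it (a constant-coefficient linear ODE in the linear cases, a hypergeometric equation in the harmonic case), discard the non-integrable branches, and then determine the Dirac weights at $x=0$. The genuine difference is in this last step. You read off $d_\sigma=0$ for repulsive $\sigma$ and a flux-balance linear system for the glued $\sigma$ directly from the boundary terms produced by testing with $f$ not vanishing at $0$. The paper instead argues $d_\sigma=0$ for repulsive $\sigma$ by ergodicity---the process leaves $(0,\sigma)$ instantaneously, so $\frac{1}{t}\int_0^t 1_{\{X(s)=(0,\sigma)\}}\,ds=0$ a.s.\ and hence $\pi(\{(0,\sigma)\})=0$---and then fixes the remaining $d_\sigma$ together with the overall normalisation by imposing that the $\sigma$-marginal of $\pi$ equals the invariant measure $\pi_{\mathcal Q}$ of the velocity jump chain. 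Your route keeps everything inside the single identity $\int\mathcal L f\,d\pi=0$ and makes the boundary conditions explicit; the paper's route sidesteps the boundary integration by parts entirely, which is convenient in the harmonic case where $\pi_\sigma(0^+)$ can diverge, and gives the normalisation for free. For the harmonic bulk the paper does not rederive the hypergeometric solution but imports it from~\cite{basu20}, whereas you sketch its direct derivation; both land in the same place. (A minor sign slip: your boundary balance should read $\sum_{\tilde\sigma} q_{\tilde\sigma,\sigma}\,d_{\tilde\sigma} = (-2V'(0)+v\sigma)\,\pi_\sigma(0^+)$, and it in fact holds for every $\sigma$, not only the glued ones.)
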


Observe that, in the bulk $\{x > 0\}$, the instantaneous linear invariant measure is a re-weighting of the invariant measure of the process in~\cite[Section III.A]{ledoussal21}. This is because the processes have the same behavior in the bulk and they enter the bulk the same way (always through the point~\mbox{$(x, \sigma) = (0, 2)$}). However, the coefficients of the Dirac masses on the boundary~$\{x = 0\}$ differ. Furthermore, all the processes in the present article have state space $\mathbb R_+ \times \Sigma$ instead of $\mathbb R \times \Sigma$ since the particles cannot cross and therefore their inter-particle distance remains non-negative. Also note that, unlike in~\cite[Section IV]{ledoussal21}, the invariant measures in the present article are all unique. As in~\cite{hahn23}, the central observation that allows us to determine the invariant measure of the instantaneous linear process is that, in the bulk, Fokker-Planck takes the form of a system of ordinary linear differential equations. This implies that the invariant measure is given by a matrix exponential.

It is noteworthy that the richer, biologically relevant~\cite{saragosti12,slowman17}, relative velocity transition rates~\ref{fig:finite_tumble_relative_particle_velocity_transition_rates} lead to different formulae for the invariant measure depending on whether $c \le v \le 2c$ or $v > 2c$. This behavior comes from the fact that when $(x, \sigma) = (0, 1)$
\begin{itemize}
\item if $c < v \le 2c$ then the attractive interaction  between the particles is stronger than the self-propulsion so the process is `glued' to $(0, 1)$ until a velocity change occurs, 
\item if $v > 2c$ then the self-propulsion outweighs the attractive interaction and the particles immediately separate.
\end{itemize}
Hence the nature of the boundary point $(x, \sigma) = (0, 1)$ depends on $c$ and $v$, leading to the presence of a Dirac mass only when $v \le 2c$. Note that, once restricted to the bulk, the invariant measure is a product measure for \( v \le 2c \) but not for \( v > 2c \). These important qualitative differences depending on whether \( \frac v c \le 2 \) or \( \frac v c > 2\) are reminiscent of the universality classes in~\cite{hahn23} and the shape transition in~\cite{dhar19,basu20}. The strategy to determine the invariant measure remains the same as for the instantaneous linear process.

For the same reasons as the instantaneous linear process, the invariant measure of the instantaneous harmonic process is a re-weighting of the invariant measure in~\cite{basu20} in the bulk \( \{ x > 0 \} \) but has a different state space and Dirac masses on the boundary \( \{ x = 0 \} \). Hence it has compact support and displays the same shape transition as in~\cite{basu20}. \\

We also quantify the speed of convergence towards the invariant measure. In the linear potential case, we delay the statement of precise non-asymptotic convergence bounds until Theorem~\ref{thm:long_time_behavior_linear_process} and instead give asymptotic bounds which are easier to state and interpret.

\begin{Thm}\label{thm:convergence_of_all_processes}
\begin{itemize}
\item[(i)] In the instantaneous linear case
$$
\frac{1}{2} \omega \frac{c^2}{v^2} \le \varliminf_{t \rightarrow +\infty} -\frac{1}{t} \log \tv{\delta_{(x, \sigma)} P_t  - \pi } \le \varlimsup_{t \rightarrow +\infty} -\frac{1}{t} \log \tv{\delta_{(x, \sigma)} P_t  - \pi } \le 4 \omega \frac{c^2}{v^2}
$$
for all $(x, \sigma) \in E$ where
\[
	\tv{\eta - \tilde\eta} = \inf \left\{ \mathbb P((x, \sigma) \ne (\tilde x, \tilde \sigma)) : (x, \sigma) \sim \eta \text{ and } (\tilde x, \tilde \sigma) \sim \tilde \eta  \right\} 	
\]
is the total variation distance (see Theorem~\ref{thm:long_time_behavior_linear_process}~and Proposition~\ref{prop:explicit_decay_rates}).
\item[(ii)] In the finite linear case
\begin{align*}
\frac{3 - \sqrt{5}}{8} \min\left(\alpha \left(1 + \frac{\alpha}{\beta}\right) \frac{c^2}{v^2}, \alpha \frac{c}{v}\right) &\le \varliminf_{t \rightarrow +\infty} -\frac{1}{t} \log \tv{\delta_{(x, \sigma)} P_t  - \pi } & \\
&\le \varlimsup_{t \rightarrow +\infty} -\frac{1}{t} \log \tv{\delta_{(x, \sigma)} P_t  - \pi } \le 4 \alpha \left( 1 + \frac{\alpha}{\beta} \right) \frac{c^2}{v^2}
\end{align*}
for all $(x, \sigma) \in E$ (see Theorem~\ref{thm:long_time_behavior_linear_process}~and Proposition~\ref{prop:explicit_decay_rates}).
\item[(iii)] In the instantaneous harmonic case, for \( p \ge 1 \)
$$
\min \left( \mu, \frac\omega p\right) \le \varliminf_{t \rightarrow +\infty} -\frac{1}{t} \log \mathcal W_p \left(\delta_{(x, \sigma)} P_t, \pi\right)
$$
for all \( (x, \sigma) \in E\) (see Theorem~\ref{thm:long_time_behavior_harmonic_process_wasserstein} and Remark~\ref{rem:instantaneous_harmonic_asymptoic_decay_rate}) where
$$
\mathcal W_p\left(\eta, \tilde \eta\right) := \inf \left\{ \left(\mathbb E \left|x - \tilde x\right|^p\right)^{\frac{1}{p}} + \mathbb P\left( \sigma \neq \tilde \sigma \right) : (x, \sigma) \sim \eta \text{ and } (\tilde x, \tilde \sigma) \sim \tilde \eta\right\}
$$
is a Wasserstein-type distance (see Definition~\ref{def:mixed_distance}).
\end{itemize}
\end{Thm}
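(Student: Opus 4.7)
For parts (i) and (ii) the strategy is to build a synchronous coupling of two copies $(X_t, \tilde X_t)$ of the process and to reduce convergence to the tail of the resulting coupling time. Because the matrix $\mathcal Q$ depends only on $\sigma$, one can couple the velocity components so that they meet at some a.s.~finite time $\tau_v$ and agree forever after. Since $V'(x) = c$ on $\{x > 0\}$, after $\tau_v$ both positions share the same drift $-2c + v\sigma(t)$ in the bulk, so $x_t - \tilde x_t$ stays constant until one of them reaches the jamming boundary; once one trajectory is jammed at $0$ the other eventually also reaches $0$, at which point the two copies are identified forever. Hence the coupling time $T_c$ is bounded by $\tau_v$ plus the hitting time $T_0$ of $0$ for the ``higher'' copy, and controlling $T_c$ reduces to controlling this hitting time.

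To turn this into an explicit exponential rate I would apply the non-asymptotic large-deviation inequalities for additive functionals of Markov jump processes of~\cite{wu2000deviation,lezaud01,cattiaux08} to the integral $\int_0^t v\sigma(s)\,ds$. Under the stationary distribution of the velocity chain one has $\mathbb E[v\sigma] = 0$, so $x_t$ decreases on average at rate $2c$ and $T_0$ concentrates around $x_0/(2c)$; the large-deviation bounds then give an explicit interval of $\lambda$'s for which $\mathbb E[e^{\lambda T_0}] < \infty$, yielding total-variation decays $\tv{\delta_{(x, \sigma)} P_t - \pi} \le C(x, \sigma)\, e^{-\lambda t}$ with $\lambda$ of the advertised order $\omega c^2/v^2$, respectively $\alpha(1 + \alpha/\beta)\, c^2/v^2$. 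Matching lower bounds should follow the obstacle-to-mixing strategy of~\cite{guillin24}: test the total-variation distance against carefully chosen sets, for instance trajectories that remain at very large $x$, or states that the invariant measure charges only via the boundary (such as $(0, +1)$ when $v > 2c$), and show that the probability of landing in such sets decays no faster than the claimed rate.

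For part (iii) the harmonic interaction makes the underlying flow contractive with rate $2\mu$, so it is natural to leave total variation for the Wasserstein-type distance $\mathcal W_p$ and follow the coupling approach of~\cite{benaim12,cloez15}. Using again the synchronous coupling, whenever $\sigma = \tilde \sigma$ the difference obeys $\partial_t(x - \tilde x) = -2\mu(x - \tilde x)$ in the bulk, while jamming at $0$ can only reduce $|x - \tilde x|$; whenever $\sigma \ne \tilde \sigma$, positions separate at a bounded rate but each jump of $\sigma$ provides a chance of order $\omega$ to re-couple the velocities. Injecting these two regimes into a Lyapunov functional of the form $F_t = \mathbb E[|x_t - \tilde x_t|^p] + K\, \mathbb P(\sigma_t \ne \tilde \sigma_t)$ with a suitably chosen constant $K > 0$ yields a differential inequality $\frac{d}{dt} F_t \le -\min(p\mu, \omega)\, F_t$, hence the announced rate $\min(\mu, \omega/p)$ after taking $p$-th roots. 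The most delicate step will be the finite linear case, whose six-state generator and regime dependence around $v = 2c$ (reflecting the loss of the atom at $(0, +1)$) complicate both the exponential-moment estimate and the construction of obstacle sets, and are ultimately what force the extra $\min$ with $\alpha c/v$ in the lower bound of (ii); for (iii) the subtle point is rather the calibration of $K$ so that the $L^p$-scale of the position part and the indicator scale of the velocity part fit into a single exponential inequality, precisely the mechanism the Benaïm--Cloez framework automates.
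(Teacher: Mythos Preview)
Your coupling strategy for the \emph{upper} bound on $\tv{\delta_{(x,\sigma)}P_t-\pi}$ in (i) and (ii) is essentially the paper's: synchronous velocity coupling, order preservation once velocities agree, reduction to the hitting time $\tau_0$ of $0$, and exponential moments of $\tau_0$ via the non-asymptotic large-deviation bounds of Wu/Lezaud. That part is fine.

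The genuine gap is in the \emph{optimality} direction. You gesture at ``obstacle sets'' and mention the atom at $(0,+1)$ when $v>2c$; the paper does not use this at all. Its mechanism is purely a tail comparison at large $x$: for any $a>0$,
\[
\tv{\delta_{(x,\sigma)}P_t-\pi}\ \ge\ \pi(\{x>at\}) - \mathbb P_{(x,\sigma)}(x(t)>at),
\]
so one must show that the second term decays strictly faster than $e^{-a\lambda_\pi t}$ for some small $a$, and then take $a\downarrow 1/\Lambda'(0)$ where $\Lambda(u)=\lim_L L^{-1}\log\mathbb E[e^{u\tau_L}]$ is computed by solving an ODE for the Laplace transform of the hitting time of level $L$. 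In the finite case this ODE is six-dimensional, and the paper handles it not via the $(0,+1)$ atom but by the \emph{single-velocity decomposition} $x(t)\le y_1(t)+y_2(t)$ with $y_i$ driven by the individual $\sigma_i$, reducing to two three-dimensional problems. Relatedly, your explanation for the extra $\min(\cdot,\alpha c/v)$ in the lower bound of (ii) is off: it does not come from the $v=2c$ regime change or the boundary atom, but from the shape of Lezaud's concentration inequality for $I(R)$ in Proposition~\ref{prop:explicit_decay_rates}(ii), which behaves like $\alpha(1+\alpha/\beta)R^2$ for small $(1+\alpha/\beta)R$ and like $\alpha R$ for large values.

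For (iii), your Lyapunov functional $F_t=\mathbb E|x_t-\tilde x_t|^p+K\,\mathbb P(\sigma_t\neq\tilde\sigma_t)$ with a differential inequality is close in spirit but does not quite close as written: on $\{\sigma\neq\tilde\sigma\}$ the position gap can \emph{grow}, so $\dot F_t\le -\min(p\mu,\omega)F_t$ fails without an extra ingredient. The paper instead splits in time: on the event $A_{\beta t}=\{\sigma_s=\tilde\sigma_s\text{ for all }s\ge\beta t\}$ it uses the contraction $|x_t-\tilde x_t|\le e^{-2\mu(1-\beta)t}|x_{\beta t}-\tilde x_{\beta t}|$, and on $A_{\beta t}^c$ it applies H\"older with exponents $q>p$ and $s$ (so that $p/q+1/s=1$), then optimises $\beta$. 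This is why the rate involves $\omega/s$ and only reaches $\min(\mu,\omega/p)$ after letting $q\to\infty$; a direct differential inequality would need a comparable device to absorb the growth on $\{\sigma\neq\tilde\sigma\}$.
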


Note that the upper and lower bounds for the exponential decay rate only differ by a constant factor in the instantaneous linear case. Hence we have fully determined its dependence on all model parameters. The lower bound is shown using the same synchronous coupling as in~\cite{guillin24}. However, unlike in this previous work, the non-compactness of the state space makes the mixing time infinite, which is why convergence speed under Dirac initial distributions is considered instead. The upper bound is deduced from concentration inequalities reflecting the fact that the process is slow to explore parts of the state space where the inter-particle distance $x$ is large. Theorem~\ref{thm:long_time_behavior_linear_process} reveals that there is prefactor to the exponential decay, which grows exponentially with~\( x \). It is worth noting that related models have been studied through Foster-Lyapunov techniques~\cite{fontbona16} and hypocoercivity methods~\cite{calvez14}.

In the finite linear case there is a mismatch between the upper and lower bound when
$$
\alpha \left(1 + \frac{\alpha}{\beta}\right) \frac{c^2}{v^2} \gg \alpha \frac{c}{v} \iff \left(1 + \frac{\alpha}{\beta}\right) \frac{c}{v} \gg 1.
$$
Because $0 < c/v < 1$ this can only happen when ${\alpha}/{\beta} \gg 1$ meaning that the particles spend the overwhelming majority of their time tumbling and not running. Hence the mismatch only occurs in the least relevant regime from a modeling perspective. When $\left(1 + \alpha/\beta\right) ({c}/{v})$ is upper bounded, the upper and lower bound differ at most by a constant factor. Note that, in the limit \( \beta \to \infty \), the finite linear process becomes an instantaneous linear process with \( \omega = \alpha/2 \), which is coherent with the rates of Theorem~\ref{thm:convergence_of_all_processes}. The core ideas of the proofs remain the same as for the instantaneous linear process. The prefactor of the decay is again exponential in \( x \).

The exponential ergodicity of the instantaneous harmonic process is shown following the coupling approach in~\cite{benaim12}. The idea is to use the exponential contractivity of the deterministic dynamics to obtain exponential decay in a Wasserstein-type distance. It is noteworthy that, for fixed \( p \ge1 \), this decay rate is, up to a constant factor, the minimum of the deterministic contraction rate \( 2\mu \) and the rate of convergence \( 2 \omega \) of the relative velocity \( \sigma(t) \) towards its invariant measure. Using Foster-Lyapunov techniques~\cite{meyn93,hairer11} would yield exponential decay in total variation distance, but the resulting bounds would be quantitatively poor.

\section{Invariant measures}\label{sec:invariant_measures}

We delay the proof of the existence and uniqueness of invariant probabilities for our models (see Theorem~\ref{thm:long_time_behavior_linear_process} and Theorem~\ref{thm:long_time_behavior_harmonic_process_wasserstein}) and turn to the derivation of exact formulae for these probabilities under the assumption that they exist and are unique.

\subsection{Instantaneous linear process}\label{sec:instantaneous_linear_process}

We start by outlining the general approach we will use to compute the invariant measure of all our models. This method is similar in spirit to~\cite{frydel21,ledoussal21,hahn23} and always follow the same basic steps
\begin{enumerate}
\item write down differential equations for the density of the invariant measure in the bulk~$\mathbb R_{>0} \times \Sigma$,
\item find the general solution of these equations (which contains constants that remain to be fixed),
\item use integrability and symmetry constraints to reduce the number of constants,
\item use the fact that the $\sigma$-marginal of $\pi$ is the invariant measure of the $\Sigma$-valued process with generator $\mathcal Q$ to determine the remaining constants and the weights of the Dirac masses on the boundary.
\end{enumerate}
While these steps can be applied to a wide range of one-dimensional systems, they do not always make it possible to fix all the constants. However, the following additional step always enables the full characterization of the invariant measure
\begin{itemize}
\item[5.] reinject the result of the previous steps into the generator characterization of $\int \mathcal L f d\pi = 0$.
\end{itemize}

Interestingly this approach can also be applied to systems with Gaussian noise~\cite{frydel21, malakar18}. The main challenge is always finding an analytical solution for the system of differential equations in the bulk. The finite harmonic process (i.e.~\( V(x) = \frac\mu2x^2 \) together with the rates of Figure~\ref{fig:finite_tumble_relative_particle_velocity_transition_rates}) provides an example where this seems challenging. With this in mind, we turn to the computation of the invariant measure of the instantaneous linear process.

\begin{Prop}[Invariant measure of the instantaneous linear process] \label{prop:instantaneous_linear_invariant_measure}

The unique invariant measure of the instantaneous linear process is given by
$$
\pi = \sum_{\sigma  \in \Sigma } \left( d_{\sigma } \delta_0 + a_\sigma e^{\zeta x} dx \right) \otimes \delta_{\sigma}
$$
where $\zeta = -\frac{2 c \omega}{v^{2} - c^{2}}$ and
\begin{align*}
d_2 &= 0, & d_0 &= \frac{c}{c+ v}, & d_{-2} &= \frac{cv}{(c + v)^2}, \\
a_2 &= \frac{c \omega}{2  {\left(v^2 - c^2\right)}}, & a_0 &= \frac{c \omega}{{\left(c + v\right)}^{2}}, & a_{-2} &= \frac{ c  {\left(v - c\right)} \omega}{2  {\left(c + v\right)}^{3}}.
\end{align*}
\end{Prop}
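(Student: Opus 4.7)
The plan follows the five-step outline sketched before the statement. I would start with the ansatz
$$\pi = \sum_{\sigma \in \Sigma} \left( d_\sigma \delta_0 + \pi_\sigma(x)\, dx\right) \otimes \delta_\sigma,$$
reflecting the expected Dirac accumulation at the jamming boundary together with a smooth bulk density. Applying the identity $\int \mathcal L f\, d\pi = 0$ to a smooth $f$ supported in $(0, \infty) \times \Sigma$ and integrating by parts in $x$ yields the stationary Fokker-Planck system
$$(-2c + v\sigma)\, \partial_x \pi_\sigma(x) = \sum_{\tilde \sigma \in \Sigma} q_{\tilde\sigma, \sigma}\, \pi_{\tilde\sigma}(x), \qquad x > 0.$$
The exponential ansatz $\pi_\sigma(x) = a_\sigma e^{\zeta x}$ reduces this to the eigenvalue problem $(\mathcal Q^T - \zeta D)\, a = 0$ with $D = \mathrm{diag}(-2c + v\sigma)_{\sigma \in \Sigma}$. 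A direct computation shows that its characteristic determinant is a cubic with roots $0$ (since $\mathcal Q^T$ already kills the invariant measure of $\mathcal Q$), one positive root, and $\zeta = -2c\omega/(v^2 - c^2)$, negative under $v > c$. Integrability on $\mathbb R_+$ selects this last root, and the associated one-dimensional eigenspace fixes $a_0$ and $a_{-2}$ as explicit multiples of $a_2$.

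To determine the remaining unknowns $d_\sigma$ and the normalization of $a_2$, I would re-apply $\int \mathcal L f\, d\pi = 0$, this time to smooth $f$ that do \emph{not} vanish at $x = 0$. The bulk part, after integration by parts, cancels with the interior Fokker-Planck equation and collapses to the boundary identity
$$2(v-c)\, d_2\, \partial_x f(0, 2) + \sum_{\sigma \in \Sigma} f(0, \sigma) \bigl[ (\mathcal Q^T d)_\sigma - (-2c + v\sigma)\, a_\sigma \bigr] = 0,$$
where the $\partial_x f(0, 2)$ term is produced by the indicator $1_{\{-2V'(0) + v\sigma \ge 0\}}$ in Proposition~\ref{prop:feller+strong_markov+generator}(ii), which is active only at $\sigma = 2$ (since $v > c$). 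Varying $f(0, \cdot)$ and $\partial_x f(0, 2)$ independently forces $d_2 = 0$ together with the three relations $(\mathcal Q^T d)_\sigma = (-2c + v\sigma)\, a_\sigma$; two of these are linearly independent and give $d_0, d_{-2}$ as explicit linear functions of $a_2$. The normalization $\pi(E) = 1$ then fixes $a_2$, and the announced closed-form values fall out after simplification.

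The main technical delicacy lies in this boundary extraction: one needs to verify that smooth compactly supported functions with arbitrary prescribed traces $f(0, \sigma)$ and prescribed derivative $\partial_x f(0, 2)$ do lie in the extended domain described in Proposition~\ref{prop:feller+strong_markov+generator}(ii), and to make sure the generator's indicator produces the term $2(v-c)\, d_2\, \partial_x f(0, 2)$ and no analogous term at $\sigma \in \{0, -2\}$, where the boundary drift is non-positive and the indicator is silent. Once this is in place, the remainder is elementary linear algebra: locating the negative root of the characteristic cubic, extracting its eigenvector, and solving two small linear systems.
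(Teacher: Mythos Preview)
Your argument is correct, but it takes a different route from the paper's own proof. Both begin identically: derive the bulk ODE $-\mathcal V \Pi' + \mathcal Q^t \Pi = 0$, diagonalise $\mathcal A = \mathcal V^{-1}\mathcal Q^t$, and retain only the integrable mode $e^{\zeta x}$ with $\zeta = -2c\omega/(v^2-c^2)$. From there the two arguments diverge. You implement what the outline labels step~5: you reinject the ansatz into $\int \mathcal L f\, d\pi = 0$ with test functions that do not vanish at $x=0$, extract the boundary identity $(\mathcal Q^T d)_\sigma = (-2c+v\sigma)a_\sigma$ together with the forcing $d_2 = 0$ from the $\partial_x f(0,2)$ coefficient, and close with the normalisation $\pi(E)=1$. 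The paper instead uses step~4 alone: it argues $d_2 = 0$ by ergodicity (the process leaves $(0,2)$ instantaneously, so $\int_0^t 1_{\{X_s=(0,2)\}}\,ds = 0$ a.s.), and then fixes $a_2$ and the remaining $d_\sigma$ directly from the $\sigma$-marginal constraints $\pi(\mathbb R_+\times\{\sigma\}) = \pi_{\mathcal Q}(\{\sigma\}) = \tfrac14,\tfrac12,\tfrac14$, bypassing any boundary analysis of the generator.

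Each approach has its merits. The paper's route is shorter and avoids the domain check you flag at the end; the ergodicity argument for $d_2=0$ is transparent, and the marginal identities are immediate consequences of the fact that the $\sigma$-component is autonomously Markov. Your route is the more systematic one---indeed the paper remarks that step~5 ``always enables the full characterization'' when step~4 falls short---and it makes explicit the flux matching condition $(\mathcal Q^T d)_\sigma = (-2c+v\sigma)a_\sigma$ that links the Dirac weights to the boundary trace of the bulk density. Your observation that only two of the three boundary relations are independent is correct and follows from $\sum_\sigma(-2c+v\sigma)a_\sigma = 0$, itself a consequence of the bulk eigenvalue equation with $\zeta\neq 0$.
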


\begin{proof} Let $\pi$ be the unique invariant measure of the instantaneous linear process and let $\pi_2, \pi_0, \pi_{-2}$ be the Borel measures on $\mathbb R_{>0}$ such that
$$
\pi\left(1_{\{x>0\}} f \right) = \sum_{\sigma \in \Sigma} \int_0^{+\infty} f(x, \sigma) d\pi_\sigma(x)
$$
for all bounded measurable $f : E \rightarrow \mathbb R$. \\

Let $f_{-2}, f_0, f_2 \in C_c^{\infty}(\mathbb R_{>0})$ be arbitrary but fixed and let the function $f : E \rightarrow \mathbb R$ be defined by $f_\sigma(x, \sigma) = f_\sigma(x)$ if $x > 0$ and $f(x, \sigma) = 0$ when $x = 0$. We have $f \in D(\mathcal L)$ and using the explicit expression of $\mathcal L$ in Proposition~\ref{prop:feller+strong_markov+generator} yields
\begin{align*}
\int \mathcal L f d\pi &= \sum_{\sigma \in \Sigma} \left(\int_0^{+\infty} \left(\left(v \sigma -2c\right) \partial_x f_\sigma(x) + \sum_{\tilde \sigma \in \Sigma} q_{\sigma, \tilde \sigma} f_{\tilde \sigma}(x) \right) d\pi_\sigma(x)\right) \\
&= \sum_{\sigma \in \Sigma} \left[-(v \sigma -2c)\pi_\sigma' + \sum_{\tilde \sigma \in \Sigma} q_{\tilde \sigma, \sigma} \pi_\sigma\right] \left(f_\sigma\right)
\end{align*}
where $\pi_\sigma'$ is the distributional derivative of $\pi_\sigma$. By~\cite[Theorem 3.37]{liggett10} we have $\int \mathcal L f d\pi = 0$ so, because the $f_\sigma$ are arbitrary, we get
$$
- \mathcal V \Pi' + \mathcal Q^t \Pi = 0
$$
where $\mathcal V = v \text{Diag}\left(2, 0, -2\right) -2c I_3$, $\mathcal Q = (q_{\sigma, \sigma'})_{\sigma, \sigma' \in \Sigma}$ is the discrete generator of the Markov jump process followed by the $\sigma$-marginal and $\Pi = \left(\pi_2, \pi_0, \pi_{-2}\right)^t$. Because $\mathcal V$ is invertible, this is equivalent to $\Pi' = \mathcal A \Pi$ where $\mathcal A = \mathcal V^{-1} \mathcal Q^t$. The diagonalization
$$
\mathcal A
=
\left(\begin{array}{rrr}
1 & 1 & 1 \\
-\frac{2   v}{c} & \frac{2   {\left(c + v\right)}}{v - c} & 2 \\
-1 & \frac{(c+v)^2}{(v-c)^2} & 1
\end{array}\right)
\left(\begin{array}{rrr}
\frac{\omega}{c} & 0 & 0 \\
0 & -\frac{2 c \omega}{v^2 - c^{2}} & 0 \\
0 & 0 & 0
\end{array}\right)
\left(\begin{array}{rrr}
1 & 1 & 1 \\
-\frac{2   v}{c} & \frac{2   {\left(c + v\right)}}{v - c} & 2 \\
-1 & \frac{(c+v)^2}{(v-c)^2} & 1
\end{array}\right)^{-1}
$$
implies that the general (distributional) solution of $\Pi' = \mathcal A \Pi$ is
$$
\begin{pmatrix}
\pi_{-2} \\
\pi_{0} \\
\pi_{2}\\
\end{pmatrix}
=
a
\begin{pmatrix}
1 \\
\frac{2 {\left(c + v\right)}}{v - c} \\
\frac{(c+v)^2}{(v-c)^2}
\end{pmatrix}
e^{-\frac{2 c \omega}{v^{2} - c^{2}} x} dx
+ b
\begin{pmatrix}
1 \\
-\frac{2v}{c}\\
-1
\end{pmatrix}
e^{\frac{\omega}{c} x} dx
+c
\begin{pmatrix}
1 \\
2 \\
1
\end{pmatrix}
dx
$$
where $ a, b, c \in \mathbb{R}$ are constants that remain to be fixed. Because \( \pi_\sigma(\mathbb R_{>0}) < +\infty \), we know that $b = c = 0$ so that only $a$ remains to be fixed. \\

Because the $\sigma$-marginal of the process is a Markov jump process with generator $\mathcal Q$ we have
$$
\pi \left([0, +\infty) \times \{2\}\right) = \frac{1}{4}, \quad \pi \left([0, +\infty) \times \{0\}\right) = \frac{1}{2}, \quad \pi \left([0, +\infty) \times \{-2\}\right) = \frac{1}{4}.
$$

The measure $\pi$ is unique so it is ergodic and almost surely
$$
\lim_{t \rightarrow +\infty} \frac{1}{t} \int_0^t 1_{\{X(s) = (0, 2)\}}ds = \pi \left(\{ (0, 2) \}\right).
$$
Because the process instantaneously leaves the point $(0, 2)$ and cannot come back before a stochastic velocity change occurs, we have $\int_0^t {1}_{\{X(s) = (0, 2)\}} ds = 0$ for all $t \ge 0$ almost surely so that $\pi(\{(0, 2)\}) = 0$. \\ 
 
Setting $\zeta := -\frac{2c\omega}{v^2 - c^2}$ we get
$$
\frac{1}{4} = \pi \left([0, +\infty) \times \{2\}\right) = a \int_0^{+\infty} e^{\zeta x} dx = \frac{a}{-\zeta} \implies a = \frac{c \omega}{2 (v^2 - c^2)},
$$
and thus
$$
\pi_2 = \frac{c \omega}{2 (v^2 - c^2)} e^{\zeta x} dx, \quad \pi_0 = \frac{c\omega}{(c+v)^2}e^{\zeta x}dx, \quad \pi_{-2} = \frac{c(v-c)\omega}{2(c + v)^3} e^{\zeta x}dx.
$$

Finally
\begin{align*}
\pi \left( \{(0, 0)\}\right) &= \frac{1}{2} - \pi_0 \left( \mathbb R_{>0} \right) = \frac{c}{c+v}, \\
\pi \left( \{ (0, -2) \} \right) &= \frac{1}{4} - \pi_{-2} \left( \mathbb R_{>0} \right) = \frac{cv}{(c+v)^2}
\end{align*}
which concludes our explicit characterization of $\pi$.
\end{proof}

\subsection{Finite linear process}\label{subsec:finite_linear_invariant_measure}

Recall that when \( \sigma(t) \) follows~\ref{fig:finite_tumble_relative_particle_velocity_transition_rates} its transition-rate matrix is given by~\eqref{eq:finite_Q} and define
\[
	\mathcal V = \bordermatrix{ & \tilde \sigma = 2 & \tilde \sigma = 1 & \tilde \sigma = 0_\pm & \tilde \sigma = 0_0 & \tilde \sigma = -1 & \tilde \sigma = -2 \cr
\sigma = 2     & 2v  -2c & 0 & 0 & 0 & 0 & 0 \cr
\sigma = 1     & 0 & v  -2c & 0 & 0 & 0 & 0 \cr
\sigma = 0_\pm & 0 & 0 & -2c & 0 & 0 & 0 \cr
\sigma = 0_0   & 0 & 0 & 0 & -2c & 0 & 0 \cr
\sigma = -1    & 0 & 0 & 0 & 0 & -v  -2c & 0 \cr
\sigma = -2    & 0 & 0 & 0 & 0 & 0 & -2v  -2c
}.
\]

To find the invariant measure of the finite linear process, we will apply the same strategy as in the instantaneous case. As before, the key will be solving the linear ODE $-\mathcal V \Pi' + \mathcal Q^t \Pi = 0$ with the added difficulty of increased matrix size and non-invertibility of $\mathcal V$ when $v = 2c$.

The following Lemma~\ref{lem:roots_of_P2_and_P3} and Lemma~\ref{lem:eigenvector} are needed to write down the invariant measure of the finite linear process. They provide, in essence, the spectral decomposition of \mbox{\( \mathcal A = \mathcal V^{-1} \mathcal Q \)}, which plays the same central role as in the proof of~\ref{prop:instantaneous_linear_invariant_measure}. Their proofs are postponed until the end of the section.

We have
$$
\det(\mathcal V) \det(-X I + \mathcal A) = \det(-X \mathcal V + 	\mathcal Q^t) = -8 X P_2 P_3
$$
where the polynomials $P_2$ and $P_3$ are given by
\begin{align*}
P_2 &= c(v^2-c^2) X^2 + ((2\alpha+\beta)c^2 - \beta v^2) X - \alpha(\alpha+\beta)c, \\
P_3 &= c\left(8 c^2 - 2 v^{2} \right) X^3 + 2\left( {\alpha} v^{2} - {\left(8   {\alpha} + 4{\beta}\right)} c^{2} \right) X^2 + 2c \left(5   {\alpha}^{2} + 5   {\alpha} {\beta} + {\beta}^{2} \right) X - \left( 2   {\alpha}^{3} + 3   {\alpha}^{2} {\beta} + {\alpha} {\beta}^{2} \right).
\end{align*}
Their roots will play a crucial role in the computation of the invariant measure. 

\begin{Lem}\label{lem:roots_of_P2_and_P3} For \( v > c > 0 \) and \( \alpha, \beta > 0 \) it holds that
	\begin{itemize}
		\item[\normalfont(i)] $P_2$ is a quadratic polynomial with one positive and one negative root,
		\item[\normalfont(ii)] if $c < v < 2c$ then $P_3$ is a cubic polynomial with no negative roots,  
		\item[\normalfont(iii)] if $v = 2c$ then $P_3$ is a quadratic polynomial with no negative roots,
		\item[\normalfont(iv)] if $v > 2c$ then $P_3$ is a cubic polynomial with a single negative root,
		\item[\normalfont(v)] $P_3$ has only real roots,
		\item[\normalfont(vi)] $P_2$ and $P_3$ have distinct negative roots when $v > 2c$. 
	\end{itemize}
\end{Lem}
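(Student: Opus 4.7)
The plan is to address the six claims in three groups: (i) via Vieta's formulas, (ii)--(iv) via sign analysis depending on the degree of $P_3$, and (v)--(vi) via explicit algebraic computations. For (i), since $v > c > 0$, the leading coefficient $c(v^2 - c^2)$ of $P_2$ is strictly positive, so $P_2$ is a genuine quadratic; its constant term $-\alpha(\alpha+\beta)c$ is strictly negative, so Vieta gives a negative product of roots, forcing exactly one positive and one negative real root.

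For (ii)--(iv), I would split on the sign of the leading coefficient $2c(4c^2 - v^2)$ of $P_3$. When $c < v < 2c$ (part (ii)) it is strictly positive, so $P_3$ is cubic; examining the coefficients of $Y \mapsto P_3(-Y)$ shows that all four are non-positive (using $v^2 < 4c^2 < 8c^2$ to handle the $Y^2$ term) with strictly negative constant, hence $P_3(-Y) < 0$ for $Y \ge 0$ and $P_3$ has no negative roots. When $v = 2c$ (part (iii)) the leading coefficient vanishes but the next one equals $-8c^2(\alpha+\beta) \ne 0$, so $P_3$ degenerates to an honest quadratic; Vieta gives it positive sum and positive product of roots, hence no negative roots. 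When $v > 2c$ (part (iv)) the leading coefficient is negative and $P_3$ remains cubic; the coefficients of $Y \mapsto P_3(-Y)$ follow the sign pattern $(+, ?, -, -)$, which has exactly one sign change regardless of the middle entry, so Descartes' rule of signs gives at most one negative root of $P_3$, while $P_3(0) < 0$ combined with $P_3(-Y) \to +\infty$ as $Y \to +\infty$ and the intermediate value theorem furnish at least one.

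For (v), reality of all roots of $P_3$ cannot be concluded from (ii)--(iv) since a real cubic can always have one real and two complex conjugate roots. The natural structural path, namely symmetrizing the pencil $(\mathcal Q^t, \mathcal V)$ via the reversibility of $\mathcal Q$ with respect to its invariant distribution on $\Sigma$, reduces the problem to a symmetric-symmetric pencil $(S, \mathcal V)$; but $\mathcal V$ is of mixed signature whenever $v > c$, so no definite-pencil theorem applies and reality is not automatic. My plan is therefore a direct one: compute the cubic discriminant of $P_3$ as a polynomial in $(\alpha, \beta, c, v)$ and verify that it is non-negative on the parameter region $v > c > 0$, $\alpha, \beta > 0$, ideally by rewriting it as a sum of non-negative terms after factorization (most efficiently with a CAS). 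This is the main obstacle of the proof.

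For (vi), I would compute the resultant $\mathrm{Res}_X(P_2, P_3)$ as a polynomial in the model parameters and check that it does not vanish in the regime $v > 2c$, $\alpha, \beta > 0$. Equivalently, and more cheaply than expanding the full Sylvester determinant, the quadratic formula gives the unique negative root of $P_2$ from part (i) in closed form, so one can substitute it into $P_3$ and verify directly that the result is nonzero whenever $v > 2c$. Either approach reduces (vi) to an explicit finite algebraic check.
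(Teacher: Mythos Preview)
Your approach is essentially the paper's: sign/Vieta arguments for (i)--(iv), the cubic discriminant for (v), and the resultant for (vi). One genuine subtlety you are missing in (vi): the resultant $\mathrm{Res}_X(P_2,P_3)$ \emph{does} vanish on the parameter set, namely when $\alpha=\beta$, because in that case $P_2$ and $P_3$ share the \emph{positive} root $\alpha/c$. So the check ``the resultant does not vanish for $v>2c$'' fails as stated. The paper's fix is to observe that the resultant factors with an $(\alpha-\beta)$ factor and all other factors strictly positive; then for $\alpha=\beta$ one factors $P_2$ and $P_3$ explicitly, divides out the common factor $cX-\alpha$, and computes a second resultant of the quotients to see the remaining (negative) roots are distinct. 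Your alternative of substituting the closed-form negative root of $P_2$ into $P_3$ would avoid this case split, but be aware it produces a surd expression whose nonvanishing still has to be argued carefully. Everything else matches the paper.
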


\begin{Not}\label{not:zeta2_zeta3_piQ} The following notations will be useful
\begin{itemize}
\item denote $\zeta_2$ the unique negative root of $P_2$,
\item denote $\zeta_3$ the unique negative root of $P_3$ when $v > 2c$,
\item denote $\pi_{\mathcal Q}$ the invariant measure of the Markov jump process with transition-rate matrix $\mathcal Q$, which is given by
$$
\begin{pmatrix}
\pi_{\mathcal Q} \left(\{ 2 \}\right) \\
\pi_{\mathcal Q} \left(\{ 1 \}\right) \\
\pi_{\mathcal Q} \left(\{ 0_\pm \}\right) \\
\pi_{\mathcal Q} \left(\{ 0_0 \}\right) \\
\pi_{\mathcal Q} \left(\{ -1 \}\right) \\
\pi_{\mathcal Q} \left(\{ -2 \}\right)
\end{pmatrix} =
\frac{1}{(\alpha+\beta)^2}
\begin{pmatrix}
\frac{1}{4}{\beta}^2 \\
{\alpha}{\beta} \\
\frac{1}{2}{\beta}^2 \\
{\alpha}^2 \\
{\alpha}{\beta} \\
\frac{1}{4}{\beta}^2
\end{pmatrix}.
$$
\end{itemize}
\end{Not}

The next lemma gives explicit formulae for the eigenvectors corresponding to $\zeta_2$ and $\zeta_3$.

\begin{Lem} \label{lem:eigenvector} If $\zeta = \zeta_2$ (resp. $v > 2c$ and $\zeta = \zeta_3$) the kernel of $-\zeta \mathcal V + \mathcal Q^t$ is spanned by \( \mathrm a (\zeta) \) defined as
$$
	\begin{pmatrix}
	a_2(\zeta) \\
	a_1(\zeta) \\
	a_{0_\pm}(\zeta) \\
	a_{0_0}(\zeta) \\
	a_{-1}(\zeta) \\
	a_{-2}(\zeta)
	\end{pmatrix}
	=
	\begin{pmatrix}
	-{\left(3  c {\zeta} - 2  {\alpha} - {\beta}\right)} {\alpha}^{2} {\beta}^{2} \\
4  {\left(3  c {\zeta} - 2  {\alpha} - {\beta}\right)} {\left(c {\zeta} - v {\zeta} - {\alpha}\right)} {\alpha}^{2} {\beta} \\
-2  {\left(4  c^{2} {\zeta}^{2} - 6  c v {\zeta}^{2} + 2  v^{2} {\zeta}^{2} - 6  {\alpha} c {\zeta} - 2  {\beta} c {\zeta} + 4  {\alpha} v {\zeta} + 2  {\beta} v {\zeta} + 2  {\alpha}^{2} + {\alpha} {\beta}\right)} {\left(c {\zeta} - {\beta}\right)} {\alpha} {\beta} \\
-4  {\left(4  c^{2} {\zeta}^{2} - 6  c v {\zeta}^{2} + 2  v^{2} {\zeta}^{2} - 6  {\alpha} c {\zeta} - 2  {\beta} c {\zeta} + 4  {\alpha} v {\zeta} + 2  {\beta} v {\zeta} + 2  {\alpha}^{2} + {\alpha} {\beta}\right)} {\left(c {\zeta} - {\alpha}\right)} {\alpha}^{2} \\
	4 \rho(\zeta) {\left(3  c {\zeta} - 2  {\alpha} - {\beta}\right)} {\left(c {\zeta} + v {\zeta} - {\alpha}\right)} {\alpha}^{2} {\beta} \\
-\rho(\zeta){\left(3  c {\zeta} - 2  {\alpha} - {\beta}\right)} {\alpha}^{2} {\beta}^{2}
	\end{pmatrix}
	$$
	where
	$$
	\rho(\zeta) = \frac{4  c^{2} {\zeta}^{2} - 6  c v {\zeta}^{2} + 2  v^{2} {\zeta}^{2} - 6  {\alpha} c {\zeta} - 2  {\beta} c {\zeta} + 4  {\alpha} v {\zeta} + 2  {\beta} v {\zeta} + 2  {\alpha}^{2} + {\alpha} {\beta}}{4  c^{2} {\zeta}^{2} + 6  c v {\zeta}^{2} + 2  v^{2} {\zeta}^{2} - 6  {\alpha} c {\zeta} - 2  {\beta} c {\zeta} - 4  {\alpha} v {\zeta} - 2  {\beta} v {\zeta} + 2  {\alpha}^{2} + {\alpha} {\beta}}.
	$$
\end{Lem}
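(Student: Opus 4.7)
The plan is to check the lemma in three steps: identify the kernel dimension, verify a polynomial identity showing that $\mathrm a(\zeta)$ lies in the kernel, and conclude via non-triviality that it spans.

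First, I would establish that $\dim \ker(-\zeta\mathcal V + \mathcal Q^t) = 1$ when $\zeta = \zeta_2$, or when $\zeta = \zeta_3$ under the hypothesis $v > 2c$. The factorisation $\det(-X\mathcal V + \mathcal Q^t) = -8X\, P_2(X)\, P_3(X)$ recorded just above Lemma~\ref{lem:roots_of_P2_and_P3}, together with parts (ii), (iii) and (vi) of that lemma, shows that the corresponding eigenvalue is a simple root of the right-hand side: it is nonzero, a simple root of $P_2$ (resp.\ $P_3$), and not a root of the other polynomial. When $v \ne 2c$, the matrix $\mathcal V$ is invertible and this simplicity translates, via $\mathcal A = \mathcal V^{-1}\mathcal Q^t$, to one-dimensionality of the kernel. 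The singular case $v = 2c$, which concerns only $\zeta_2$, can be handled by a direct rank computation at these parameters.

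Next, I would verify the key polynomial identity
$$
(-\zeta\mathcal V + \mathcal Q^t)\,\mathrm a(\zeta) = 0 \quad \text{whenever } P_2(\zeta) = 0,
$$
and analogously whenever $P_3(\zeta) = 0$ in the case $v > 2c$. Concretely, each of the six entries of $(-\zeta\mathcal V + \mathcal Q^t)\mathrm a(\zeta)$ is a polynomial in $\zeta, \alpha, \beta, c, v$; the goal is to show that every such entry is divisible by $P_2(\zeta)$ in the first case and by $P_3(\zeta)$ in the second. Given the size of the matrices and the length of the components of $\mathrm a(\zeta)$, I would perform this symbolic reduction with a computer algebra system rather than by hand. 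This divisibility check is the main technical obstacle of the proof.

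Finally, I would verify that $\mathrm a(\zeta) \ne 0$ at the relevant roots. The first component is $-(3c\zeta-2\alpha-\beta)\alpha^2\beta^2$, which vanishes only at $\zeta = (2\alpha+\beta)/(3c) > 0$; since $\zeta_2, \zeta_3 < 0$, this component is non-zero. Combined with the dimension argument above, this shows that $\mathrm a(\zeta)$ spans the kernel. It remains to check that the denominator of $\rho(\zeta)$ does not vanish at $\zeta_2$ or $\zeta_3$: its $\zeta^2$-coefficient equals $2(2c+v)(c+v)$, its $\zeta$-coefficient $-2\bigl(c(3\alpha+\beta)+v(2\alpha+\beta)\bigr)$ is strictly negative, and its constant term $2\alpha^2 + \alpha\beta$ is strictly positive, so the denominator is bounded below by $2\alpha^2 + \alpha\beta$ on $\zeta \le 0$, which includes both negative roots.
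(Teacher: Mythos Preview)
Your proof is correct but follows a different route from the paper's. The paper does not verify the formula for $\mathrm a(\zeta)$ directly; instead it \emph{derives} it. Writing $\mathcal B = -\zeta\mathcal V + \mathcal Q^t$ and letting $\mathcal B_k$ be $\mathcal B$ with its $k$-th row replaced by $\mathrm e_k$, the paper shows that whenever $\mathcal B_k$ is invertible the vector $\mathrm u_k = (\mathrm{Adj}\,\mathcal B_k)\mathrm e_k$ spans $\ker\mathcal B$. It then uses resultant computations to certify $\det\mathcal B_1 \ne 0$ and $\det\mathcal B_6 \ne 0$ at $\zeta_2$ (and $\zeta_3$), computes the last four entries of $\mathrm u_1$ and the first four of $\mathrm u_6$, and patches the two together via the proportionality factor $\rho(\zeta)$. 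Non-vanishing of the denominator of $\rho$ is again handled by a resultant.

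Your approach---check the kernel has dimension one, plug $\mathrm a(\zeta)$ into $\mathcal B$ and verify divisibility by $P_2$ or $P_3$ via a CAS, then observe $a_2(\zeta)\ne 0$ for $\zeta<0$---is arguably more direct once the formula is given, and your sign argument for the denominator of $\rho$ is cleaner than the paper's resultant. The paper's adjugate construction, on the other hand, explains where the formula comes from and yields the one-dimensionality of the kernel as a by-product of the invertibility of $\mathcal B_1$, so no separate dimension argument (or special treatment of $v=2c$) is needed. Both methods lean on symbolic computation at roughly the same level.
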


We can now give explicit formulae for the invariant measure of the finite linear process.

\begin{Prop}[Invariant measure of the finite linear process]\label{prop:finite_linear_invariant_measure} \mbox{}
\begin{itemize}
\item[\normalfont(i)] {When $c < v \le 2c$}, the unique invariant measure of the finite linear process is given by
$$
\pi = \sum_{\sigma  \in \Sigma} \left( d_\sigma  \delta_0 + c_2 a_{\sigma }\left(\zeta_2\right) e^{\zeta_2 x} dx \right) \otimes \delta_\sigma
$$
where
$$
c_2 = \frac{-{\zeta_2}}{4  {\left(-3  c {\zeta_2} + 2  {\alpha} + {\beta}\right)} {\left({\alpha} + {\beta}\right)}^{2} {\alpha}^{2}}
$$
and
$$
d_\sigma = \left\{ \begin{array}{cl}
0 &\text{if } \sigma = 2, \\
{\pi_{\mathcal Q}}(\{\sigma\}) + \frac{c_2 a_\sigma(\zeta_2)}{\zeta_2} &\text{if } \sigma \ne 2.
\end{array}\right.
$$
\item[\normalfont(ii)] {When $v > 2c$}, the unique invariant measure of the finite linear process is given by
$$
\pi = \sum_{\sigma  \in \Sigma} \left( d_\sigma  \delta_0 + c_2 a_\sigma \left(\zeta_2\right) e^{\zeta_2 x} dx + c_3 a_\sigma \left(\zeta_3\right) e^{\zeta_3 x} dx \right) \otimes \delta_\sigma
$$
where
\begin{align*}
c_2 &= -\frac{{\zeta_2} {\zeta_3}}{4 \alpha^2 (\alpha+\beta)^2{\left(3  c {\zeta_2} - 2  {\alpha} - {\beta}\right)} {\left({\zeta_2} - {\zeta_3}\right)}}, \\
c_3 &= \frac{{\zeta_2} {\zeta_3}}{4 \alpha^2(\alpha+\beta)^2 {\left(3  c {\zeta_3} - 2  {\alpha} - {\beta}\right)} {\left({\zeta_2} - {\zeta_3}\right)}},
\end{align*}
and
$$
d_\sigma = \left\{
\begin{array}{cl}
0 &\text{if } \sigma = 1, 2, \\
{\pi_{\mathcal Q}}(\{\sigma\}) + \frac{c_2 a_\sigma(\zeta_2)}{\zeta_2} +\frac{c_3 a_\sigma(\zeta_3)}{\zeta_3} &\text{if } \sigma \ne 1, 2.
\end{array}
\right.
$$
\end{itemize}
\end{Prop}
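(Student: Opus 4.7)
The strategy is to follow the five-step recipe of Section~\ref{sec:instantaneous_linear_process}, already executed for the instantaneous linear process, adapting it to the larger state space $\Sigma$ and the possible non-invertibility of $\mathcal V$. First, applying $\int \mathcal L f\, d\pi = 0$ to test functions supported in the bulk $\mathbb R_{>0}\times\Sigma$ gives, exactly as in the proof of Proposition~\ref{prop:instantaneous_linear_invariant_measure}, the linear ODE system
$$
-\mathcal V \Pi' + \mathcal Q^t \Pi = 0
$$
for the vector $\Pi = (\pi_\sigma)_{\sigma \in \Sigma}$ of bulk densities. When $v \neq 2c$ this is equivalent to $\Pi' = \mathcal A \Pi$ with $\mathcal A = \mathcal V^{-1}\mathcal Q^t$, whose characteristic polynomial is a scalar multiple of $X P_2 P_3$. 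Combining Lemma~\ref{lem:roots_of_P2_and_P3} (which locates the negative roots) with Lemma~\ref{lem:eigenvector} (which supplies the corresponding eigenvectors) and the requirement that $\pi_\sigma$ be integrable on $\mathbb R_{>0}$ kills all modes associated with non-negative eigenvalues. Thus $\Pi = c_2\, \mathrm a(\zeta_2) e^{\zeta_2 x}$ when $c < v \le 2c$, and $\Pi = c_2\, \mathrm a(\zeta_2) e^{\zeta_2 x} + c_3\, \mathrm a(\zeta_3) e^{\zeta_3 x}$ when $v > 2c$.

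Second, the boundary case $v = 2c$ needs its own argument, since the $(1,1)$-entry of $\mathcal V$ vanishes: the $\sigma = 1$ row of $-\mathcal V \Pi' + \mathcal Q^t \Pi = 0$ becomes the purely algebraic relation $(\mathcal Q^t \Pi)_1 = 0$, which expresses $\pi_1$ in terms of the remaining $\pi_\sigma$'s and reduces the problem to a $5$-dimensional ODE system of the same nature as before, yielding the same form of solution. (Alternatively, one may recover $v=2c$ by continuity in the parameters.)

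Third, use ergodicity to kill the appropriate Dirac masses. For any $\sigma \in \Sigma$ with $-2V'(0) + v\sigma > 0$, the deterministic flow immediately pushes the process off the boundary and it cannot return to $(0,\sigma)$ before the next velocity jump, so $\int_0^t \mathbf 1_{\{X(s) = (0,\sigma)\}}\, ds = 0$ almost surely and the ergodic theorem yields $\pi(\{(0,\sigma)\}) = 0$. This gives $d_2 = 0$ in both regimes, and additionally $d_1 = 0$ when $v > 2c$ (since then $-2c + v > 0$). Combined with the identity $d_\sigma + \int_0^{+\infty}\Pi_\sigma(x)\,dx = \pi_{\mathcal Q}(\{\sigma\})$ (the $\sigma$-marginal of $\pi$ coincides with the invariant measure of $\mathcal Q$), this produces a single scalar equation for $c_2$ in the regime $c < v \le 2c$, and a $2 \times 2$ linear system for $(c_2, c_3)$ in the regime $v > 2c$. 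Solving them and reading off the remaining $d_\sigma$ from the marginal identity recovers the stated formulas.

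\textbf{Main obstacle.} The conceptual content is essentially the same as in Proposition~\ref{prop:instantaneous_linear_invariant_measure}; the difficulty is of an algebraic nature. Verifying that the announced closed-form values of $c_2$ and $c_3$ are indeed the solution of the $2\times 2$ system in the regime $v > 2c$ requires carrying the bulky expressions of $a_1(\zeta_i), a_2(\zeta_i)$ from Lemma~\ref{lem:eigenvector} through the computation and simplifying by repeatedly substituting $P_2(\zeta_2) = 0$ and $P_3(\zeta_3) = 0$. A secondary point is ensuring that the denominators appearing in $c_2, c_3$ do not vanish, i.e.\ that $3 c \zeta_i - 2\alpha - \beta \neq 0$ and $\zeta_2 \neq \zeta_3$; the latter is provided by Lemma~\ref{lem:roots_of_P2_and_P3}(vi), and the former follows from the sign of $\zeta_i$.
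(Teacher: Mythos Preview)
Your proposal is correct and follows essentially the same route as the paper's own proof: derive the bulk ODE $-\mathcal V\Pi' + \mathcal Q^t\Pi = 0$, use Lemmas~\ref{lem:roots_of_P2_and_P3} and~\ref{lem:eigenvector} together with integrability to isolate the $\zeta_2$ (and, for $v>2c$, $\zeta_3$) modes, kill the appropriate Dirac masses via the ergodicity argument, and fix the remaining constants from the $\sigma$-marginal constraints; the degenerate case $v=2c$ is handled in the paper exactly as you suggest, by using the algebraic row to reduce to a five-dimensional system whose characteristic polynomial is again proportional to $XP_2P_3$. The paper also makes explicit the nonvanishing of the $2\times 2$ determinant in the $v>2c$ case, which is the only point where your sketch is slightly less detailed.
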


Note that the invariant measure displays a rich behavior and differs when $c < v \le 2c$ and when $v > 2c$. Because $\pi\left(\mathbb R_+ \times \{\sigma\}\right) < +\infty$ only the negative eigenvalues of $\mathcal A = \mathcal V^{-1} \mathcal Q^t$ can enter the invariant measure. Hence, the different formulae can be explained by the fact that the characteristic polynomial of $\mathcal A$ equals $-\frac{8}{\det (\mathcal V)}XP_2P_3$ and
\begin{itemize}
\item when $c < v \le 2c$ the polynomial $P_2$ has a unique negative root while $P_3$ has no negative root,
\item when $v > 2c$ the polynomials $P_2$ and $P_3$ each have a single negative root,
\end{itemize}
as stated in Lemma~\ref{lem:roots_of_P2_and_P3}. The process also differs qualitatively. As outlined in the introduction 
\begin{itemize}
	\item when $c < v \le 2c$ the process stays `glued' to \( (0, 1) \) (until a velocity change occurs) whenever it visits the state,
	\item when $v > 2c$ the process immediately leaves the state \( (0, 1) \) whenever it passes through it.
\end{itemize}
As a result $\pi\left(\{(0, 1)\}\right) > 0$ when $c < v \le 2c$ and $\pi\left(\{(0, 1)\}\right) = 0$ when $v > 2c$.
\begin{proof}[Proof of Proposition~\ref{prop:finite_linear_invariant_measure}] Let $\pi$ be the unique invariant measure of the finite linear process and for $\sigma \in \Sigma$ let $(\pi_\sigma)_{\sigma \in \Sigma}$ be the Borel measures on $\mathbb R_{>0}$ such that
$$
\pi\left(1_{\{x>0\}} f \right) = \sum_{\sigma \in \Sigma} \int_0^{+\infty} f(x, \sigma) d\pi_\sigma(x)
$$
for all bounded measurable $f : E \rightarrow \mathbb R$. \\

The same argument as in the proof of Proposition~\ref{prop:instantaneous_linear_invariant_measure} yields
\begin{equation}\label{eq:bulk_ode}
-\mathcal V \Pi' + \mathcal Q^t \Pi = 0
\end{equation}
in the distributional sense, where $\Pi = (\pi_\sigma)_{\sigma \in \Sigma}$. \\

\underline{Case $v \ne 2c$.} In this case, $\mathcal V$ is invertible so~\eqref{eq:bulk_ode} is equivalent to $\Pi' = \mathcal A \Pi $ where $\mathcal A = \mathcal V^{-1} \mathcal Q^t$. Hence solving~\eqref{eq:bulk_ode} boils down to the spectral analysis of $\mathcal A$. \\

If we denote
\begin{itemize}
\item $K$ the number of Jordan blocks of $\mathcal A$,
\item $N_k$ the size of the $k$-th Jordan block,
\item $\lambda_k$ the eigenvalue associated to the $k$-th Jordan block,
\item $(\mathrm a_{k, n})_{n = 0}^{N_k-1}$ the \( k \)-th Jordan chain of generalized eigenvectors, i.e.
$$
\mathcal A \mathrm a_{k, 0} = \lambda_k \mathrm a_{k, 0} \text{ and } \mathcal A \mathrm a_{k, n} = \lambda_k \mathrm a_{k, n} + \mathrm a_{k, n-1} \text{ for } n \ge 1,
$$
\end{itemize}
then 
$$
e^{t \mathcal A} \left( \sum_{k = 1}^K \sum_{n = 0}^{N_k - 1} b_{k, n} \mathrm a_{k, n}  \right) = \sum_{k=1}^K e^{\lambda_k t} \sum_{n = 0}^{N_k - 1} P_k^{(n)}(t) \mathrm a_{k, n}
$$
where $P_k = \sum_{n = 0}^{N_k - 1} b_{k, n} \frac{X^n}{n!}$ and \( P^{(n)} \) is its \( n \)-th derivative. \\

We have
\begin{equation}\label{eq:char_poly}
\det(\mathcal A - X I) = -\frac{8}{\det(\mathcal V)} X P_2 P_3
\end{equation}
so Lemma~\ref{lem:roots_of_P2_and_P3} implies that all eigenvalues of $\mathcal A$ are real. Because the $\pi_\sigma$ are integrable we have $b_{k, n} = 0$ for all $k$ such that $\lambda_k \ge 0$ and all $n \in \{0, \ldots, N_k - 1\}$. Thus we need only consider the negative eigenvalues of $\mathcal A$ and the associated generalized eigenvectors. \\

\underline{Subcase $c < v < 2c$.} Lemma \ref{lem:roots_of_P2_and_P3} and~\eqref{eq:char_poly} imply that $\zeta_2$ is the only negative eigenvalue of $\mathcal A$. It has multiplicity one and
$$
\mathcal V \text{ invertible} \implies  \ker \left( \mathcal A - \zeta_2 I \right) = \ker \left(-\zeta_2 \mathcal V + \mathcal Q^t \right)
$$
so Lemma~\ref{lem:eigenvector} gives us the associated eigenvector $\mathrm a(\zeta_2)$. Hence $\Pi = c_2 \mathrm a(\zeta_2) e^{\zeta_2 x} dx$ where $c_2$ is a constant that remains to be determined. \\

Because the $\sigma$-marginal of the process is a Markov jump process with generator $\mathcal Q$ we have $\pi \left(\mathbb R_+ \times \{\sigma\}\right) = {\pi_{\mathcal Q}}(\{\sigma\})$ for $\sigma \in \Sigma$. The ergodicity argument in the proof of Proposition~\ref{prop:instantaneous_linear_invariant_measure} shows $d_2 = \pi(\{(0, 2)\}) = 0$ so
\begin{align*}
\pi(\mathbb R_{>0} \times \{2\}) = {\pi_{\mathcal Q}}(\{2\}) &\iff c_2 \frac{a_2(\zeta_2)}{-\zeta_2} = \frac{\beta^2}{4(\alpha + \beta)^2} \\
&\iff c_2 = \frac{-{\zeta_2}}{4  {\left(-3  c {\zeta_2} + 2  {\alpha} + {\beta}\right)} {\left({\alpha} + {\beta}\right)}^{2} {\alpha}^{2}}.
\end{align*}
For $\sigma \ne 2$ we have
$$
\pi(\{(0, \sigma)\}) + \pi(\mathbb R_{>0} \times \{\sigma\}) = {\pi_{\mathcal Q}}(\{\sigma\}) \iff d_\sigma = \pi(\{(0, \sigma)\}) = {\pi_{\mathcal Q}}(\{\sigma\}) + \frac{c_2 a_\sigma(\zeta_2)}{\zeta_2}.
$$

\underline{Subcase $v > 2c$.} Lemma~\ref{lem:roots_of_P2_and_P3} and~\eqref{eq:char_poly} imply that $\zeta_2$ and $\zeta_3$ are the only negative eigenvalues of $\mathcal A$. Both have multiplicity one and Lemma~\ref{lem:eigenvector} yields the associated eigenvectors. Therefore $\Pi = c_2 \mathrm a(\zeta_2)e^{\zeta_2 x} dx + c_3 \mathrm a (\zeta_3) e^{\zeta_3 x} dx$ where $c_2$ and $c_3$ are constants that remain to be fixed. \\

The usual ergodicity argument implies $d_2 = \pi(\{(0, 2)\}) = 0$ and $d_1 = \pi(\{(0, 1)\}) = 0$. Furthermore $\pi\left(\mathbb R_+ \times \{\sigma\}\right) = {\pi_{\mathcal Q}}(\{\sigma\})$ so
$$
\pi(\mathbb R_{>0} \times \{2\}) = {\pi_{\mathcal Q}}(\{2\}) \text{ and } \pi(\mathbb R_{>0}\times \{1\}) = {\pi_{\mathcal Q}}(\{1\})
$$
which can be rewritten as $M \begin{pmatrix}
c_2 \\ c_3 \end{pmatrix} = \begin{pmatrix}
{\pi_{\mathcal Q}}(\{2\}) \\ {\pi_{\mathcal Q}}(\{1\})
\end{pmatrix}$ with $M = \begin{pmatrix}
\frac{a_2(\zeta_2)}{-\zeta_2} & \frac{a_2(\zeta_3)}{-\zeta_3}\\
\frac{a_1(\zeta_2)}{-\zeta_2} & \frac{a_1(\zeta_3)}{-\zeta_3}
\end{pmatrix}$.

We have
$$
\det(M) = \frac{4  {\left(3  c {\zeta_2} - 2  {\alpha} - {\beta}\right)} {\left(3  c {\zeta_3} - 2  {\alpha} - {\beta}\right)} {\alpha}^{4} {\beta}^{3} {\left(c - v\right)} {\left({\zeta_2} - {\zeta_3}\right)}}{{\zeta_2} {\zeta_3}} \neq 0
$$
so we can solve the system and obtain
\begin{align*}
c_2 &= -\frac{{\zeta_2} {\zeta_3}}{4 \alpha^2 (\alpha+\beta)^2{\left(3  c {\zeta_2} - 2  {\alpha} - {\beta}\right)} {\left({\zeta_2} - {\zeta_3}\right)}}, \\
c_3 &= \frac{{\zeta_2} {\zeta_3}}{4 \alpha^2(\alpha+\beta)^2 {\left(3  c {\zeta_3} - 2  {\alpha} - {\beta}\right)} {\left({\zeta_2} - {\zeta_3}\right)}}.
\end{align*}

For $\sigma \ne 1, 2$ we have
$$
\pi(\{(0, \sigma)\}) + \pi(\mathbb R_{>0} \times \{\sigma\}) = {\pi_{\mathcal Q}}(\{\sigma\}) \iff d_\sigma = \pi\left(\{(0, \sigma)\}\right)= {\pi_{\mathcal Q}}(\{\sigma\}) + \frac{c_2 a_\sigma(\zeta_2)}{\zeta_2} + \frac{c_3 a_\sigma(\zeta_3)}{\zeta_3}.
$$

\underline{Case $v = 2c$.} In this case the matrix $\mathcal V$ is no longer invertible. It follows from Lemma~\ref{lem:roots_of_P2_and_P3} and Lemma~\ref{lem:eigenvector} that $\Pi = c_2 \mathrm a(\zeta_2) e^{\zeta_2 x} dx$ is a solution of~\eqref{eq:bulk_ode}. We now show that the solutions of~\eqref{eq:bulk_ode} that are integrable on $\mathbb R_{>0}$ form a one-dimensional vector space. \\

When $v = 2c$ equation~\eqref{eq:bulk_ode} reads
\begin{align*}
\frac{1}{2}  {\beta} {\pi_{1}(x)} - 2  {\alpha} {\pi_{2}(x)} - {\pi'_{2}(x)} v &= 0, \\
{\beta} {\pi_{0_0}(x)} + {\alpha} {\pi_{0_\pm}(x)} - {\left({\alpha} + {\beta}\right)} {\pi_{1}(x)} + 2  {\alpha} {\pi_{2}(x)} &= 0, \\
-2  {\alpha} {\pi_{0_\pm}(x)} + \frac{1}{2}  {\beta} {\pi_{1}(x)} + \frac{1}{2}  {\beta} {\pi_{-1}(x)} + {\pi'_{0_\pm}(x)} v &= 0, \\
-2  {\beta} {\pi_{0_0}(x)} + {\alpha} {\pi_{1}(x)} + {\alpha} {\pi_{-1}(x)} + {\pi'_{0_0}(x)} v &= 0, \\
{\beta} {\pi_{0_0}(x)} + {\alpha} {\pi_{0_\pm}(x)} - {\left({\alpha} + {\beta}\right)} {\pi_{-1}(x)} + 2  {\alpha} {\pi_{-2}(x)} + 2  {\pi'_{-1}(x)} v &= 0, \\
\frac{1}{2}  {\beta} {\pi_{-1}(x)} - 2  {\alpha} {\pi_{-2}(x)} + 3  {\pi'_{-2}(x)} v &= 0.
\end{align*}
Hence the second line can be used to remove $\pi_1$ from the system of equations and obtain a system of linear differential equations which can be solved. This is equivalent to the observation that~\eqref{eq:bulk_ode} implies $\Pi = M_{5 \rightarrow 6} M_{6 \rightarrow 5} \Pi$ and $(M_{6 \rightarrow 5} \Pi)' = \tilde{\mathcal A} (M_{6 \rightarrow 5} \Pi)$ where $\tilde {\mathcal A} = \left(M_{6 \rightarrow 5} \mathcal V M_{5 \rightarrow 6}\right)^{-1} (M_{6 \rightarrow 5} \mathcal Q^t M_{5 \rightarrow 6})$ and
$$
M_{5 \rightarrow 6} = \begin{pmatrix}
1 & 0 & 0 & 0 & 0 \\
\frac{2  {\alpha}}{{\alpha} + {\beta}} & \frac{{\alpha}}{{\alpha} + {\beta}} & \frac{{\beta}}{{\alpha} + {\beta}} & 0 & 0 \\
0 & 1 & 0 & 0 & 0 \\
0 & 0 & 1 & 0 & 0 \\
0 & 0 & 0 & 1 & 0 \\
0 & 0 & 0 & 0 & 1
\end{pmatrix}, \quad
M_{6 \rightarrow 5} = \begin{pmatrix}
1 & 0 & 0 & 0 & 0 & 0 \\
0 & 0 & 1 & 0 & 0 & 0 \\
0 & 0 & 0 & 1 & 0 & 0 \\
0 & 0 & 0 & 0 & 1 & 0 \\
0 & 0 & 0 & 0 & 0 & 1
\end{pmatrix}.
$$

Assume by contradiction that $\Pi_1$ and $\Pi_2$ are two linearly independent solutions of~\eqref{eq:bulk_ode} that are integrable on $\mathbb R_{>0}$. We have
$$
\sum_{i = 1, 2} a_i M_{6 \rightarrow 5} \Pi_i = 0 \implies \sum_{i = 1, 2} a_i M_{5 \rightarrow 6	} M_{6 \rightarrow 5} \Pi_i = 0 \implies \sum_{i=1, 2} a_i \Pi_i = 0 \implies a_1 = a_2 = 0
$$
so $\tilde \Pi_1 = M_{6 \rightarrow 5} \Pi_1$ and $\tilde \Pi_2 = M_{6 \rightarrow 5} \Pi_2$ are two linearly independent solutions of $\tilde \Pi' = \tilde {\mathcal A} \tilde \Pi$ that are integrable on $\mathbb R_{>0}$. \\

On the other hand the characteristic polynomial of $\tilde {\mathcal A}$ is given by $\frac{4  }{3  {\left({\alpha} + {\beta}\right)} v^{5}} X {P_2} {P_3}$ so Lemma~\ref{lem:roots_of_P2_and_P3} implies that $\tilde {\mathcal A}$ has a single negative eigenvalue. This is a contradiction. \\

Hence $\Pi = c_2 \mathrm a(\zeta_2) e^{\zeta_2 x} dx$ and we conclude using the same arguments as in the case \mbox{$c < v < 2c$}.
\end{proof}

We end this section by providing the postponed proofs of Lemma~\ref{lem:roots_of_P2_and_P3} and Lemma~\ref{lem:eigenvector}, which, in essence, describe the spectral decomposition of \( \mathcal A \).

\begin{proof}[Proof of Lemma~\ref{lem:roots_of_P2_and_P3}] (i) The leading coefficient and the constant term of $P_2$ have opposite signs. \\

	(ii) When $v \in (c, 2c)$ we have
	\begin{align*}
	\begin{array}{rl}
	2   {\left(2   c + v\right)} {\left(2   c - v\right)} c &> 0 \\
	-16   {\alpha} c^{2} - 8   {\beta} c^{2} + 2   {\alpha} v^{2} = -2 \left(2\sqrt{2} c + v\right) \left(2\sqrt{2}c - v\right)\alpha - 8 c^2 \beta &< 0  \\
	2   {\left(5   {\alpha}^{2} + 5   {\alpha} {\beta} + {\beta}^{2}\right)} c &>0 \\
	-{\left(2   {\alpha} + {\beta}\right)} {\left({\alpha} + {\beta}\right)} {\alpha} &<0
	\end{array}
	\end{align*}
	so $x < 0$ implies $P_3(x) < 0$. \\
	
	(iii) Follows from the same argument as (ii). \\

	(iv) When $v \in (2c, +\infty)$ we have
	\begin{align*}
	2   {\left(2   c + v\right)} {\left(2   c - v\right)} c < 0, \quad 2   {\left(5   {\alpha}^{2} + 5   {\alpha} {\beta} + {\beta}^{2}\right)} c >0, \quad -{\left(2   {\alpha} + {\beta}\right)} {\left({\alpha} + {\beta}\right)} {\alpha}<0,
	\end{align*}
	so by Descartes'~rule of signs $P_3$ has a unique negative root. \\
	
	(v) When $v \ne 2c$ the polynomial $P_3$ is a cubic with discriminant
	\begin{align*}
	\Delta = 128  {\alpha}^{3} {\beta}^{3} c^{6} + 64  {\alpha}^{2} {\beta}^{4} c^{6} + 64  {\alpha}^{6} {\left(c + v\right)}^{2} {\left(c - v\right)}^{2} v^{2} + 128  {\alpha}^{5} {\beta} c^{4} v^{2} + 288  {\alpha}^{2} {\beta}^{4} c^{4} v^{2} &\\
	+ 256  {\alpha} {\beta}^{5} c^{4} v^{2} + 64  {\beta}^{6} c^{4} v^{2} + 32  {\alpha}^{5} {\beta} c^{2} v^{4} + 404  {\alpha}^{4} {\beta}^{2} c^{2} v^{4} + 280  {\alpha}^{3} {\beta}^{3} c^{2} v^{4} &\\
	+ 52  {\alpha}^{2} {\beta}^{4} c^{2} v^{4} + 96  {\alpha}^{5} {\beta} v^{6} + 32  {\left(2  c^{2} + v^{2}\right)} {\alpha}^{4} {\beta}^{2} {\left(c + v\right)}^{2} {\left(c - v\right)}^{2}& > 0.
	\end{align*}

	When $v = 2c$ the polynomial $P_3$ has degree $2$ and discriminant
	$$
	\Delta = {\left(9  {\alpha}^{4} + 10  {\alpha}^{3} {\beta} + 3  {\alpha}^{2} {\beta}^{2} + 2  {\alpha} {\beta}^{3} + {\beta}^{4}\right)} v^{2} > 0.
	$$

	(vi) Assume by contradiction that the negative roots of $P_2$ and $P_3$ are identical. Then the following resultant must vanish
	\begin{align*}
			&\text{Res}_X \left(P_2, P_3\right) = \\
			&\qquad\left( \beta^2 \left( c^2 - v^2 \right)^2 \left(c^2 + v^2\right) + 4\alpha^2 c^4 v^2 + 4 \alpha \beta c^2 v^4 \right)
	\left( \alpha \left(c^2 - 4 v^2\right) + 2\beta \left( c^2 - v^2 \right) \right)
	{\left({\alpha} + {\beta}\right)} {\left({\alpha} - {\beta}\right)} {\alpha} c.
	\end{align*}
	
	All the factors in the previous expression are positive except for $\alpha - \beta$. Hence $\alpha = \beta$ and
	\begin{align*}
	P_2 &= \left({\left(c^{2} - v^{2}\right)} X - 2  {\alpha} c\right) \left(c X - \alpha\right), \\
	P_3 &= 2 \left({\left(4  c^{2} - v^{2}\right)} X^{2} - 8  X {\alpha} c + 3  {\alpha}^{2}\right) \left(c X - \alpha\right).
	\end{align*}
	
	Hence $P_2$ and $P_3$ share the positive root $\frac{\alpha}{c}$ and since
	$$
	\text{Res}_X \left(\frac{P_2}{cX - \alpha}, \frac{P_3}{cX - \alpha}\right) = 6 \alpha^2 \left(c^2 + v^2\right)^2 > 0
	$$
	there can be no shared negative root. This is a contradiction.
\end{proof}

\begin{proof}[Proof of Lemma~\ref{lem:eigenvector}] Set $\mathcal B = -\zeta \mathcal V + \mathcal Q^t$ and denote
	\begin{itemize}
		\item $\mathrm r_k$ the $k$-th row of $\mathcal B$ for $k = 1, \ldots, 6$,
		\item $\mathcal B_k$ the matrix obtained by replacing the $k$-th row of $\mathcal B$ by the $k$-th basis vector $\mathrm e_k$ for $k = 1, \ldots, 6$.
	\end{itemize}
	
	We now prove
	$$
	\mathcal B_k \text{ invertible } \implies \mathrm u_k := \left(\text{Adj}~\mathcal B_k\right) \mathrm e_k \text{ spans } \ker(\mathcal B) = \left( \text{Vect}(r_l)_l\right)^\perp.
	$$
	
	If $\mathcal B_k$ is invertible then
	$$
	{\mathrm u_k} = \left(\text{Adj}~\mathcal B_k\right) \mathrm e_k \implies \mathcal B_k {\mathrm u_k} = \det\left( \mathcal B_k \right) \mathrm e_k \implies \mathrm u_k \ne 0 \text{ and } \left\langle \mathrm r_l, \mathrm u_k \right\rangle = 0 \text{ for } l \ne k.
	$$
	Because $\mathcal B_k$ is invertible we have that the vectors $(\mathrm r_l)_{l \ne k}$ span a vector space of dimension $5$. Thus \( \ker(\mathcal B) \) is of dimension at most one. It remains to show \( \mathrm u_k \in \left( \text{Vect}(r_l)_l\right)^\perp \) i.e.~$\left\langle \mathrm r_k, {\mathrm u_k} \right\rangle = 0$.  Because \mbox{$\det(-X \mathcal V + \mathcal Q^t) = -8 X P_2 P_3$} Lemma~\ref{lem:roots_of_P2_and_P3} ensures that the family $(\mathrm r_l)_{l}$ spans a vector space of dimension at most $5$. We deduce $\mathrm r_k \in \text{Vect}(\mathrm r_l)_{l \ne k}$ and $\langle \mathrm r_k,\mathrm u_k \rangle = 0$. \\
	
	Working with the resultants
	$$
	\text{Res} \left( \det(\mathcal B_1), P_2 \right), \quad \text{Res} \left( \det(\mathcal B_1), P_3 \right), \quad \text{Res} \left( \det(\mathcal B_6), P_2 \right), \quad \text{Res} \left( \det(\mathcal B_6), P_3 \right),
	$$
	as in the proof of Lemma~\ref{lem:roots_of_P2_and_P3} (v) and using $\zeta = \zeta_2$ (resp. $v \in (2c, +\infty)$ and $\zeta = \zeta_3$) yields $\det(\mathcal B_1) \ne 0$ and $\det(\mathcal B_6) \ne 0$. Hence
	\begin{align*}
	\mathrm u_1 = \begin{pmatrix}
	* \\
	* \\
	2  {\left(4  c^{2} {\zeta}^{2} + 6  c v {\zeta}^{2} + 2  v^{2} {\zeta}^{2} - 6  {\alpha} c {\zeta} - 2  {\beta} c {\zeta} - 4  {\alpha} v {\zeta} - 2  {\beta} v {\zeta} + 2  {\alpha}^{2} + {\alpha} {\beta}\right)} {\left(c {\zeta} - {\beta}\right)} {\alpha} {\beta} \\
	4  {\left(4  c^{2} {\zeta}^{2} + 6  c v {\zeta}^{2} + 2  v^{2} {\zeta}^{2} - 6  {\alpha} c {\zeta} - 2  {\beta} c {\zeta} - 4  {\alpha} v {\zeta} - 2  {\beta} v {\zeta} + 2  {\alpha}^{2} + {\alpha} {\beta}\right)} {\left(c {\zeta} - {\alpha}\right)} {\alpha}^{2} \\
	-4  {\left(3  c {\zeta} - 2  {\alpha} - {\beta}\right)} {\left(c {\zeta} + v {\zeta} - {\alpha}\right)} {\alpha}^{2} {\beta} \\
	{\left(3  c {\zeta} - 2  {\alpha} - {\beta}\right)} {\alpha}^{2} {\beta}^{2}
	\end{pmatrix}, \\
	\mathrm u_6 =
	\begin{pmatrix}
	{\left(3  c {\zeta} - 2  {\alpha} - {\beta}\right)} {\alpha}^{2} {\beta}^{2} \\
	-4  {\left(3  c {\zeta} - 2  {\alpha} - {\beta}\right)} {\left(c {\zeta} - v {\zeta} - {\alpha}\right)} {\alpha}^{2} {\beta} \\
	2  {\left(4  c^{2} {\zeta}^{2} - 6  c v {\zeta}^{2} + 2  v^{2} {\zeta}^{2} - 6  {\alpha} c {\zeta} - 2  {\beta} c {\zeta} + 4  {\alpha} v {\zeta} + 2  {\beta} v {\zeta} + 2  {\alpha}^{2} + {\alpha} {\beta}\right)} {\left(c {\zeta} - {\beta}\right)} {\alpha} {\beta} \\
	4  {\left(4  c^{2} {\zeta}^{2} - 6  c v {\zeta}^{2} + 2  v^{2} {\zeta}^{2} - 6  {\alpha} c {\zeta} - 2  {\beta} c {\zeta} + 4  {\alpha} v {\zeta} + 2  {\beta} v {\zeta} + 2  {\alpha}^{2} + {\alpha} {\beta}\right)} {\left(c {\zeta} - {\alpha}\right)} {\alpha}^{2} \\
	* \\
	*
	\end{pmatrix},
	\end{align*}
	both span $\ker(\mathcal B)$ where we omit the entries marked by \( * \) for conciseness. Hence $\mathrm u_1$ and $\mathrm u_6$ are collinear eigenvectors corresponding to the eigenvalue $\zeta$ and the result follows from
	$$
	4  c^{2} {\zeta}^{2} + 6  c v {\zeta}^{2} + 2  v^{2} {\zeta}^{2} - 6  {\alpha} c {\zeta} - 2  {\beta} c {\zeta} - 4  {\alpha} v {\zeta} - 2  {\beta} v {\zeta} + 2  {\alpha}^{2} + {\alpha} {\beta} \ne 0
	$$
	which can again be shown by computing a resultant.
\end{proof}

\subsection{Instantaneous harmonic process}\label{sec:instantaneous_harmonic_invariant_measure}

We now turn to the invariant probability of the instantaneous harmonic process. The key step of writing down and solving Fokker-Planck in the bulk was already done in~\cite{basu20}. Hence, it suffices to determine the Dirac masses on the boundary induced by the jamming. A sketch of proof is included for the sake of completeness. An important feature of the instantaneous harmonic process is that once it enters the set $\left[0, v/\mu\right] \times \Sigma$, it stays there indefinitely. Hence the invariant measure has compact support. Furthermore, the density part of the invariant displays the same shape transition as in~\cite{basu20}. 

\begin{Not} The following special functions are needed to write down the invariant probability of the instantaneous harmonic process
\begin{itemize}
\item $_pF_q$ the hypergeometric function (see~\cite[Equation 16.2.1]{olver10}),
\item $_p \tilde F_q$ the regularized hypergeometric function given by
$$
{_p} \tilde F _q \left(a_1, \ldots, a_p; b_1, \ldots, b_q; z\right) := \frac{{_p} F _q \left(a_1, \ldots, a_p; b_1, \ldots, b_q; z\right)}{\Gamma(b_1) \cdots \Gamma(b_q)},
$$
	\item $K$ Legendre’s complete elliptic integral of the first kind (see~\cite[Equation 19.2.8]{olver10}),
	\item $G_{m,n}^{p,q}$ the Meijer $G$-function (see~\cite[Equation 16.17.1]{olver10}).
\end{itemize}
\end{Not}

\begin{Prop}[Invariant measure of the instantaneous harmonic process]\label{prop:instantaneous_harmonic_invariant_measure}
Let $\pi$ be unique the invariant measure of the instantaneous harmonic process and set \( b := \frac{\omega}{\mu} \).
\begin{itemize}
	\item[(i)] If $b \ne 1$, then the $x$-marginal of $\pi$ is
	$$
	d_0 \delta_0 + C p(x) dx,
	$$
	where
	\begin{align*}
		d_0 &= {2^{3 b+2} (b-1) \Gamma \left(\frac{b+3}{2}\right)} \times \\
		&\qquad\Bigg[8^b (b-1) \left(\pi ^{3/2} b (b+1) \sec \left(\frac{\pi  b}{2}\right) \, _3\tilde{F}_2\left(\frac{1}{2},\frac{1}{2}-b,1-\frac{b}{2};\frac{3}{2},\frac{1}{2}-\frac{b}{2};1\right)+4 \Gamma \left(\frac{b+3}{2}\right)\right)\\
		&\qquad\qquad-\sqrt{\pi } b (b+1) \Gamma \left(\frac{3}{2}-\frac{b}{2}\right) \Gamma (2 b+1) \, _3\tilde{F}_2\left(\frac{3}{2},1-\frac{b}{2},\frac{b+2}{2};\frac{b+3}{2},\frac{b+4}{2};1\right) \Bigg]^{-1},
	\end{align*}
	and
	\begin{align*}
		p(x) =  \, _2\tilde{F}_1\left(\frac{3}{2}-b,1-\frac{b}{2};\frac{3-b}{2};\frac{x^2 \mu ^2}{v^2}\right)-\frac{\Gamma \left(b-\frac{1}{2}\right) \left(\frac{\mu ^2 x^2}{v^2}\right)^{\frac{b-1}{2}} \, _2\tilde{F}_1\left(\frac{1}{2},1-\frac{b}{2};\frac{b+1}{2};\frac{x^2 \mu ^2}{v^2}\right)}{\sqrt{\pi }},
	\end{align*}
	and
	\begin{align*}
C = \frac{2 \sqrt{\pi } (1 - d_0) \frac{\mu}{v} }{\pi  \, _3\tilde{F}_2\left(\frac{1}{2},\frac{3}{2}-b,1-\frac{b}{2};\frac{3}{2},\frac{3}{2}-\frac{b}{2};1\right) - \Gamma \left(b-\frac{1}{2}\right) \Gamma \left(\frac{b}{2}\right) \, _3\tilde{F}_2\left(\frac{1}{2},1-\frac{b}{2},\frac{b}{2};\frac{b+1}{2},\frac{b+2}{2};1\right)}.
	\end{align*}	
	\item[(ii)] {If $b = 1$}, then the $x$-marginal of the invariant measure is
	$$
	\frac{8}{8+ \pi^2} \delta_0 + \frac{4 \mu}{v (8 + \pi^2)} K \left(1 - \frac{x^2 \mu^2}{v^2}\right) dx.
	$$
\end{itemize}
\end{Prop}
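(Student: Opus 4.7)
The strategy follows the template of the proof of Proposition~\ref{prop:instantaneous_linear_invariant_measure}; the new ingredient is that the bulk Fokker-Planck system has already been solved in~\cite{basu20}, so what remains is to pin down the support, the boundary atoms, and the overall normalization. First, decompose
$$
\pi = \sum_{\sigma \in \Sigma}\left(d_\sigma \delta_0 + \pi_\sigma\right)\otimes \delta_\sigma
$$
with $\pi_\sigma$ a Borel measure on $\mathbb R_{>0}$. Applying $\int \mathcal L f d\pi = 0$ to test functions supported in the bulk yields, in the distributional sense,
$$
-\partial_x\bigl((-2\mu x + v\sigma)\pi_\sigma\bigr) + \sum_{\tilde\sigma} q_{\tilde\sigma, \sigma}\pi_{\tilde\sigma} = 0,
$$
which is exactly the Fokker-Planck system solved in~\cite{basu20}. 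Up to an overall multiplicative constant, each $\pi_\sigma$ is therefore the hypergeometric density identified there; summing over $\sigma$ produces $p(x)$ as displayed.

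The next step is to establish that $\pi$ is supported in $[0, v/\mu]\times\Sigma$. In the $\sigma = 2$ sheet the drift $-2\mu x + 2v$ vanishes at $x = v/\mu$ and is negative beyond, while for $\sigma \in \{0, -2\}$ the drift is non-positive on $\mathbb R_+$; hence $[0, v/\mu]$ is positively invariant and globally attracting for the $x$-component, so by uniqueness of $\pi$ its support lies in this compact set. The same ergodicity argument as in the proof of Proposition~\ref{prop:instantaneous_linear_invariant_measure} forces $d_{+2} = 0$ since the drift at $(0, 2)$ equals $+2v > 0$ and the process leaves this point instantaneously, never returning before a velocity change. The $\sigma$-marginal of $\pi$ coincides with the invariant measure of the velocity chain, so $\pi(\mathbb R_+\times\{2\}) = 1/4$; combined with $d_{+2} = 0$ this reads $\int_0^{v/\mu}\pi_2(x)dx = 1/4$, which fixes the multiplicative constant of the bulk density. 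The total-mass identity $d_0 + C\int_0^{v/\mu}p(x)dx = 1$, where $d_0 = d_{(0,0)} + d_{(0,-2)}$, then pins down the $x$-marginal atom and the normalization $C$.

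The remaining work is to reduce the two integrals $\int_0^{v/\mu}\pi_2(x)dx$ and $\int_0^{v/\mu}p(x)dx$ to closed form. After Euler transformation and termwise integration, each becomes a ${}_3F_2$ series evaluated at unit argument, and standard Gauss-type summation identities deliver the explicit formulas for $d_0$ and $C$ stated in (i). The regime $b \ge 1$ is delicate because $p$ then diverges at $x = v/\mu$ and convergence of the integrals relies on precise exponent conditions; this is why the regularized ${}_p\tilde F_q$ rather than ${}_pF_q$ appears in the closed form. The case $b = 1$ is singular: the factor $b - 1$ in the denominator of the generic expression for $d_0$ reflects a resonance between the characteristic exponents of the bulk equation, so the $b \to 1$ limit must be handled separately. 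Redoing the bulk solution directly for $b = 1$ reduces the relevant integrals to Legendre's complete elliptic integral via $K(k^2) = \frac{\pi}{2}\,{}_2F_1(1/2, 1/2; 1; k^2)$, producing the constants $8/(8 + \pi^2)$ and $4\mu/(v(8+\pi^2))$ of (ii). I expect this hypergeometric bookkeeping to be the main technical obstacle.
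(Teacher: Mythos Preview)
Your proposal is correct and follows essentially the same route as the paper's sketch: cite~\cite{basu20} for the bulk solution, use the ergodicity argument to kill $d_{+2}$, and use the $\sigma$-marginal constraints $\pi(\mathbb R_+\times\{\sigma\}) = \pi_{\mathcal Q}(\{\sigma\})$ to fix the remaining constants. The only cosmetic difference is that the paper works in the variables $P = \pi_2+\pi_0+\pi_{-2}$, $Q = \pi_2+\pi_{-2}$, $R = \pi_2-\pi_{-2}$ (inherited from~\cite{basu20}, where $R = \tfrac{\mu x}{v}P$) and determines the three unknowns $C_1, C_2 = d_{(0,0)}, C_3 = d_{(0,-2)}$ from the three marginal equations separately, whereas you fix the bulk normalization from $\int \pi_2 = 1/4$ and then recover the aggregate atom $d_0 = C_2+C_3$ from the total-mass identity; these are linearly equivalent.
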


\begin{proof}[Sketch of proof]
	Following~\cite{basu20} we set $b = \frac{\omega}{\mu}$ as well as
	$$
	P = \pi_2 + \pi_0 + \pi_{-2}, \quad Q = \pi_2+ \pi_{-2}, \quad R = \pi_2 - \pi_{-2},
	$$
	where $\pi_2, \pi_0, \pi_{-2}$ are the measures on \( (0, +\infty) \) such that
	$$
	\pi\left(1_{\{x>0\}}f\right) = \sum_{\sigma \in \Sigma} \int_0^{+\infty} f(x, \sigma) d\pi_\sigma(x).
	$$
	
	It is shown in~\cite{basu20} that when $b \ne 1$
	\begin{align*}
		P &= C_1 \Bigg( {_2F_1}\left(\frac{3}{2}-b ,1-\frac{b }{2};\frac{3-b
	}{2};\frac{x^2 \mu ^2}{v^2}\right)+\\
&\quad \quad \frac{2 \Gamma \left(\frac{3-b
	}{2}\right) \Gamma \left(b +\frac{1}{2}\right) \left(\frac{\mu ^2
		x^2}{v^2}\right){}^{\frac{b -3}{2}+1}   {_2F_1}\left(\frac{b
		-3}{2}-b +\frac{5}{2},\frac{b -3}{2}-\frac{b }{2}+2;\frac{b
		-3}{2}+2;\frac{x^2 \mu ^2}{v^2}\right)}{\sqrt{\pi } (1-2 b ) \Gamma
	\left(\frac{b +1}{2}\right)} \Bigg) dx, \\
		Q &= C_1 \Bigg(\frac{(1-b ) {_2F_1}\left(\frac{1}{2}-b ,1-\frac{b
	}{2};\frac{1-b }{2};\frac{x^2 \mu ^2}{v^2}\right)}{1-2 b
}+\\
&\quad \quad \frac{2 \Gamma \left(\frac{3-b}{2}\right) \Gamma \left(b
	+\frac{1}{2}\right) \left(\frac{\mu ^2 x^2}{v^2}\right){}^{\frac{b
			-1}{2}+1}   {_2F_1}\left(\frac{b -1}{2}-b +\frac{3}{2},\frac{b
		-1}{2}-\frac{b }{2}+2;\frac{b -1}{2}+2;\frac{x^2 \mu
		^2}{v^2}\right)}{\sqrt{\pi } (1-2 b ) (b +1) \Gamma
	\left(\frac{b +1}{2}\right)}
\Bigg) dx,
	\end{align*}
	where $C_1$ is a constant that remains to be fixed. It is also shown that $R = \frac{\mu x}{v}P$ so that
	\begin{align*}
	\pi_2 = \frac{1}{2} \left( Q + \frac{\mu x}{v} P \right), \quad \pi_0 = P - Q, \quad \pi_{-2} = \frac{1}{2} \left( Q - \frac{\mu x}{v} P \right).
	\end{align*}
	
	Considering the $\sigma$-marginal yields
	$$
	\pi\left(\mathbb R_+ \times \{2\}\right) = \frac{1}{4}, \quad \pi\left(\mathbb R_+ \times \{0\}\right) = \frac{1}{2}, \quad \pi\left(\mathbb R_+ \times \{-2\}\right) = \frac{1}{4},
	$$
	and $\pi\left(\{(0, 2)\}\right) = 0$ follows from the usual ergodicity argument.
Denoting $C_2 = \pi\left(\{(0, 0)\}\right)$ and $C_3 = \pi \left( \{(0, -2)\} \right)$ this translates to
	\begin{equation}\label{eq:marginal_weights}
	\left.
	\begin{array}{rl}
	\frac{1}{2} \left( Q + \frac{\mu x}{v} P\right) \left(\mathbb R_{>0}\right) &= \frac{1}{4} \\
	C_2 + \left(P - Q\right)(\mathbb R_{>0}) &= \frac{1}{2} \\
	C_3 + \left( Q - \frac{\mu x}{v} P \right) (\mathbb R_{>0}) &= \frac{1}{4}
	\end{array}\right\}
	\end{equation}
	which allows us to determine $C_1, C_2$ and $C_3$. \\
	
	When $b = 1$, it is shown in~\cite{basu20} that 
	$$
	P = \frac{2 C_1}{\pi} K\left(1-\frac{x^2 \mu ^2}{v^2}\right) dx, \quad Q = C_1 G_{2,2}^{2,0}\left(\frac{\mu ^2 x^2}{v^2}\Big|
	\begin{array}{c}
	\frac{1}{2},\frac{3}{2} \\
	0,1 \\
	\end{array}
	\right) dx
	$$
	where $C_1$ is a constant that remains to be fixed. We can again use~\eqref{eq:marginal_weights} to compute the $C_i$ and deduce the desired result.
\end{proof}

\begin{Rem}
The identities
$$
\pi_2 = \frac{1}{2} \left( Q + \frac{\mu x}{v} P \right), \quad \pi_0 = P - Q, \quad \pi_{-2} = \frac{1}{2} \left( Q - \frac{\mu x}{v} P \right)
$$
fully determine $\pi$, not just its $x$-marginal. However, the formulae for the complete invariant probability are somewhat unwieldy and are thus omitted.
\end{Rem}

\section{Convergence towards the invariant measure}\label{sec:convergence}

Having found an explicit representation for the invariant probability of each process naturally leads to the study of the relaxation towards it. This will answer the crucial question: when does the asymptotic behavior take over? Because the processes in this article are non-reversible, the exponential decay rate provided by the spectral approach, which is the most prominent method in the statistical physics literature, is only valid asymptotically. Using coupling instead yields non-asymptotic upper bounds on the distance to the invariant measure, but is not guaranteed to capture the correct speed of convergence. The optimality of our results, in the form of converse bounds of the same order, then has be derived by a different method, which relies on identifying obstacles to mixing.

\subsection{Instantaneous and finite linear process} \label{subsec:convergence_rates_of_instantaneous_and_finite_linear process}

We start by establishing quantitative upper bounds for $\tv{\delta_{(x, \sigma)} P_t - \pi}$ using coupling and large deviation techniques that allow for a unified treatment of the instantaneous and finite linear process. In fact, our computations could be extended to relative velocity transition rates other than \ref{fig:instantaneous_tumble_relative_particle_velocity_transition_rates} and~\ref{fig:finite_tumble_relative_particle_velocity_transition_rates}.

The key is to couple the processes in the same `synchronous' manner as in~\cite{guillin24}, meaning that after some time the velocities of the two copies $X(t) = (x(t), \sigma(t))$ and $\tilde X(t) = (\tilde x(t), \tilde \sigma(t))$ are always identical. Once the velocities are the same, the order between $x$ and $\tilde x$ is preserved, which can be leveraged as follows. If, for example, $x(t) \le \tilde x(t)$ for all $t \ge T$ then $\tilde x(t_0) = 0$ for some $t_0 \ge T$ implies
$$
0 \le x(t_0) \le \tilde x(t_0) = 0 \implies x(t_0) = \tilde x(t_0).
$$
Hence it suffices to prove that $\mathbb P \left(\tau_0 > t\right)$ decays exponentially where $\tau_0 = \inf \{ t > 0 : x(t) = 0\}$. The main difference with the proof strategy in~\cite{guillin24} is the observation that
$$
\tau_0 > t \implies x(t) = x(0) - 2c t + v \int_0^t \sigma(s) ds > 0 \implies \frac{1}{t} \int_0^t \frac{\sigma(s)}{2} ds > \frac{c}{v} - \frac{x(0)}{2vt}
$$
so we can use large deviations results for additive functionals of Markov processes to obtain exponential decay. Furthermore, because the state space is not compact like in~\cite{guillin24}, the mixing time is infinite. Hence we turn to convergence speed under Dirac initial distributions instead.

\begin{Not}\label{not:ldp} Denote
\begin{itemize}
	\item $\mathcal Q$ the transition-rate matrix of figure \ref{fig:instantaneous_tumble_relative_particle_velocity_transition_rates} (resp. figure~\ref{fig:finite_tumble_relative_particle_velocity_transition_rates}),
	\item $\pi_{\mathcal Q}$ the associated invariant probability,
	\item $\tilde {\mathcal V} = \left(\frac12 \sigma {1}_{\{\sigma = \tilde \sigma\}}\right)_{\sigma, \tilde \sigma \in \Sigma }$ the matrix with the normalized velocities on the diagonal,
	\item $\mathcal S = \left( \pi_{\mathcal Q} \left( \sigma\right)1_{\{\sigma = \tilde \sigma \}} \right)_{\sigma, \tilde \sigma \in \Sigma}$ the matrix with the invariant measure on the diagonal,
	\item $\lambda_{\mathcal Q}$ the spectral gap of $\mathcal Q$,
	\item $\lambda_\pi := \lim_{L \rightarrow +\infty} -\frac{1}{L} \log \pi \left( \{ x > L\}\right)$ the exponential decay rate of the invariant measure.
\end{itemize}
\end{Not}

The quantitative aspect of our results comes from non-asymptotic large deviation bounds for additive functionals of Markov processes. Our main tool will be the following lemma which is an immediate consequence of~\cite[Theorem 1]{wu2000deviation}.

\begin{Lem} \label{lem:LDP_velocities} 
If $\sigma$ has transition rates \ref{fig:instantaneous_tumble_relative_particle_velocity_transition_rates} (resp.~\ref{fig:finite_tumble_relative_particle_velocity_transition_rates}) and initial distribution \( \mu \) then for all $R > 0$
$$
\mathbb P \left(\frac{1}{t} \int_0^t \frac{\sigma(s)}{2} ds \ge R \right) \le \left\Vert \frac{\mu}{\pi_{\mathcal Q}} \right\Vert_{L^2(\pi_{\mathcal Q})} e^{-t I(R)}
$$
where $I$ is Legendre transform of
$$
\Lambda(u) = \sup_{\left\langle \mathcal S f, f \right\rangle =1} \left\langle \mathcal S (\mathcal Q + u {\tilde {\mathcal V}}) f, f \right\rangle.
$$
\end{Lem}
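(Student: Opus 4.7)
The plan is to recognize the bound as a direct instance of the non-asymptotic deviation inequality of~\cite[Theorem~1]{wu2000deviation} for additive functionals of ergodic Markov processes, applied to the finite-state, bounded-velocity process at hand. Since $\Sigma$ is finite and $\sigma(t)$ is irreducible with unique invariant probability $\pi_{\mathcal Q}$, the hypotheses of Wu's theorem are trivially satisfied: the Markov semigroup acts on $L^2(\pi_{\mathcal Q})$, the test function $V(\sigma) := \sigma/2$ is bounded, and $\mu/\pi_{\mathcal Q}$ lies in $L^2(\pi_{\mathcal Q})$ because $\pi_{\mathcal Q}$ charges every state.

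Applying~\cite[Theorem~1]{wu2000deviation} directly with $V(\sigma) = \sigma/2$ yields
\[
\mathbb P_\mu \left( \frac{1}{t} \int_0^t V(\sigma(s))\, ds \ge R \right) \le \left\| \frac{d\mu}{d\pi_{\mathcal Q}} \right\|_{L^2(\pi_{\mathcal Q})} e^{-t I(R)},
\]
where $I$ is the Legendre transform of
\[
\tilde \Lambda(u) = \sup_{\|f\|_{L^2(\pi_{\mathcal Q})} = 1} \left\langle (\mathcal Q + uV) f, f \right\rangle_{L^2(\pi_{\mathcal Q})}.
\]
It remains to check that $\tilde \Lambda$ coincides with the matrix expression $\Lambda$ in the statement. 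This is a bookkeeping step: identifying a function $f : \Sigma \to \mathbb R$ with its coordinate vector in $\mathbb R^{|\Sigma|}$, the weighted inner product reads $\langle g, f \rangle_{L^2(\pi_{\mathcal Q})} = \langle \mathcal S g, f\rangle$, so $\|f\|_{L^2(\pi_{\mathcal Q})}^2 = \langle \mathcal S f, f\rangle$; moreover the multiplication operator by $V = \sigma/2$ corresponds to the diagonal matrix $\tilde{\mathcal V}$. Substituting these identifications gives $\tilde \Lambda = \Lambda$ and the claim follows.

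The only non-trivial checks are therefore verifying that Wu's hypotheses apply in this finite-dimensional setting (essentially trivial thanks to finiteness of $\Sigma$) and translating the abstract $L^2(\pi_{\mathcal Q})$-formalism into the concrete matrix formulation via $\mathcal S$ and $\tilde{\mathcal V}$. No new analytical ingredients are required beyond~\cite{wu2000deviation}.
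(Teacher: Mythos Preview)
Your proposal is correct and matches the paper's approach exactly: the paper states that the lemma ``is an immediate consequence of~\cite[Theorem~1]{wu2000deviation}'' and gives no further proof. Your sketch simply makes explicit the (trivial) verification of Wu's hypotheses in the finite-state setting and the bookkeeping that identifies the $L^2(\pi_{\mathcal Q})$ formulation with the matrix expression via $\mathcal S$ and $\tilde{\mathcal V}$.
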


We can now state the main result of this section.

\begin{Thm}[Geometric ergodicity] \label{thm:long_time_behavior_linear_process}
\begin{itemize}
	\item[(i)] The instantaneous (resp.~finite) linear process has a unique invariant measure~$\pi$.
	\item[(ii)] For all $\lambda \in \left(0, I\left(\frac{c}{v}\right)\right)$ and \( (x, \sigma) \in E \), one has
	\begin{align*}
		&\tv{\delta_{(x, \sigma)} P_t - \pi} \le \\
		&\qquad \left(e^{\frac{\lambda}{2 c} x} + \frac{\lambda}{I\left(\frac{c}{v}\right) - \lambda} \frac{e^{\left(\frac{I'\left(\frac{c}{v}\right)}{2v} - \frac{I\left(\frac{c}{v}\right) - \lambda}{2c}\right) x}}{\sqrt{\pi_{\mathcal Q}\left(\{\sigma\}\right)}} + 2 + \pi(f) + 2t \left( \max_{\sigma \in \Sigma} \mathcal Lf(0, \sigma)\right)\right) e^{-\frac{\lambda_{\mathcal Q} \lambda}{\lambda_{\mathcal Q} + \lambda}t}
	\end{align*}
	where $I'\left(\frac{c}{v}\right)$ is any subderivative, \( \mathcal L \) is the generator of the process and
	\[
	f(x, \sigma) = \mathbb E_{(x, \sigma)} \left[e^{\lambda \tau_0}\right] \text{ with } \tau_0 =  \inf\{ t > 0 : x(t) = 0 \}.
	\]
\end{itemize}
\end{Thm}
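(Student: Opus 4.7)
The proof follows the synchronous-coupling strategy sketched in Section~\ref{subsec:convergence_rates_of_instantaneous_and_finite_linear process} and combines it with the non-asymptotic large-deviation estimate of Lemma~\ref{lem:LDP_velocities}. The overall plan is to couple $X = (x,\sigma)$ started from $\delta_{(x,\sigma)}$ with a copy $\tilde X = (\tilde x,\tilde\sigma)$ started from $\pi$, show that the coalescence time $\tau$ is dominated by the sum of a velocity-coupling time $\tau_\sigma$ and a boundary hitting time $\tau_0$, then bound the Laplace transform of $\tau_0$ via Lemma~\ref{lem:LDP_velocities}, and finally balance the two rates.

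I would first couple the $\Sigma$-valued components $\sigma$ and $\tilde\sigma$ by letting them evolve independently until they first agree and then jump together, so the coupling time $\tau_\sigma$ satisfies $\mathbb{P}(\tau_\sigma > s) \le \|\mu_\sigma/\pi_{\mathcal{Q}}\|_{L^2(\pi_{\mathcal{Q}})}\, e^{-\lambda_{\mathcal{Q}} s}$ by the spectral gap of $\mathcal{Q}$. For $t \ge \tau_\sigma$ both positions are driven by the same velocity, and the ODE $\partial_t y = v\sigma(t) - 2c\,\mathrm{sgn}(y)$ together with the jamming reflection at $0$ is order-preserving, so the relative order of $x$ and $\tilde x$ is frozen. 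On the event $\{x(\tau_\sigma) \le \tilde x(\tau_\sigma)\}$ (the opposite being symmetric), the instant $\tilde x$ hits $0$ forces the smaller coordinate $x \in [0,\tilde x]$ to be $0$ as well, and the two processes then coalesce. Hence $\tv{\delta_{(x,\sigma)}P_t - \pi} \le \mathbb{P}(\tau > t)$ with $\tau \le \tau_\sigma + \tau_0$, where $\tau_0 := \inf\{t \ge 0 : \tilde x(\tau_\sigma + t) = 0\}$.

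To control $\tau_0$, note that on $\{\tau_0 > t\}$ one has $\tilde x(\tau_\sigma) - 2ct + v\int_0^t \sigma(\tau_\sigma+s)\,ds > 0$, so $\frac{1}{t}\int_0^t \frac{\sigma(\tau_\sigma+s)}{2}\,ds > \frac{c}{v} - \frac{\tilde x(\tau_\sigma)}{2vt}$. Lemma~\ref{lem:LDP_velocities} conditional on $\mathcal{F}_{\tau_\sigma}$ then yields
\[
\mathbb{P}\bigl(\tau_0 > t \mid \mathcal{F}_{\tau_\sigma}\bigr) \le \frac{1}{\sqrt{\pi_{\mathcal{Q}}(\{\sigma(\tau_\sigma)\})}}\, \exp\Bigl(-t\, I\bigl(\tfrac{c}{v} - \tfrac{\tilde x(\tau_\sigma)}{2vt}\bigr)\Bigr),
\]
and the convex subgradient inequality $I(c/v - \epsilon) \ge I(c/v) - I'(c/v)\epsilon$ (valid for any subderivative) combined with the deterministic lower bound $\tau_0 \ge \tilde x(\tau_\sigma)/(2c)$ — since the inward speed is at most $2c$ — produces, by integration against $\lambda e^{\lambda s}\,ds$ for $\lambda \in (0, I(c/v))$, the exponential moment
\[
f(\tilde x(\tau_\sigma),\sigma(\tau_\sigma)) = \mathbb{E}\bigl[e^{\lambda \tau_0} \mid \mathcal{F}_{\tau_\sigma}\bigr] \le e^{\lambda \tilde x(\tau_\sigma)/(2c)} + \frac{\lambda}{I(c/v)-\lambda}\cdot \frac{\exp\bigl((I'(c/v)/(2v) - (I(c/v)-\lambda)/(2c))\,\tilde x(\tau_\sigma)\bigr)}{\sqrt{\pi_{\mathcal{Q}}(\{\sigma(\tau_\sigma)\})}}.
\]

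Combining the two pieces via $\mathbb{P}(\tau > t) \le \mathbb{P}(\tau_\sigma > s) + e^{-\lambda(t-s)}\mathbb{E}[e^{\lambda\tau_0}]$ and optimizing the split $s$ so that $\lambda_{\mathcal{Q}} s = \lambda(t-s)$ produces the harmonic-mean rate $\lambda_{\mathcal{Q}}\lambda/(\lambda_{\mathcal{Q}}+\lambda)$ and the first two prefactor terms of the statement, evaluated at the initial point $(x,\sigma)$. The remaining summand $\pi(f) + 2t\max_\sigma \mathcal{L}f(0,\sigma)$ arises when estimating $\mathbb{E}^\pi f(\tilde X(\tau_\sigma))$: by Dynkin applied through the extended generator of Proposition~\ref{prop:feller+strong_markov+generator}, $\mathbb{E}^\pi f(\tilde X(\tau_\sigma)) \le \pi(f) + \mathbb{E}^\pi \int_0^{\tau_\sigma}\mathcal{L}f(\tilde X(s))\,ds$, and since $f$ is a Laplace transform of a hitting time, $\mathcal{L}f = 0$ on the bulk $\{x > 0\}$ so only the time spent at the boundary contributes, bounded crudely by $t\max_\sigma\mathcal{L}f(0,\sigma)$. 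Part~(i) follows as a byproduct: uniqueness is immediate from the quantitative bound, and existence is obtained through Krylov-Bogolyubov using the Lyapunov function $V(x,\sigma) = e^{\eta x}$ with $\eta > 0$ small enough that $\mathcal{L}V \le -\varepsilon V + K$. The main obstacle is the delicate large-deviation bookkeeping in the bound on $\mathbb{E}[e^{\lambda \tau_0}]$: the rate function $I$ is not a priori smooth at $c/v$, so one has to argue via a subderivative while simultaneously tracking the sharp $L^2(\pi_{\mathcal{Q}})$ prefactor so that the dependence on the initial velocity surfaces as $1/\sqrt{\pi_{\mathcal{Q}}(\{\sigma\})}$; a secondary nuisance is that $\tilde x$ may dwell at $0$, forcing Dynkin's identity to be used in the extended sense, which is what surfaces in the $2t \max_\sigma \mathcal{L}f(0,\sigma)$ term.
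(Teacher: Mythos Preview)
Your strategy is the same as the paper's --- synchronous velocity coupling, order preservation, and the large-deviation bound of Lemma~\ref{lem:LDP_velocities} on the boundary hitting time --- and would lead to the stated bound. The one structural difference is that you organise the argument around the \emph{random} coupling time $\tau_\sigma$ and the strong Markov property, whereas the paper fixes a \emph{deterministic} split at $\theta t=\tfrac{\lambda}{\lambda_{\mathcal Q}+\lambda}t$, bounds
\[
\tv{\delta_{(x,\sigma)}P_t-\pi}\le \mathbb P\bigl(\exists s\ge\theta t:\sigma(s)\ne\tilde\sigma(s)\bigr)+\mathbb P(\tau_0^{\theta t}>t)+\mathbb P(\tilde\tau_0^{\theta t}>t),
\]
and handles the last two terms via the ordinary Markov property and $P_{\theta t}f\le f+Ct$ with $C=\max_\sigma\mathcal L f(0,\sigma)$. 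Both decompositions give the same harmonic-mean rate; the deterministic split is slightly cleaner because it avoids evaluating $f$ at a random time.

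Two points in your sketch need tightening. First, the bound $\mathbb P(\tau_\sigma>s)\le\|\mu_\sigma/\pi_{\mathcal Q}\|_{L^2}e^{-\lambda_{\mathcal Q}s}$ does not follow from ``evolve independently until they agree'': that is an $L^2$/TV statement, not a coupling-time estimate for the independent coupling. The paper instead builds an explicit coupling at the single-particle level (Lemma~\ref{lem:linear_synchronous_coupling}) and reads off $\mathbb P(\tau_\sigma>s)\le 2e^{-\lambda_{\mathcal Q}s}$ directly, which is where the ``$+2$'' in the prefactor comes from. Second, the Dynkin correction is needed for \emph{both} copies: in your decomposition the Laplace-transform bound is on $f(X(\tau_\sigma))$ and $f(\tilde X(\tau_\sigma))$, and passing back to $f(x,\sigma)$ and $\pi(f)$ each costs a $+tC$; this is why the factor is $2t\max_\sigma\mathcal Lf(0,\sigma)$, not $t$. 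For part~(i) the paper also uses a Foster--Lyapunov argument, but with $f(x,\sigma)=\mathbb E_{(x,\sigma)}[e^{\lambda\tau_0}]$ itself (bounded in Lemma~\ref{lem:lyapunov_function}) as the Lyapunov function feeding into \cite[Theorem~4.1]{meyn93}, rather than $e^{\eta x}$.
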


\begin{Rem}
In the instantaneous linear case $I\left(R\right) = 2\omega\left(1 - \sqrt{1 - R^2}\right)$ so $I'\left(\frac{c}{v}\right)$ exists in the usual sense. We state Theorem~\ref{thm:long_time_behavior_linear_process} in terms of subderivatives of the convex function $I$ to emphasize the fact that it is not necessary to assume that it is differentiable, which might not be the case if we considered transition rates other than \ref{fig:instantaneous_tumble_relative_particle_velocity_transition_rates} and \ref{fig:finite_tumble_relative_particle_velocity_transition_rates}.
\end{Rem}

\begin{Rem} 
The prefactor in Theorem~\ref{thm:long_time_behavior_linear_process} (ii) has an exponential $x$-dependence. We would expect the same $x$-dependence if we turned to Foster-Lyapunov techniques~\cite{meyn93,hairer11}.
\end{Rem}

The following lemma, the proof of which is delayed until the end of the section, makes the exponential decay rates explicit in terms of model parameters.

\begin{Prop}[Decay rates as a function of model parameters]\label{prop:explicit_decay_rates} \mbox{}
	\begin{itemize}
		\item[(i)] For the instantaneous linear process $\lambda_{\mathcal Q} = 2\omega$ and $I\left(R\right) = 2\omega \left(1 - \sqrt{1 - R^2}\right)$.
		\item[(ii)] For the finite linear process $\lambda_{\mathcal Q} = \alpha$ and
		$$
		I\left(R\right) \ge \frac{3 - \sqrt{5}}{4} \min\left( \alpha R, \alpha \left(1 + \frac{\alpha}{\beta}\right) R^2 \right).
		$$
	\end{itemize}
\end{Prop}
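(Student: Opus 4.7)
Part (i) is a direct computation. Since $\mathcal Q$ is reversible with respect to $\pi_{\mathcal Q}=(1/4,1/2,1/4)$, the symmetrized matrix $\mathcal S^{1/2}(\mathcal Q+u\tilde{\mathcal V})\mathcal S^{-1/2}$ is real symmetric; expanding its characteristic polynomial factors it as $-(2\omega+\lambda)[(2\omega+\lambda)^2-u^2-4\omega^2]$. Setting $u=0$ exposes the spectrum $\{0,-2\omega,-4\omega\}$ of $\mathcal Q$, so $\lambda_{\mathcal Q}=2\omega$; the largest root for general $u$ is $\Lambda(u)=-2\omega+\sqrt{u^2+4\omega^2}$. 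A straightforward Legendre computation with critical point $u^\ast=2\omega R/\sqrt{1-R^2}$ then yields $I(R)=2\omega(1-\sqrt{1-R^2})$.

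For part (ii) the value $\lambda_{\mathcal Q}=\alpha$ is obtained by exploiting the $\sigma\mapsto -\sigma$ symmetry. Under the flip involution $\mathcal Q$ is invariant while $\tilde{\mathcal V}$ is odd, so $\mathcal Q$ splits into a $4\times 4$ block $\mathcal Q_s$ acting on even vectors $(a,b,c,d,b,a)^T$ and a $2\times 2$ block $\mathcal Q_a=\bigl(\begin{smallmatrix}-2\alpha & 2\alpha \\ \beta/2 & -(\alpha+\beta)\end{smallmatrix}\bigr)$ acting on odd vectors $(a,b,0,0,-b,-a)^T$. The spectrum of $\mathcal Q_a$ is $\{-\alpha,-(2\alpha+\beta)\}$ by direct inspection. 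For $\mathcal Q_s$ one notices that $0$ and $-2\alpha$ are eigenvalues (the latter from a rank-defect at $\mathcal Q_s+2\alpha I$, with eigenvector $(1,0,-1,0)^T$); the trace and the sum of principal $3\times 3$ minors then pin down the remaining two eigenvalues as $-(\alpha+\beta)$ and $-2(\alpha+\beta)$. Among the six nonzero magnitudes $\alpha$ is the smallest.

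To bound $I(R)$ I would apply the variational formula
\[
\Lambda(u)=\sup_{\|f\|_{L^2(\pi_{\mathcal Q})}=1}\bigl[-\mathcal E(f,f)+u\,\pi_{\mathcal Q}(Vf^2)\bigr]
\]
with $V(\sigma)=\sigma/2$ (so $\pi_{\mathcal Q}(V)=0$) and $\mathcal E$ the Dirichlet form of $\mathcal Q$. Decomposing $f=a+g$ with $a=\pi_{\mathcal Q}(f)$ and $\pi_{\mathcal Q}(g)=0$, Poincaré gives $\mathcal E(f,f)\ge\alpha\|g\|^2$, while Cauchy--Schwarz gives $|\pi_{\mathcal Q}(Vg)|\le\sigma\|g\|$ and $|\pi_{\mathcal Q}(Vg^2)|\le\|g\|^2$ with $\sigma^2:=\mathrm{Var}_{\pi_{\mathcal Q}}(V)=\beta/(2(\alpha+\beta))$. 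With $s:=\|g\|$ and $a^2+s^2=1$, the supremum reduces to the largest eigenvalue of $\bigl(\begin{smallmatrix}u-\alpha & u\sigma\\ u\sigma & 0\end{smallmatrix}\bigr)$, namely $\Phi(u):=\tfrac12\bigl[(u-\alpha)+\sqrt{(u-\alpha)^2+4u^2\sigma^2}\bigr]$. An elementary squaring manipulation shows $\Phi(u)\le u^2\sigma^2/(\alpha-u)$ on $[0,\alpha)$, and the Legendre transform of this cleaner upper bound is explicit: the stationarity condition (after setting $u=\alpha t$) reads $R(1-t)^2=\sigma^2 t(2-t)$, solved by $t^\ast=1-\sigma/\sqrt{R+\sigma^2}$, which gives $I(R)\ge \alpha(\sqrt{R+\sigma^2}-\sigma)^2$.

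It remains to compare $\alpha(\sqrt{R+\sigma^2}-\sigma)^2$ with $\tfrac{3-\sqrt 5}{4}\min(\alpha R,\alpha(1+\alpha/\beta)R^2)$. Introducing $x:=\sqrt{R+\sigma^2}/\sigma\ge 1$, one finds $\alpha(\sqrt{R+\sigma^2}-\sigma)^2/R=\alpha(x-1)/(x+1)$ and $\alpha(\sqrt{R+\sigma^2}-\sigma)^2/R^2=\alpha/[\sigma^2(x+1)^2]$. Using the identity $(\sqrt 5+1)^2=6+2\sqrt 5$, the linear inequality reduces to $x\ge 2\sqrt 5-3$ and the quadratic one to $x\le\sqrt 5$; since $1<2\sqrt 5-3<\sqrt 5$, the two $R$-ranges overlap and together cover $[0,+\infty)$, yielding the claim. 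The main obstacle is the accumulation of slack in the chain $\Lambda\le\Phi\le u^2\sigma^2/(\alpha-u)$: each inequality sheds a constant factor that propagates through the Legendre inversion, and $\tfrac{3-\sqrt 5}{4}$ is the constant that survives the final algebraic comparison.
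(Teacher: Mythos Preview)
Part (i) is correct and essentially identical to the paper's argument.

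For part (ii) your argument is correct but follows a genuinely different route from the paper. The paper does not work from the variational formula at all: it simply invokes Lezaud's concentration inequality \cite[Theorem~1.1 and Remark~1.2]{lezaud01} to obtain
\[
I(R)\;\ge\;\alpha R\,g\bigl((1+\tfrac{\alpha}{\beta})R\bigr),\qquad g(X)=\frac{2X}{(1+\sqrt{1+4X})^2},
\]
and then extracts the constant by the monotonicity observation $\inf_{X\ge 1}g(X)=\inf_{X\le 1}g(X)/X=g(1)=\tfrac{3-\sqrt5}{4}$. You instead rebuild the bound from scratch: reversibility of $\mathcal Q$ turns the variational formula into a Dirichlet-form expression, Poincar\'e and Cauchy--Schwarz collapse the infinite-dimensional optimization to a $2\times 2$ eigenvalue problem, the further domination $\Phi(u)\le u^2\sigma^2/(\alpha-u)$ has an explicit Legendre transform, and you land on the intermediate bound $I(R)\ge\alpha(\sqrt{R+\sigma^2}-\sigma)^2$. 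This is not the same function as Lezaud's---with $Y=(1+\alpha/\beta)R$ the two read $\alpha\sigma^2(\sqrt{1+2Y}-1)^2$ versus $\tfrac14\alpha\sigma^2(\sqrt{1+4Y}-1)^2$, and yours is in fact sharper for large $Y$---yet both happen to yield exactly $\tfrac{3-\sqrt5}{4}$ after the final comparison with $\min(\alpha R,\alpha(1+\alpha/\beta)R^2)$. Your approach is more self-contained and makes the mechanism (spectral gap plus variance of the velocity) explicit; the paper's is shorter because it outsources that mechanism to the literature. Your spectral-gap computation via the $\sigma\mapsto -\sigma$ decomposition is also a nice alternative to the paper's bare listing of eigenvalues.
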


Combining Theorem~\ref{thm:long_time_behavior_linear_process} and Proposition~\ref{prop:explicit_decay_rates} and using the inequality $\frac{a b}{a + b} \ge \frac{1}{2} \min(a, b)$ for $a, b > 0$ yields
$$
\varliminf -\frac{1}{t} \log \tv{\delta_{(x, \sigma)} P_t - \pi} \ge \omega \left(1 - \sqrt{1 - \frac{c^2}{v^2}}\right) \ge \frac{\omega}{2} \frac{c^2}{v^2}
$$
in the instantaneous case and
$$
\varliminf -\frac{1}{t} \log \tv{\delta_{(x, \sigma)} P_t - \pi} \ge \frac{3 - \sqrt{5}}{8} \min \left( \alpha \frac{c}{v}, \alpha \left(1 + \frac{\alpha}{\beta}\right) \frac{c^2}{v^2} \right)
$$
in the finite case, which are the lower bounds from Theorem~\ref{thm:convergence_of_all_processes}. The rest of the section is dedicated to proving Theorem~\ref{thm:long_time_behavior_linear_process} and Proposition~\ref{prop:explicit_decay_rates}. We start by deducing quantitative bounds for $\mathbb E_{(x, \sigma)} \left[e^{\lambda \tau_0}\right]$ from Lemma~\ref{lem:LDP_velocities}.

\begin{Lem}[Lyapunov function] \label{lem:lyapunov_function} For all $\lambda \in \left( 0, I \left( \frac{c}{v} \right) \right)$ we have
$$
\mathbb E_{(x, \sigma)} \left[e^{\lambda {\tau_0}}\right] \le e^{\frac{\lambda}{2 c} x} + \frac{\lambda}{I\left(\frac{c}{v}\right) - \lambda} \frac{e^{\left(\frac{I'\left(\frac{c}{v}\right)}{2v} - \frac{I\left(\frac{c}{v}\right) - \lambda}{2c}\right) x}}{\sqrt{\pi_{\mathcal Q}\left(\{\sigma\}\right)}}
$$
where ${\tau_0} = \inf \{t > 0: x(t) = 0\}$ and $I'\left( \frac{c}{v} \right)$ is any subderivative.
\end{Lem}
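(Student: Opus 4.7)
The plan is to bound the Laplace transform of \( \tau_0 \) by combining the tail identity
\[
\mathbb E_{(x,\sigma)}\bigl[e^{\lambda \tau_0}\bigr] = 1 + \lambda \int_0^{+\infty} e^{\lambda t}\, \mathbb P_{(x,\sigma)}(\tau_0 > t)\, dt
\]
with the non-asymptotic large deviation bound of Lemma~\ref{lem:LDP_velocities}. The key observation, already noted in the text preceding the lemma, is that on \( \{ \tau_0 > t \} \) the flow has not been truncated at \( 0 \), so \( x(t) = x - 2ct + v \int_0^t \sigma(s)\,ds > 0 \), which rearranges to
\[
\frac{1}{t}\int_0^t \frac{\sigma(s)}{2}\, ds \; > \; \frac{c}{v} - \frac{x}{2 v t}.
\]
Hence \( \mathbb P_{(x,\sigma)}(\tau_0 > t) \) is bounded by the probability that the time-average of \( \sigma/2 \) exceeds the right-hand side, and I can feed this into Lemma~\ref{lem:LDP_velocities}.

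I would split the integral at \( t_\star := x/(2c) \), which is the threshold at which the lower bound on the time-average becomes positive. For \( t \le t_\star \), I use the trivial bound \( \mathbb P(\tau_0 > t) \le 1 \), which contributes \( \lambda \int_0^{t_\star} e^{\lambda t} dt = e^{\lambda x/(2c)} - 1 \). For \( t > t_\star \), set \( R_t := c/v - x/(2vt) \in (0, c/v) \). The initial distribution is \( \delta_\sigma \), so the prefactor in Lemma~\ref{lem:LDP_velocities} is \( \|\delta_\sigma / \pi_{\mathcal Q}\|_{L^2(\pi_{\mathcal Q})} = 1/\sqrt{\pi_{\mathcal Q}(\{\sigma\})} \), and I obtain
\[
\mathbb P_{(x,\sigma)}(\tau_0 > t) \le \frac{1}{\sqrt{\pi_{\mathcal Q}(\{\sigma\})}}\, e^{-t I(R_t)}.
\]
Using convexity of \( I \) (a Legendre transform, hence convex) together with any subderivative at \( c/v \), I get the linearization
\[
I(R_t) \ge I\!\left(\tfrac{c}{v}\right) - I'\!\left(\tfrac{c}{v}\right) \frac{x}{2 v t},
\]
so that \( -t I(R_t) \le -t I(c/v) + I'(c/v)\, x/(2v) \). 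This decouples the \( t \)-dependence from the \( x \)-dependence in the exponent.

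Plugging in and integrating, the tail contribution becomes
\[
\frac{\lambda\, e^{I'(c/v)\, x/(2v)}}{\sqrt{\pi_{\mathcal Q}(\{\sigma\})}} \int_{x/(2c)}^{+\infty} e^{(\lambda - I(c/v)) t}\, dt \;=\; \frac{\lambda}{I(c/v) - \lambda}\, \frac{e^{\left(\frac{I'(c/v)}{2v} - \frac{I(c/v) - \lambda}{2c}\right) x}}{\sqrt{\pi_{\mathcal Q}(\{\sigma\})}},
\]
which is finite precisely because \( \lambda < I(c/v) \). Adding the two contributions and the initial \( 1 \) from the tail identity, the \( -1 \) from the first piece cancels and yields exactly the stated bound. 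The only real subtlety will be to justify the use of a subderivative of \( I \) cleanly (since \( I \) need not be differentiable in general tumbling mechanisms) and to confirm that \( R_t \) lies in the interior of the effective domain of \( I \) for \( t > x/(2c) \); both follow from the convexity of \( I \) and the fact that \( R_t \in (0, c/v) \) on this range.
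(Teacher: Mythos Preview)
Your proposal is correct and follows essentially the same approach as the paper: the same tail identity, the same split at $x/(2c)$, the same application of Lemma~\ref{lem:LDP_velocities} with the subderivative linearization of $I$, and the same computation of the prefactor $\|\delta_\sigma/\pi_{\mathcal Q}\|_{L^2(\pi_{\mathcal Q})}=1/\sqrt{\pi_{\mathcal Q}(\{\sigma\})}$.
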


\begin{proof}
We have
$$
{\tau_0} > t \implies x(0) - 2ct + v \int_0^t \sigma(s) ds > 0 \implies \frac{1}{t} \int_0^t \frac{\sigma(s)}{2} ds > \frac{c}{v} - \frac{x(0)}{2vt},
$$
so that
\begin{align*}
\mathbb E_{(x, \sigma)} \left[ e^{\lambda {\tau_0}} \right] &= 1 + \int_0^{+\infty} \lambda e^{\lambda t} \mathbb{P}_{(x, \sigma)} \left( {\tau_0} > t \right) dt \\
&\le 1 + \int_0^{\frac{x}{2c}} \lambda e^{\lambda t} dt + \int_{\frac{x}{2c}}^{+\infty} \lambda e^{\lambda t} \mathbb{P}_{(x,\sigma)} \left( \frac{1}{t} \int_0^t \frac{\sigma(s)}{2} ds \ge \frac{c}{v} - \frac{x}{2vt}\right) dt \\
&\le e^{\frac{\lambda}{2 c} x} + \int_{\frac{x}{2c}}^{+\infty} \lambda e^{\lambda t} \left\Vert \frac{\delta_\sigma}{\pi_{\mathcal Q}}\right\Vert_{L^2(\pi_{\mathcal Q})} e^{-I \left(\frac{c}{v} - \frac{x}{2vt}\right) t}dt
\end{align*}
using Lemma~\ref{lem:LDP_velocities} for the last inequality. Because $I'\left(\frac{c}{v}\right)$ is a subderivative
$$
I \left( \frac{c}{v} - \frac{x}{2 vt}\right) \ge I \left(\frac{c}{v}\right) - I'\left(\frac{c}{v}\right) \frac{x}{2vt} \iff -t I \left(\frac{c}{v} - \frac{x}{vt}\right) \le -t I\left(\frac{c}{v}\right) + I'\left(\frac{c}{v}\right) \frac{x}{2v}
$$
so the desired result follows from $\left\Vert \frac{\delta_\sigma}{\pi_{\mathcal Q}}\right\Vert_{L^2(\pi_{\mathcal Q})} = \frac1{\sqrt{\pi_{\mathcal Q}\left(\{\sigma\}\right)}}$. 
\end{proof}

The following lemma compares the exponential rates appearing in Lemma~\ref{lem:lyapunov_function}. In particular, it ensures that $(x, \sigma) \mapsto \mathbb E_{(x, \sigma)} \left[ e^{\lambda \tau_0}\right]$ is $\pi$-integrable. This is also a consequence of~\cite[Theorem 4.3]{meyn93}.

\begin{Lem}\label{lem:lyapunov_function_integrability}
We have
$$
\frac{I\left(\frac{c}{v}\right)}{2c} \le \frac{I'\left(\frac{c}{v}\right)}{2v} \le \lambda_\pi
$$
where $I'\left(\frac{c}{v}\right)$ is any subderivative.
\end{Lem}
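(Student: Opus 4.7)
The plan is to treat the two inequalities separately, both by exploiting the variational characterization of $\Lambda$ (and hence of $I$) together with the spectral equation for the density of $\pi$.

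\textbf{First inequality.} The key point is $I(0)=0$. Testing $f=\mathbf 1$ in the Rayleigh quotient defining $\Lambda$ shows $\Lambda(u)\ge 0$ for every $u$, using $\mathcal Q\mathbf 1=0$ and the symmetry $\sum_\sigma \pi_{\mathcal Q}(\sigma)\sigma/2=0$. On the other hand $\Lambda(0)\le 0$ since $\frac12(\mathcal S\mathcal Q+\mathcal Q^t\mathcal S)$ is negative semi-definite, being $\mathcal S$ times the reversibilization of $\mathcal Q$ with respect to $\pi_{\mathcal Q}$. Hence $\Lambda(0)=0$, from which $I\ge 0$ and $I(0)=0$. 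Writing the subgradient inequality of the convex function $I$ at $c/v$ with test point $0$ yields $0=I(0)\ge I(c/v)-(c/v)I'(c/v)$, equivalently $\frac{I(c/v)}{2c}\le \frac{I'(c/v)}{2v}$.

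\textbf{Second inequality, key step.} Set $u_\pi:=2v\lambda_\pi$ and let $\zeta^*=-\lambda_\pi$ be the largest (least negative) root of $\det(-\zeta\mathcal V+\mathcal Q^t)=0$, which controls the tail of $\pi$ by Propositions~\ref{prop:instantaneous_linear_invariant_measure} and~\ref{prop:finite_linear_invariant_measure}. The vector $\mathrm a(\zeta^*)$ from Lemma~\ref{lem:eigenvector} lies in $\ker(-\zeta^*\mathcal V+\mathcal Q^t)$; rewriting using $\mathcal V=2v\tilde{\mathcal V}-2cI$ gives
\[
(\mathcal Q^t+u_\pi\tilde{\mathcal V})\,\mathrm a(\zeta^*)=\frac{u_\pi c}{v}\,\mathrm a(\zeta^*),
\]
so $u_\pi c/v$ is a real eigenvalue of $\mathcal Q^t+u_\pi\tilde{\mathcal V}$. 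The classical fact that any real eigenvalue of a real matrix is bounded above by the top eigenvalue of its symmetric part, applied in the $\mathcal S$-weighted inner product that defines $\Lambda$, then gives $u_\pi c/v\le \Lambda(u_\pi)$.

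\textbf{Concluding convexity argument.} Introduce the convex function $g(u):=\Lambda(u)-uc/v$. From the previous paragraph $g(u_\pi)\ge 0$, while $g(0)=\Lambda(0)=0$ and $g'(0)=\Lambda'(0)-c/v=-c/v<0$ (using $\Lambda'(0)=0$, which follows either from the evenness of $\Lambda$ under $\sigma\leftrightarrow -\sigma$ or simply from $\Lambda\ge 0=\Lambda(0)$). By Fenchel duality between $\Lambda$ and $I$, any subderivative $u^*=I'(c/v)$ is a minimizer of $g$ with $g(u^*)=-I(c/v)\le 0$. Since the sub-level set $\{u:g(u)\le 0\}$ is a convex interval containing $0$, and since $g(u_\pi)\ge 0$ with $u_\pi>0$, this interval ends before $u_\pi$, so every minimizer satisfies $u^*\le u_\pi$. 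Dividing by $2v$ yields the second inequality.

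\textbf{Main obstacle.} The first inequality is a short exercise in convex analysis once $I(0)=0$ is in hand. The heart of the argument is producing $\Lambda(u_\pi)\ge u_\pi c/v$, which rests on the observation that the generalized eigenvalue problem $\mathcal Q^t w=\zeta\mathcal V w$ dictating $\lambda_\pi$ and the parametric family $\mathcal Q^t+u\tilde{\mathcal V}$ entering $\Lambda$ are the same object viewed under two changes of variable. The case $v=2c$ of the finite process, where $\mathcal V$ is singular, needs no separate treatment because the eigenvalue equation is stated without inverting $\mathcal V$.
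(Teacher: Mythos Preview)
Your proof of the first inequality is correct and essentially identical to the paper's: both reduce to $I(0)=0$ and the subgradient inequality at $c/v$.

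For the second inequality you take a genuinely different route from the paper. The paper argues probabilistically: it invokes Khasminskii's cycle representation $\pi(\{x>a\})=\mathbb E_{(0,2)}\big[\int_0^{\tau_{(0,2)}}\mathbf 1_{\{x(t)>a\}}\,dt\big]\big/\mathbb E_{(0,2)}[\tau_{(0,2)}]$, observes that $x(t)>a$ before returning to the origin forces the empirical mean of $\sigma/2$ to exceed $c/v+a/(2vt)$, and then feeds this into the concentration bound of Lemma~\ref{lem:LDP_velocities} to get $\pi(\{x>a\})\lesssim e^{-\frac{I'(c/v)}{2v}a}$. Your argument is purely spectral: rewriting the bulk eigenvalue equation $(-\zeta^*\mathcal V+\mathcal Q^t)\mathrm a=0$ with $\mathcal V=2v\tilde{\mathcal V}-2cI$ exhibits $u_\pi c/v$ as a real eigenvalue of $\mathcal Q+u_\pi\tilde{\mathcal V}$ (same spectrum as its transpose), whence $u_\pi c/v\le\Lambda(u_\pi)$ by the Rayleigh quotient; Fenchel duality and convexity of $g(u)=\Lambda(u)-uc/v$ then pin the subderivative $u^*=I'(c/v)$ below $u_\pi$. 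This is correct: with $g(0)=0$, $g(u^*)=-I(c/v)<0$ and $g(u_\pi)\ge0$, assuming $0<u_\pi\le u^*$ forces $g(u_\pi)\le(u_\pi/u^*)g(u^*)<0$ by convexity, a contradiction. Your approach is more algebraic, avoids the hitting-time representation entirely, and makes the link between $\lambda_\pi$ and the variational formula for $\Lambda$ completely transparent; the paper's approach, by contrast, is more self-contained probabilistically and reuses the same large-deviation machinery as Lemma~\ref{lem:lyapunov_function}. Two minor remarks: your appeal to Lemma~\ref{lem:eigenvector} is only literally valid in the finite case, but all you need is $\ker(-\zeta^*\mathcal V+\mathcal Q^t)\ne\{0\}$, which follows from the determinant vanishing in both cases; and the claim $\Lambda'(0)=0$ is true here (the top eigenvalue of the reversibilization at $u=0$ is simple) but is not actually needed, since the convexity argument above goes through using only $g(0)=0$.
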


\begin{proof}
By definition of $I'\left(\frac{c}{v}\right)$ we have $I(R) \ge I\left(\frac{c}{v}\right) + \left(R - \frac{c}{v}\right) I'\left(\frac{c}{v}\right)$ for all $R \in \mathbb R$. Because $I(0) = 0$ this is equivalent to the first inequality when $R=0$. \\

We now turn in to the second inequality, which we prove in the instantaneous case. The finite case can be treated using similar arguments. If we set $\tau_{(0, 2)} := \inf \{t > 0 : (x(t), \sigma(t)) = (0, 2)\}$ then we have the representation
$$
\pi\left( A \right) = \frac{\mathbb E_{(0, 2)} \left[ \int_0^{\tau_{(0, 2)}} 1_{\left\{X(s) \in A\right\}} dt \right]}{\mathbb E_{(0, 2)}\left[\tau_{(0, 2)}\right]}.
$$
for all measurable sets $A$ (see~\cite[Theorem 2.1]{khasminskii60} and its proof). In particular
$$
\pi\left(\{ x > a \}\right) = \frac{\mathbb E_{(0, 2)} \left[ \int_0^{\tau_{(0, 2)}} 1_{\left\{x(t)>a\right\}} dt \right]}{\mathbb E_{(0, 2)}\left[\tau_{(0, 2)}\right]}.
$$
So using
$$
x(t) > a \text{ and } t < \tau_{(0, 2)} \implies -2ct + v \int_0^t \sigma(s) ds > a \implies \frac{1}{t} \int_0^t \frac{\sigma(s)}{2} ds > \frac{c}{v} + \frac{a}{2tv}
$$
and Lemma~\ref{lem:LDP_velocities} we deduce
\begin{align*}
\mathbb E_{(0, 2)} \left[ \int_0^{\tau_{(0, 2)}} 1_{\left\{x(t)>a\right\}} dt \right] &= \int_0^{+\infty} \mathbb P_{(0, 2)} \left( x(t) > a \text{ and } t < \tau_{(0, 2)} \right)dt \\
&\le \left\Vert \frac{\delta_2}{\pi_{\mathcal Q}}\right\Vert_{L^2(\pi_{\mathcal Q})} \int_0^{+\infty} e^{-t I\left(\frac{c}{v} + \frac{a}{2tv}\right)} dt.
\end{align*}

Because $I'\left(\frac{c}{v}\right)$ is a subderivative
$$
I\left(\frac{c}{v}+ \frac{a}{2tv}\right) \ge I\left(\frac{c}{v}\right) + I'\left(\frac{c}{v}\right) \frac{a}{2tv} \implies -t I\left(\frac{c}{v}\right) - I'\left(\frac{c}{v}\right) \frac{a}{2v} \ge -t I\left(\frac{c}{v}+ \frac{a}{2tv}\right)
$$
and thus
$$
\int_0^{+\infty} e^{-t I\left(\frac{c}{v} + \frac{a}{2tv}\right)} dt \le \frac{e^{-\frac{I'\left(\frac{c}{v}\right)}{2v} a}}{I\left(\frac{c}{v}\right)}.
$$

Hence the decay rate of $\pi\left(\{x > a\}\right)$ is at least $\frac{I'\left(\frac{c}{v}\right)}{2 v}$ which is precisely the second inequality.
\end{proof}

The following final preliminary lemma constructs the synchronous coupling and shows that it has two important properties: after a short period of time the velocities of the two copies \( \sigma(t) \) and \( \tilde \sigma(t) \) coincide and the order between $x$ and $\tilde x$ is preserved.

\begin{Lem}[Synchronous coupling] \label{lem:linear_synchronous_coupling}
For all $(\sigma_0, x_0), (\tilde x_0, \tilde \sigma_0) \in E$ there exists a coupling $\mathbb Q_{(\sigma_0, x_0), (\tilde x_0, \tilde \sigma_0)}$ of $X(t) = (x(t), \sigma(t))$ and $\tilde X(t) = (\tilde x(t), \tilde \sigma(t))$ such that
\begin{itemize}
\item[(i)] $X$ and $\tilde X$ are instantaneous (resp.~finite) linear processes with initial states $(x_0, \sigma_0)$ and $(\tilde x_0, \tilde \sigma_0)$,
\item[(ii)] $\mathbb P \left( \sigma(s) = \tilde \sigma(s) \text{ for all } s \ge t\right) \ge 1 -  2 e^{-\lambda_{\mathcal Q} t}$ where $\lambda_{\mathcal Q}$ is the spectral gap of $\mathcal Q$,
\item[(iii)] if $\sigma(s) = \tilde \sigma(s)$ for all $s \ge t$ then
\begin{align*}
x(t) \le \tilde x(t) \implies x(s) \le \tilde x(s) &\text{ for all } s \ge t, \\
\tilde x(t) \le x(t) \implies \tilde x(s) \le  x(s) &\text{ for all } s \ge t,
\end{align*}
meaning that the ordering of $x$ and $\tilde x$ is preserved.
\end{itemize}
\end{Lem}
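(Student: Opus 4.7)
The plan is to build a two-layer coupling: first a joint dynamics on $\Sigma^2$ that forces the two velocity chains to coalesce, then a purely pathwise construction of the two positions from the shared velocity data via Definition~\ref{def:generic_process}. For the velocity layer I would follow the recipe of~\cite{guillin24}: let $\sigma$ and $\tilde\sigma$ evolve as independent copies of the $\mathcal Q$-chain until the meeting time $\tau_c = \inf\{s \ge 0 : \sigma(s) = \tilde\sigma(s)\}$, and from $\tau_c$ onward drive them by common Poisson clocks so that every subsequent jump is simultaneous and identical (possible because both marginals share the same state space and generator). Since $\{\tau_c \le t\} \subseteq \{\sigma(s) = \tilde\sigma(s) \text{ for all } s \ge t\}$, claim (ii) reduces to bounding the tail of $\tau_c$. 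Once the velocities are fixed, defining $x$ and $\tilde x$ pathwise from the recursion of Definition~\ref{def:generic_process} applied to $\sigma$ and $\tilde\sigma$ respectively yields two marginal processes with the right laws, settling (i).

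For property (iii), the key observation is that for the linear potential $V'(y) = c$ on $\{y > 0\}$, so the flow of $\partial_s y = -2V'(y) + v\sigma(s)$ is an affine translation in the initial condition, $\phi^\sigma_u(y) = y + g(u)$ with $g(u) = \int_0^u (-2c + v\sigma(r)) dr$ depending only on the velocity trajectory. Composing with $\max(0,\cdot)$ preserves monotonicity, so the one-step map $y \mapsto \max(0, \phi^\sigma_u(y))$ appearing in Definition~\ref{def:generic_process} is nondecreasing. Restarting at time $t$ by the strong Markov property and proceeding inductively over the successive jump intervals of the common velocity after $t$, the inequality $x(t) \le \tilde x(t)$ propagates to all $s \ge t$, and symmetrically for the reverse ordering.

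The main obstacle is the quantitative estimate $\mathbb P(\tau_c > t) \le 2 e^{-\lambda_{\mathcal Q} t}$ in (ii). My strategy here combines the coupling inequality with the spectral gap of $\mathcal Q$. Replacing the purely independent meeting by a maximal coupling of the two marginals on $[0,t]$ yields
$$\mathbb P(\tau_c > t) \le \tv{\delta_{\sigma_0} P^{\mathcal Q}_t - \delta_{\tilde\sigma_0} P^{\mathcal Q}_t} \le \tv{\delta_{\sigma_0} P^{\mathcal Q}_t - \pi_{\mathcal Q}} + \tv{\delta_{\tilde\sigma_0} P^{\mathcal Q}_t - \pi_{\mathcal Q}},$$
and each term on the right is controlled by the reversible $L^2$-bound, which decays as $e^{-\lambda_{\mathcal Q} t}$ with $\lambda_{\mathcal Q} = 2\omega$ (instantaneous case, Proposition~\ref{prop:explicit_decay_rates}(i)) or $\lambda_{\mathcal Q} = \alpha$ (finite case, Proposition~\ref{prop:explicit_decay_rates}(ii)). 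The delicate point is the numerical prefactor $2$: for the instantaneous rates it falls out of $\pi_{\mathcal Q,\min} = 1/4$ cleanly, but for the finite rates $\pi_{\mathcal Q,\min}$ depends badly on $\alpha/\beta$, so I would either sharpen the construction by exploiting the $\pm$-symmetry of $\mathcal Q$ together with the explicit form of $\pi_{\mathcal Q}$ given in Notation~\ref{not:zeta2_zeta3_piQ}, or work directly on the product chain on $\Sigma^2$ and track the hitting time of the diagonal via a Lyapunov argument on the off-diagonal part. I expect this last piece to be the only step that truly requires care; the construction in the first layer and the monotonicity used in (iii) are routine.
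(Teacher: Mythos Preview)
Your construction for (i) and the monotonicity argument for (iii) are correct and essentially match the paper. The gap is in (ii): your spectral-gap route through $\tv{\delta_\sigma P_t^{\mathcal Q} - \pi_{\mathcal Q}} \lesssim \pi_{\mathcal Q}(\sigma)^{-1/2} e^{-\lambda_{\mathcal Q} t}$ cannot deliver the uniform prefactor $2$ in the finite case, because $\pi_{\mathcal Q,\min}$ degenerates as $\alpha/\beta \to 0$ or $\infty$ (see the explicit $\pi_{\mathcal Q}$ in Notation~\ref{not:zeta2_zeta3_piQ}). You flag this yourself, but the fixes you sketch---exploiting the $\pm$-symmetry, or running a Lyapunov argument on the $36$-state product chain on $\Sigma^2$---are exactly the complications the paper avoids, and neither is carried far enough here to close the gap.

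The missing idea is to couple one level down. The paper remembers that $\sigma = f(\sigma_1,\sigma_2)$ with $\sigma_1,\sigma_2$ independent \emph{single-particle} velocities on $\{\pm1\}$ (resp.\ $\{0,\pm1\}$), and couples each pair $(\sigma_i,\tilde\sigma_i)$ separately via an explicit $4$-state (resp.\ $9$-state) chain designed so that from every state there is a rate-$2\omega$ (resp.\ rate-$\alpha$) transition into the diagonal. Each single-particle meeting time is then dominated by an exponential of parameter exactly $\lambda_{\mathcal Q}$, and the factor $2$ is simply the union bound over $i\in\{1,2\}$---no spectral estimate and no dependence on $\pi_{\mathcal Q,\min}$ enters. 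The paper's own remark after the proof notes that coupling $(\sigma,\tilde\sigma)$ directly on $\Sigma^2$ would require describing a $36$-state jump process and is unlikely to be illuminating; your proposal runs straight into that obstacle.
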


\begin{proof} (i) Let $f : \{\pm 1\}^2 \rightarrow \Sigma$ (resp.~$f : \{0, \pm 1\}^2 \rightarrow \Sigma$) be defined by $f(\sigma_1, \sigma_2) = \sigma_2 - \sigma_1$ (resp. $f(\sigma_1, \sigma_2) = \sigma_2 - \sigma_1$ when $\sigma_1\ne \sigma_2$ as well as $f(\pm 1, \pm 1) = 0_\pm$ and $f(0, 0) = 0_0$) and choose $\sigma_{1, 0}, \sigma_{2, 0}, \tilde \sigma_{1, 0}, \tilde \sigma_{2, 0}$ such that $\sigma_0 = f(\sigma_{1, 0}, \sigma_{2, 0})$ and $\tilde \sigma_0 = f(\tilde \sigma_{1, 0}, \tilde \sigma_{2, 0})$. 
	
\begin{figure}[H]
\centering

\begin{subfigure}[t]{0.3\textwidth}
\centering
\scalebox{0.8}{
\begin{tikzpicture}[-latex, node distance=2cm, main/.style = {draw, circle, minimum size=.8cm, font=\footnotesize}]
\node[main, minimum size = 30pt] (++) {$++$};
\node[main, minimum size = 30pt] [right of=++] (+-) {$+-$};

\node[main, minimum size = 30pt] [below of=++] (-+) {$-+$};
\node[main, minimum size = 30pt] [right of=-+] (--) {$--$};

\path[every node/.style={font=\footnotesize}]
(++) edge[bend right=20] node[fill=white] {$\omega$} (--)
(--) edge[bend right=20] node[fill=white] {$\omega$} (++)

(+-) edge node[fill=white] {$\omega$} (++)
(+-) edge node[fill=white] {$\omega$} (--)
(-+) edge node[fill=white] {$\omega$} (++)
(-+) edge node[fill=white] {$\omega$} (--)
;
\end{tikzpicture}
}
\caption{Instantaneous tumble}
\label{fig:instantaneous_single_velocity_coupling}
\end{subfigure}
\qquad
\begin{subfigure}[t]{0.3\textwidth}
	\centering
	\scalebox{0.8}{
	\begin{tikzpicture}[-latex, node distance=2.2cm, main/.style = {draw, circle, minimum size=.8cm, font=\footnotesize}]
	\node[main, minimum size = 30pt] (++) {$++$};
	\node[main, minimum size = 30pt] [right of=++] (+0) {$+0$};
	\node[main, minimum size = 30pt] [right of=+0] (+-) {$+-$};
	
	\node[main, minimum size = 30pt] [below of=++] (0+) {$0+$};
	\node[main, minimum size = 30pt] [right of=0+] (00) {$00$};
	\node[main, minimum size = 30pt] [right of=00] (0-) {$0-$};
	
	\node[main, minimum size = 30pt] [below of=0+] (-+) {$-+$};
	\node[main, minimum size = 30pt] [right of=-+] (-0) {$-0$};
	\node[main, minimum size = 30pt] [right of=-0] (--) {$--$};
	
	\path[every node/.style={font=\footnotesize}]

	(++) edge[bend right=20] node[fill=white] {$\alpha$} (00)
	(--) edge[bend right=20] node[fill=white] {$\alpha$} (00)
	(00) edge[bend right=20] node[fill=white] {$\beta/2$} (++)
	(00) edge[bend right=20] node[fill=white] {$\beta/2$} (--)
	
	(+-) edge node[fill=white] {$\alpha$} (00)
	(-+) edge node[fill=white] {$\alpha$} (00)
	(+0) edge node[fill=white] {$\alpha$} (00)
	(-0) edge node[fill=white] {$\alpha$} (00)
	(0+) edge node[fill=white] {$\alpha$} (00)
	(0-) edge node[fill=white] {$\alpha$} (00)
	
	(0-) edge node[fill=white] {$\beta/2$} (+-)
	(0-) edge node[fill=white] {$\beta/2$} (--)
	
	(0+) edge node[fill=white] {$\beta/2$} (++)
	(0+) edge node[fill=white] {$\beta/2$} (-+)
	
	(+0) edge node[fill=white] {$\beta/2$} (+-)
	(+0) edge node[fill=white] {$\beta/2$} (++)
	
	(-0) edge node[fill=white] {$\beta/2$} (--)
	(-0) edge node[fill=white] {$\beta/2$} (-+)
	;
	\end{tikzpicture}
	}
	\caption{Finite tumble}
	\label{fig:finite_single_velocity_coupling}
\end{subfigure}
\caption{Single velocity coupling}
\label{fig:single_velocity_coupling}
\end{figure}
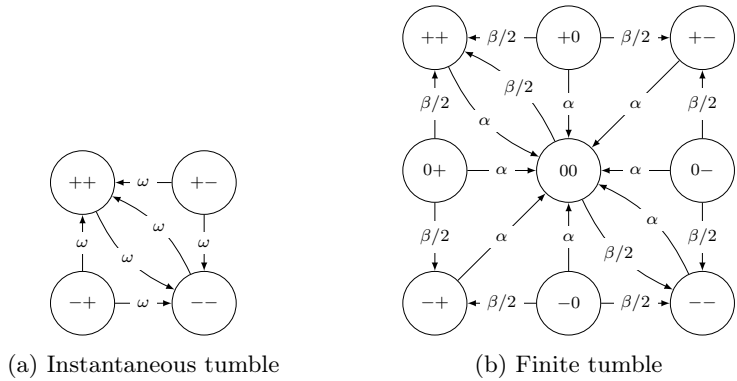

Let $(\sigma_1, \tilde \sigma_1)$ and $(\sigma_2, \tilde \sigma_2)$ be independent Markov jump processes with the transition rates of Figure~\ref{fig:instantaneous_single_velocity_coupling} (resp.~\ref{fig:finite_single_velocity_coupling}) and initial states $(\sigma_{1, 0}, \tilde \sigma_{1, 0})$ and $(\sigma_{2, 0}, \tilde \sigma_{2, 0})$. It follows from~\cite[Theorem 2.4]{ball93} that $\sigma = f(\sigma_1, \sigma_2)$ and $\tilde \sigma = f(\tilde \sigma_1, \tilde \sigma_2)$ are Markov jump processes. By~\cite[Theorem 2.3]{ball93}, their transition rates are given by Figure~\ref{fig:instantaneous_tumble_relative_particle_velocity_transition_rates} (resp.~\ref{fig:finite_tumble_relative_particle_velocity_transition_rates}). \\

Finally, recursively construct
\begin{align*}
x(t) &= \max(0, x_0 + (v \sigma_0 - 2c) t) \text{ for } t \in [0, T_1),\\
x(t) &= \max(0, x(T_1) + (v \sigma(T_1) - 2c) (t-T_1)) \text{ for } t \in [T_1, T_2),
\end{align*}
and so forth. Construct $\tilde x(t)$ similarly and set
$$
X(t) = (x(t), \sigma(t)), \quad \tilde X(t) = (\tilde x(t), \tilde \sigma(t)).
$$

(ii) Recall that $\lambda_{\mathcal Q} = 2\omega$ in the instantaneous case and $\lambda_{\mathcal Q} = \alpha$ in the finite case by Proposition~\ref{prop:explicit_decay_rates}. Set $\tau_{\sigma}^i = \inf \{ t \ge 0 : \sigma_i(t) = \tilde \sigma_i(t) \}$ and observe that $\sigma(s) = \tilde \sigma(s)$ for all $s \ge \max \{\tau_{\sigma}^1, \tau_{\sigma}^2\}$ hence it is enough to show $\mathbb P \left( \tau_{\sigma}^i > t\right) \le e^{-\lambda_{\mathcal Q} t}$.

In the instantaneous case, if $\sigma_{i, 0} \neq \tilde \sigma_{i, 0}$ then we have $\sigma_i = \tilde \sigma_i$ after the first jump of $(\sigma_i, \tilde \sigma_i)$ which is exponentially distributed with parameter $2 \omega$ (see figure \ref{fig:instantaneous_single_velocity_coupling}). Hence $\mathbb P \left( \tau_{\sigma}^i > t\right) \le e^{-2 \omega t}$.

In the finite case, notice that all states other than $(0, 0)$ have a transition with rate $\alpha$ to the state~$(0, 0)$ (see figure \ref{fig:finite_single_velocity_coupling}). Hence, if $(\sigma_{i, 0}, \tilde \sigma_{i, 0}) \neq (0, 0)$ then $\tau^i_{00} := \inf \{ t \ge 0 : \sigma_i(t) = \tilde \sigma_i(t) = 0\}$ is exponentially distributed with parameter $\alpha$. Hence $\mathbb P \left( \tau_{\sigma}^i > t\right) \le \mathbb P \left( \tau^i_{00} > t \right) \le e^{-\alpha t}$.

(iii) Follows from the recursive construction of $x$ and $\tilde x$ and the fact that
$$
x \mapsto \max(0, x + (v \sigma(T_k) - 2c)(t - T_k))
$$
is always a nondecreasing function.
\end{proof}

\begin{Rem}
The coupling \( (X, \tilde X) \) constructed in Lemma \ref{lem:linear_synchronous_coupling} is not jointly Markovian. Indeed, if $(\sigma_i, \tilde \sigma_i)$ has the transition rates \ref{fig:finite_single_velocity_coupling} then, even though $\sigma = f(\sigma_1, \sigma_2)$ and $\tilde \sigma = f(\tilde \sigma_1, \tilde \sigma_2)$ are Markovian, the joint process $(\sigma, \tilde \sigma)$ is not Markovian. This issue could be addressed by directly coupling the velocities \( \sigma \) and \( \tilde{\sigma} \). However, this approach would require describing a Markov jump process with \( 36 \) states in the finite linear case, which is unlikely to provide valuable insight.
\end{Rem}

We can now turn to the proof of the main result of this section.

\begin{proof}[Proof of Theorem~\ref{thm:long_time_behavior_linear_process}] (i) In the instantaneous case, Lemma~\ref{lem:lyapunov_function} and~\cite[Theorem 4.1]{meyn93} applied to the closed petite set $C = \{(0, 2)\}$, the test function $f(x,\sigma) = 1$ and the shift $\delta = 1$ imply the existence of a unique invariant probability measure. A similar argument applies to the finite case.
	
	(ii) Let $(x_0, \sigma_0) \in E$ be arbitrary but fixed and set
	$$
	\mathbb P = \int \mathbb Q_{(x_0, \sigma_0), (\tilde x, \tilde \sigma)} d\pi(\tilde x, \tilde \sigma)
	$$
	where $\mathbb Q_{(x_0, \sigma_0), (\tilde x, \tilde \sigma)}$ is the coupling constructed in Lemma~\ref{lem:linear_synchronous_coupling}. We denote $X(t) = (x(t), \sigma(t))$ (resp.~$\tilde X(t) = (\tilde x(t), \tilde \sigma(t))$) the first (resp.~second) marginal of $\mathbb P$ and observe that this is a coupling of two instantaneous (resp.~finite) linear processes with initial distributions $\delta_{(x_0, \sigma_0)}$ and $\pi$. \\
	
	Fix $\lambda \in \left(0,I\left(\frac{c}{v}\right)\right)$ and set $\theta = \frac{ \lambda}{\lambda_{\mathcal Q} + \lambda}$. For any time $T \in \mathbb R_+$ define $\tau_0^T := \{ t > T : x(t) = 0\}$ and $\tilde \tau_0^T := \{ t > T : \tilde x(t) = 0\}$. If
	\begin{align*}
	\sigma(s) = \tilde \sigma(s) \text{ for all } s \ge T, \quad x(T) \le \tilde x(T), \quad \tilde x(s_0) = 0 \text{ for some } s_0 \ge T,
	\end{align*}
then, because the ordering is preserved,
	$$
	0 \le x(s_0) \le \tilde x(s_0) = 0 \implies x(s_0) = \tilde x(s_0)
	$$
	and similarly if $\tilde x(T) \le x(T)$ and $x(s_0) = 0$ for some $s_0 \ge T$. This implies
	\begin{align*}
	\tv{\delta_{(x_0, \sigma_0)} P_t - \pi} &\le \mathbb P\left( \left\{ \sigma(s) = \tilde \sigma(s) \text{ for all } s\ge \theta t \right\}^c\right) + \mathbb P\left( \tau_0^{\theta t} > t\right) + \mathbb P\left( \tilde \tau_0^{\theta t} > t\right) \\
	&\le 2 e^{- \frac{\lambda_{\mathcal Q} \lambda}{\lambda_{\mathcal Q} + \lambda} t} + \mathbb P\left( \tau_0^{\theta t} > t\right) + \mathbb P\left( \tilde \tau_0^{\theta t} > t\right)
	\end{align*}
	using Lemma \ref{lem:linear_synchronous_coupling} for the second inequality. The Markov property yields
	\begin{align*}
	\mathbb P\left( \tau_0^{\theta t} > t\right) = \mathbb E_{(x_0, \sigma_0)}\left[ \mathbb P_{X(\theta t)} \left(\tau_0 > (1 - \theta)t\right) \right] \le \mathbb E_{(x_0, \sigma_0)}\left[ \frac{\mathbb E_{X(\theta t)} \left[e^{\lambda \tau_0}\right]}{e^{\lambda (1-\theta) t}} \right] = e^{-\frac{\lambda_{\mathcal Q} \lambda}{\lambda_{\mathcal Q} +\lambda}t} \mathbb E_{(x_0, \sigma_0)} \left[ P_{\theta t} f\right]
	\end{align*}
	where $f(x,\sigma) = \mathbb E_{(x, \sigma)} \left[ e^{\lambda \tau_0}\right]$ and \( \tau_0 = \inf \{ t > 0 : x(t) = 0 \} \). For all \( (x, \sigma) \in (0, +\infty) \times \Sigma \), we have
	$$
	\mathcal L f(x, \sigma) + \lambda f(x, \sigma) = 0 \implies \mathcal L f(x, \sigma) = -\lambda f(x, \sigma) \le 0.
	$$
	Hence $\mathcal L f$ is upper bounded and we have $P_t f = f + \int_0^t P_s \mathcal L f ds \le f + C t$ for
	\begin{align*}
		C = \max_{\sigma \in \Sigma} \mathcal L f(0, \sigma).
	\end{align*}
	Combining this with the previous inequality we get
	$$
	\mathbb P \left( \tau_0^{\theta t} > t\right) \le \left( f(x, \sigma) + Ct  \right) e^{-\frac{\lambda_{\mathcal Q} \lambda}{\lambda_{\mathcal Q} +\lambda}t},
	$$
	and the same arguments show
	$$
	\mathbb P \left( \tilde \tau_0^{\theta t} > t\right) \le \left( \pi(f) + Ct\right) e^{-\frac{\lambda_{\mathcal Q} \lambda}{\lambda_{\mathcal Q} +\lambda}t}.
	$$
	
	The result now follows from replacing $f(x, \sigma)$ by the upper bound in Lemma~\ref{lem:lyapunov_function} and combining Lemma~\ref{lem:lyapunov_function} and Lemma~\ref{lem:lyapunov_function_integrability} to see $\pi(f) < +\infty$.
\end{proof}

\begin{proof}[Proof of Proposition~\ref{prop:explicit_decay_rates}] (i) Recall that in the instantaneous case, we have
$$
\mathcal Q = \begin{pmatrix} -2\omega & 2\omega & 0 \\
\omega & -2\omega & \omega \\
0 & 2\omega & -2\omega
\end{pmatrix}, \quad
{\tilde {\mathcal V}} =
\begin{pmatrix}
1 & 0 & 0 \\
0 & 0 & 0 \\
0 & 0 & -1 \\
\end{pmatrix}, \quad
\mathcal S =
\begin{pmatrix}
\frac{1}{4} & 0 & 0 \\
0 & \frac{1}{2} & 0 \\
0 & 0 & \frac{1}{4}
\end{pmatrix}.
$$
The eigenvalues of \( \mathcal Q \) are \( 0, -2\omega \) and \( -4\omega \) so \( \lambda_\mathcal Q = 2\omega \). Notice that $\mathcal A := \mathcal Q + u {\tilde {\mathcal V}}$ is symmetric w.r.t.~the scalar product $\left\langle \mathcal S \cdot, \cdot \right\rangle$ (i.e.~$\mathcal A = \mathcal S^{-1} \mathcal A^t \mathcal S$) so that
$$
\Lambda(u) = \sup_{\left\langle\mathcal S f, f \right\rangle = 1} \left\langle \mathcal S \mathcal A f, f\right\rangle = - 2\omega + \sqrt{u^2 + 4 \omega^2}
$$
is the largest eigenvalue of $\mathcal A$ and the result follows by taking the Legendre transform.

(ii) The eigenvalues of \( \mathcal Q \) are
\[
	-2 {\alpha} - 2 {\beta},\qquad -2 {\alpha} - {\beta},\qquad -{\alpha} - {\beta},\qquad -2 {\alpha},\qquad -{\alpha},\qquad 0,
\]
so \( \lambda_{\mathcal Q} = \alpha \). By~\cite[Theorem 1.1]{lezaud01} and~\cite[Remark 1.2]{lezaud01}
$$
I(R) \ge \alpha R \frac{2 (1+r) R}{\left(1 + \sqrt{1 + 4(1+r)R}\right)^2} = \alpha R \times g((1+r)R)
$$
where $r = \frac{\alpha}{\beta}$ and \( g(X) = {2X}/{{\left(1 + \sqrt{1 + 4X}\right)}^2} \). One has
\begin{align*}
	g'(X) = \frac{2}{\sqrt{4 X + 1} {\left(\sqrt{4 X + 1} + 1\right)}^{2}} \ge 0 \quad \text{ and } \quad \left( \frac{g(X)}X\right)' = -\frac{8}{\sqrt{4 X + 1} {\left(\sqrt{4 X + 1} + 1\right)}^{3}} \le 0,
\end{align*}
so that
\[
	\inf_{X \ge 1} g(X) = \inf_{X \le 1} \frac{g(X)}X = g(1) = \frac{3 - \sqrt{5}}{4}
\]
and the result follows.
\end{proof}

\subsection{Instantaneous harmonic process}

In the harmonic case, the underlying deterministic dynamics are contracting in the sense that for any fixed $\sigma_0 \in \Sigma$ we have
$$
\partial_t x = -2\mu x + v \sigma_0 \text{ and } \partial_t \tilde x= -2\mu \tilde x + v \sigma_0 \implies x(t) - \tilde x(t) = e^{-2\mu t} (x(0) - \tilde x(0)).
$$
The random switching of $\sigma$ and the jamming interactions preserve this contractivity and hence the instantaneous harmonic process displays exponential convergence towards its invariant measure in a Wasserstein-type distance.

The same kind of contractivity is exploited in~\cite{benaim12} to obtain quantitative bounds for the convergence of processes randomly switching between contracting deterministic  dynamics. These results can also be extended to random switching between diffusions with contracting drifts (see~\cite[Section 4.1]{cloez15}). The following distance, which combines the Wasserstein distance of the $x$-marginal and the total variation distance for the $\sigma$-marginal, is naturally contracting for the instantaneous harmonic process.

\begin{Def}[\text{Mixed distance~\cite[Definition 1.4]{benaim12}}]\label{def:mixed_distance} For all probabilities $\eta, \tilde \eta$ on $[0, +\infty) \times \Sigma$ such that the $x$-marginal of $\eta$ and $\tilde \eta$ have a finite moment of order $p$, we define the mixed distance
$$
\mathcal W_p\left(\eta, \tilde \eta\right) := \inf \left\{ \left(\mathbb E \left|x - \tilde x\right|^p\right)^{\frac{1}{p}} + \mathbb P\left( \sigma \neq \tilde \sigma \right) : (x, \sigma) \sim \eta \text{ and } (\tilde x, \tilde \sigma) \sim \tilde \eta\right\}.
$$
\end{Def}

Adapting~\cite[Theorem 1.10]{benaim12} and~\cite[Corollary 1.11]{benaim12} to our setting yields the following theorem.

\begin{Thm}[Invariant measure and convergence of the instantaneous harmonic process]\label{thm:long_time_behavior_harmonic_process_wasserstein}	\mbox{}
\begin{itemize}
	\item[(i)] For all $1 \le p < q$ we have
	\begin{align*}
			&\mathcal W_p \left(\eta P_t, \tilde \eta P_t\right) \le \left( 1 + 2^\frac1s\right)^\frac1p \left( \mathcal F_q(\eta) + \mathcal F_q(\tilde \eta) \right) \exp\left({- \frac{2\omega\mu/s}{\omega/s + p\mu} t}\right) + 2 e^{-2\omega t}
	\end{align*}
	where \( s > 1 \) is such that \( \frac{p}{q} + \frac1s = 1 \) and 
	\[
		\mathcal F_r(\nu) := \left(\int \max(x, v/\mu)^r d\nu(x, \sigma)\right)^\frac1r.
	\]
	\item[(ii)] The instantaneous harmonic process has a unique invariant probability $\pi$ and
	$$
	\mathcal W_p \left(\eta P_t, \pi\right) \le \left( 1 + 2^\frac1s\right)^\frac1p \left( \mathcal F_q(\eta) + v/\mu \right) \exp\left({- \frac{2\omega\mu/s}{\omega/s + p\mu} t}\right) + 2 e^{-2\omega t}.
	$$
\end{itemize}
\end{Thm}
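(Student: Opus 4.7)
The plan is to follow the Wasserstein coupling strategy of~\cite{benaim12}, adapted to the jamming boundary and the mixed distance $\mathcal W_p$ of Definition~\ref{def:mixed_distance}. First, for any pair of initial distributions $\eta, \tilde\eta$, I would build a coupling of two instantaneous harmonic processes $(X, \tilde X)$ in the spirit of Lemma~\ref{lem:linear_synchronous_coupling}, coupling the individual-particle velocities according to Figure~\ref{fig:instantaneous_single_velocity_coupling}. A union bound as in the proof of that lemma produces a velocity-coupling time $T$ with $\mathbb P(T > t) \le 2 e^{-2\omega t}$ (using $\lambda_{\mathcal Q} = 2\omega$ from Proposition~\ref{prop:explicit_decay_rates}(i)), after which $\sigma(s) = \tilde\sigma(s)$ for all $s \ge T$. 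The contractivity of the harmonic flow at rate $e^{-2\mu t}$ combined with the $1$-Lipschitz continuity of $z \mapsto \max(0, z)$ then gives $|x(t) - \tilde x(t)| \le e^{-2\mu(t - T)} |x(T) - \tilde x(T)|$ on $\{T \le t\}$. Together with the trapping property $x(t) \le \max(x(0), v/\mu)$, which follows from $\partial_t x = -2\mu x + v\sigma \le 0$ on $\{x > v/\mu\}$, this yields the pointwise bound
\[
 |x(t) - \tilde x(t)|^p \le e^{-2p\mu(t - T)_+} \bigl(\max(x(0), v/\mu) + \max(\tilde x(0), v/\mu)\bigr)^p.
\]

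Starting from $\mathcal W_p(\eta P_t, \tilde\eta P_t) \le (\mathbb E|x(t) - \tilde x(t)|^p)^{1/p} + \mathbb P(\sigma(t) \ne \tilde\sigma(t))$, the second term is directly controlled by $\mathbb P(T > t) \le 2 e^{-2\omega t}$ and produces the last summand of the announced bound. For the first term I would apply Hölder with conjugate exponents $(q/p, s)$ to the pointwise estimate, and then use Minkowski to recognize $\mathcal F_q(\eta) + \mathcal F_q(\tilde\eta)$, obtaining
\[
 \mathbb E |x(t) - \tilde x(t)|^p \le \bigl(\mathcal F_q(\eta) + \mathcal F_q(\tilde\eta)\bigr)^p \bigl(\mathbb E e^{-2ps\mu(t - T)_+}\bigr)^{1/s}.
\]

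The Laplace transform on the right is then estimated by splitting at a parameter $T_0 \in [0, t]$:
\[
 \mathbb E e^{-2ps\mu(t - T)_+} \le e^{-2ps\mu(t - T_0)} + \mathbb P(T > T_0) \le e^{-2ps\mu(t - T_0)} + 2 e^{-2\omega T_0}.
\]
Invoking the subadditivity $(A + B)^{1/s} \le A^{1/s} + B^{1/s}$, which holds for $s \ge 1$, and then choosing $T_0 = \tfrac{ps\mu}{ps\mu + \omega}\, t$ so that the two exponents coincide, the $1/s$-th root becomes $(1 + 2^{1/s}) \exp\bigl(-\tfrac{2p\mu\omega}{ps\mu + \omega} t\bigr)$. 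Taking $p$-th roots and rewriting $\tfrac{2\mu\omega}{ps\mu + \omega} = \tfrac{2\omega\mu/s}{\omega/s + p\mu}$ yields exactly the prefactor $(1 + 2^{1/s})^{1/p}$ and the exponential decay rate announced in~(i).

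Part~(ii) then follows from (i) by a standard Cauchy argument in the complete metric space $(E, \mathcal W_p)$: applying (i) to $\eta$ and $\eta P_h$ shows that $(\eta P_t)_{t \ge 0}$ is $\mathcal W_p$-Cauchy, hence converges to a limit $\pi$ that is invariant by continuity, uniqueness being forced by the contraction estimate. Since the trapping property ensures that $\pi$ is supported in $[0, v/\mu] \times \Sigma$ one has $\mathcal F_q(\pi) \le v/\mu$, and substituting $\tilde\eta = \pi$ in (i) yields~(ii). The main technical hurdle I anticipate is the coupling construction itself: as noted in the remark after Lemma~\ref{lem:linear_synchronous_coupling}, the pair $(\sigma, \tilde\sigma)$ of relative velocities is not jointly Markovian when the particles are coupled individually, so the construction must be carried out at the level of the particle-level Markov chains rather than the relative velocity directly, and then the bound $\mathbb P(T > t) \le 2 e^{-2\omega t}$ is obtained from a union bound over the two particles.
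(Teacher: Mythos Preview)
Your proposal is correct and follows essentially the same synchronous-coupling strategy as the paper (which packages the harmonic coupling as Lemma~\ref{lem:harmonic_synchronous_coupling}). The only differences are cosmetic: the paper splits $\mathbb E|x(t)-\tilde x(t)|^p$ on the event $A_{\beta t}=\{\sigma(s)=\tilde\sigma(s)\text{ for }s\ge\beta t\}$ and applies H\"older only on $A_{\beta t}^c$, whereas you apply H\"older first to isolate $\mathbb E\,e^{-2ps\mu(t-T)_+}$ and then split at a deterministic time $T_0$; both routes yield the identical constant and rate after optimizing $\beta$ (resp.\ $T_0$). For (ii) the paper invokes Krylov--Bogolyubov on the absorbing compact set $[0,v/\mu]\times\Sigma$ rather than your Cauchy-sequence argument, but either works given the trapping bound $\mathcal F_q(\eta P_h)\le\mathcal F_q(\eta)$.
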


The theorem is stated for the instantaneous harmonic process but can be extended to other tumbling mechanisms (for example \ref{fig:finite_tumble_relative_particle_velocity_transition_rates}) and arbitrary contracting potentials, including higher-dimensional processes such as~\cite{smith22}. Our setting allows for a somewhat simplified proof, which we include for the sake of completeness. Following~\cite{benaim12}, we again turn to a coupling argument and start by adapting Lemma~\ref{lem:linear_synchronous_coupling} to the harmonic setting.

\begin{Lem}[Synchronous coupling] \label{lem:harmonic_synchronous_coupling}
	For all $(x_0, \sigma_0), (\tilde x_0, \tilde \sigma_0) \in E$ there exists a coupling $\mathbb Q_{(x_0, \sigma_0), (\tilde x_0, \tilde \sigma_0)}$ of $X(t) = (x(t), \sigma(t))$ and $\tilde X(t) = (\tilde x(t), \tilde \sigma(t))$ such that
	\begin{itemize}
		\item[(i)] $X$ and $\tilde X$ are instantaneous harmonic processes with initial states $(x_0, \sigma_0)$ and $(\tilde x_0, \tilde \sigma_0)$,
		\item[(ii)] $\mathbb P\left(\sigma(s) = \tilde \sigma(s) \text{ for all } s \ge t\right) \ge 1 - 2 e^{-2\omega t}$,
		\item[(iii)] if $\sigma(s) = \tilde \sigma(s)$ for all $s \ge t$ then
		\begin{align*}
		|x(s) - \tilde x(s)| \le e^{-2\mu (s - t)} |x(t) -\tilde x(t)| \text{ for all } s \ge t.
		\end{align*}
	\end{itemize}
\end{Lem}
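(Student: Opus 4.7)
My plan is to reuse the construction of Lemma~\ref{lem:linear_synchronous_coupling} essentially verbatim for parts~(i) and~(ii), since the coupling of the velocity component does not depend on the potential, and then to devote the real work to part~(iii), which exploits the harmonic drift through a Grönwall argument.

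\textbf{Parts (i) and (ii).} I would first take independent copies $(\sigma_1, \tilde\sigma_1)$ and $(\sigma_2, \tilde\sigma_2)$ of the single-particle velocity coupling of Figure~\ref{fig:instantaneous_single_velocity_coupling}, initialized from preimages of $\sigma_0$ and $\tilde\sigma_0$ under $f(\sigma_1,\sigma_2) = \sigma_2-\sigma_1$. The results of~\cite{ball93} invoked in the proof of Lemma~\ref{lem:linear_synchronous_coupling}(i) then give that $\sigma = f(\sigma_1,\sigma_2)$ and $\tilde\sigma = f(\tilde\sigma_1,\tilde\sigma_2)$ are Markov jump processes with the rates of Figure~\ref{fig:instantaneous_tumble_relative_particle_velocity_transition_rates} and correct initial laws. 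I would then define $x$ and $\tilde x$ between the jumps of $\sigma$ and $\tilde\sigma$ exactly as in Definition~\ref{def:generic_process}, using the flow of $\partial_t x = -2\mu x + v\sigma$ and the reflection $\max(0,\cdot)$. This yields two instantaneous harmonic processes, proving (i). For (ii), inspection of Figure~\ref{fig:instantaneous_single_velocity_coupling} shows that whenever $\sigma_{i,0} \ne \tilde\sigma_{i,0}$ the first jump of $(\sigma_i,\tilde\sigma_i)$ is $\mathrm{Exp}(2\omega)$ distributed and occurs into a diagonal state, so the coalescence times $\tau^i_\sigma = \inf\{s : \sigma_i(s) = \tilde\sigma_i(s)\}$ satisfy $\mathbb P(\tau^i_\sigma > t) \le e^{-2\omega t}$. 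Since $\sigma \equiv \tilde\sigma$ after $\max(\tau^1_\sigma,\tau^2_\sigma)$, a union bound gives (ii).

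\textbf{Part (iii).} Assuming $\sigma(s) = \tilde\sigma(s)$ for all $s \ge t$, I would set $y(s) = x(s) - \tilde x(s)$ and prove the differential inequality $\frac{d}{ds}|y(s)| \le -2\mu |y(s)|$ almost everywhere on $[t,+\infty)$. Because the velocities $v\sigma$ are bounded and the support of the dynamics is contained in $[0,v/\mu]$, the trajectories $x,\tilde x$ are Lipschitz, hence so is $|y|$, and Grönwall's inequality yields the claimed bound. At any time $s \ge t$ at which both $x$ and $\tilde x$ are differentiable I distinguish three cases: if $x(s), \tilde x(s) > 0$ neither process is jammed and both obey the harmonic ODE with the same $\sigma$, giving $\dot y(s) = -2\mu y(s)$ exactly; if $x(s) = \tilde x(s) = 0$ then $y(s) = 0$ and the inequality is trivial; if only one is zero, say $\tilde x(s) = 0 < x(s)$, then $\tilde x$ being jammed forces $v\sigma(s) \le 0$ (since $V'(0) = 0$ for the harmonic potential), so $\dot x(s) = -2\mu x(s) + v\sigma(s) \le -2\mu x(s) = -2\mu y(s)$ while $\dot{\tilde x}(s) = 0$, which gives the desired one-sided bound on $\frac{d}{ds}|y(s)|$.

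\textbf{Main obstacle.} The only substantive step beyond Lemma~\ref{lem:linear_synchronous_coupling} is the mixed case in (iii) where exactly one particle is jammed. The key observation is that the jamming condition $-2V'(0) + v\sigma \le 0$ reduces, in the harmonic setting, to $v\sigma \le 0$, which is precisely the sign required for the free particle's drift $-2\mu x + v\sigma$ to be dominated by $-2\mu x$. In other words, jamming of one copy only adds a nonpositive term to $\dot y$, so the harmonic contraction rate $2\mu$ is preserved rather than degraded; this is what makes the synchronous coupling succeed despite the boundary behavior.
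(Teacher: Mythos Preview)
Your proof of parts~(i) and~(ii) matches the paper's exactly. For part~(iii) the paper takes a different, more discrete route: it writes the explicit harmonic flow $\phi^\sigma_s(x) = x^*_\sigma + e^{-2\mu s}(x - x^*_\sigma)$ with $x^*_\sigma = v\sigma/(2\mu)$, observes that between consecutive jump times $T_k < T_{k+1}$ both $x$ and $\tilde x$ are given by $\max(0,\,\cdot\,)$ applied to this flow with the \emph{same} $\sigma$, and then uses that $\max(0,\,\cdot\,)$ is $1$-Lipschitz to get $|x(T_{k+1}) - \tilde x(T_{k+1})| \le e^{-2\mu(T_{k+1}-T_k)}|x(T_k) - \tilde x(T_k)|$, iterating over jump intervals. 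Your Gr\"onwall argument is a legitimate continuous-time alternative: it trades the explicit solution for a pointwise differential inequality, and your case analysis (especially the observation that jamming forces $v\sigma \le 0$, which is exactly the sign needed for the free copy's drift to be dominated by $-2\mu x$) is correct. The paper's approach sidesteps any discussion of differentiability at boundary-hitting times by working interval-by-interval with the closed-form flow; your approach would generalize more readily to potentials without an explicit flow.

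One small imprecision: you assert that ``the support of the dynamics is contained in $[0,v/\mu]$'' to justify Lipschitz continuity, but this is only true once the process has entered that interval; for $x_0 > v/\mu$ the trajectory starts outside it. The fix is immediate---$x(t) \le \max(x_0, v/\mu)$ for all $t$ since the drift is negative outside $[0,v/\mu]$---so boundedness (hence Lipschitz continuity of $x,\tilde x$ and thus of $|y|$) still holds, and the rest of your argument goes through.
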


\begin{proof} We couple the velocities $\sigma, \tilde \sigma$ like in the proof of Lemma \ref{lem:linear_synchronous_coupling}. For $\sigma = 2, 0, -2$ define $x^*_\sigma = \frac{v \sigma}{2 \mu}$ and recursively construct $x(t)$ by setting
\begin{align*}
x(t) &= \max\left(0, x^*_{\sigma_0} + e^{-2\mu t}\left(x_0 - x^*_{\sigma_0}\right)\right) \text{ for } t \in [0, T_1),\\
x(t) &= \max\left(0, x^*_{\sigma(T_1+)} + e^{-2\mu(t - T_1)}\left(x(T_1) - x^*_{\sigma(T_1+)}\right)\right) \text{ for } t \in [T_1, T_2),
\end{align*}
and so forth, where $0 = T_0 < T_1 < \cdots$ are the combined jump times of $\sigma$ and $\tilde \sigma$. We define $\tilde x(t)$ similarly and finally set $X(t) = (x(t), \sigma(t))$ and $\tilde X(t) = (\tilde x(t), \tilde \sigma(t))$. Assertions (i) and (ii) follow like in the proof of Lemma \ref{lem:linear_synchronous_coupling}.

(iii) Let $t \ge 0$ be such that $\sigma(s) = \tilde \sigma(s)$ for all $s \ge t$ and let $k \in \mathbb N$ be such that $T_k \le t \le T_{k+1}$. Then we have $\sigma(T_k) = \tilde \sigma(T_k)$ and hence
\begin{align*}
x(T_{k+1}) &= \max\left(0, x^*_{\sigma(T_k+)} + e^{-2\mu(T_{k+1} - t)} \left( x(t) - x^*_{\sigma(T_k+)}\right)\right), \\
\tilde x(T_{k+1}) &= \max\left(0, x^*_{\sigma(T_k+)} + e^{-2\mu(T_{k+1} - t)} \left( \tilde x(t) - x^*_{\sigma(T_k+)}\right)\right).
\end{align*}
So because $x \mapsto \max(0, x)$ is $1$-Lipschitz we have
\begin{align*}
\left| x(T_{k+1}) - \tilde x(T_{k+1})\right| &\le \Big| \left(x^*_{\sigma(T_k+)} + e^{-2\mu(T_{k+1} - t)} \left( x(t) - x^*_{\sigma(T_k+)}\right)\right) \\
& \qquad\qquad- \left(x^*_{\sigma(T_k+)} + e^{-2\mu(T_{k+1} - t)} \left( \tilde x(t) - x^*_{\sigma(T_k+)}\right)\right)\Big|, \\
&= e^{-2\mu(T_{k+1} - t)} \left| x(t) - \tilde x(t)\right|.
\end{align*}
Iterating the same computation shows $\left| x(T_l) - \tilde x(T_l)\right| \le e^{-2\mu (T_l - t)}|x(t) - \tilde x(t)|$ for all $l \ge k+1$. Applying it once more yields $|x(s) - \tilde x(s)| \le e^{-2\mu(s - t)} |x(t) - \tilde x(t)|$ for all $s \ge t$.
\end{proof}

We can now return to the proof of the main theorem of the section.

\begin{proof}[Proof of Theorem~\ref{thm:long_time_behavior_harmonic_process_wasserstein}]
	(i) Using the coupling of Lemma~\ref{lem:harmonic_synchronous_coupling} we construct the coupling
	$$
	\int \mathbb Q_{(x, \sigma), (\tilde x, \tilde \sigma)} d(\eta \otimes \tilde \eta)(x, \sigma, \tilde x, \tilde \sigma)
	$$
	with initial distribution $\eta \otimes \tilde \eta$. Fix $\beta \in (0, 1)$ to be optimized later and define the event
	$$
	A_t = \{\sigma(s) = \tilde \sigma(s) \text{ for all } s \ge t \}.
	$$
	Notice that $t \mapsto x(t)$ is non-increasing up to time $T = \inf \{t \ge 0 : x(t) \le v/\mu\}$ and $x(t) \le v/\mu$ for all $t \ge T$. Hence $0 \le x(t) \le \max(x(0), v/\mu)$ for all $t \ge 0$. Lemma~\ref{lem:harmonic_synchronous_coupling} (iii) implies
	\begin{align*}
	\mathbb E \left[ |x(t) - \tilde x(t)|^p 1_{A_{\beta t}}\right] &\le e^{-2\mu (1 - \beta)t  p} \mathbb E \left[ |x(\beta t) - \tilde x(\beta t)|^p \right] \\
	&\le e^{-2\mu (1 - \beta)t  p} \left[ \left(\int \max(x, v/\mu)^p d\eta(x, \sigma)\right)^\frac1p + \left(\int \max(x, v/\mu)^p d\tilde\eta(x, \sigma)\right)^\frac1p  \right]^p \\
	&\le e^{-2\mu (1 - \beta)t  p} \left( \mathcal F_q(\eta) + \mathcal F_q(\tilde \eta) \right)^p
	\end{align*}
	where
	\[
		\mathcal F_r(\nu) := \left(\int \max(x, v/\mu)^r d\nu(x, \sigma)\right)^\frac1r.
	\]
	Furthermore, if $s > 1$ is such that $\frac{p}{q} + \frac{1}{s} = 1$ then by Hölder's inequality
	\begin{align*}
	\mathbb E \left[ |x(t) - \tilde x(t)|^p 1_{A^c_{\beta t}} \right] &\le \mathbb E \left[ |x(t) - \tilde x(t)|^q\right]^{\frac{p}{q}} \mathbb P\left(A^c_{\beta t}\right)^{\frac{1}{s}} \\
	&\le\left( \mathcal F_q(\eta) + \mathcal F_q(\tilde \eta) \right)^p \left(2 e^{-2 \omega {\beta t}}\right)^\frac1s.
	\end{align*}
	
	Taking $\beta = \frac{p \mu}{\omega/s + p 
		\mu}$ leads to $2\mu (1 - \beta) p = 2\omega \beta/s = \frac{2 \omega p \mu /s}{\omega/s + p \mu}$ and thus
	\begin{align*}
	\mathcal W_p \left(\eta P_t, \tilde \eta P_t\right) &\le E \left[ |x(t) - \tilde x(t)|^p \right]^\frac1p + \mathbb P\left( \sigma(t) \neq \tilde \sigma(t) \right) \\
	&\le \left( 1 + 2^\frac1s\right)^\frac1p \left( \mathcal F_q(\eta) + \mathcal F_q(\tilde \eta) \right) \exp\left({- \frac{2\omega\mu/s}{\omega/s + p\mu} t}\right) + 2 e^{-2\omega t}.
	\end{align*}
	(ii) Because an instantaneous harmonic process started inside the compact set $[0, v/\mu] \times \Sigma$ stays inside that set almost surely, the Krylov–Bogolyubov argument ensures the existence of an invariant measure. Furthermore, almost surely, the process reaches \( [0, v/\mu] \times \Sigma \) and then stays inside that set indefinitely. This implies that the support of any invariant measure is contained in \( [0, v/\mu] \times \Sigma \). In particular, \( \mathcal F_q(\pi) \le \frac v\mu < +\infty \) for any invariant measure \( \pi \) and any \( q \ge 1 \). Hence the contractivity shown in assertion (i) implies the uniqueness of \( \pi \) and the bound for $\mathcal W_p\left(\eta P_t, \pi\right)$.
\end{proof}

\begin{Rem}\label{rem:instantaneous_harmonic_asymptoic_decay_rate}
Let \( (x, \sigma) \in E \) be fixed. One has \( \mathcal F_q(\delta_{(x, \sigma)}) = x < +\infty \) for all \( q \ge 1 \) hence, by Theorem~\ref{thm:long_time_behavior_harmonic_process_wasserstein}
\[
	\varliminf_{t \rightarrow +\infty} -\frac{1}{t} \log \mathcal W_p\left(\delta_{(x, \sigma)} P_t, \pi\right) \ge \frac{2\omega \mu /s}{\omega/s + p \mu} \ge \min \left( \frac\omega{p s}, \mu \right).
\]
Taking \( q \to +\infty \) then leads to \( \frac1s = 1 - \frac pq \to 1 \) and
\[
	\varliminf_{t \rightarrow +\infty} -\frac{1}{t} \log \mathcal W_p\left(\delta_{(x, \sigma)} P_t, \pi\right) \ge \min \left( \frac\omega{p}, \mu \right).
\]
\end{Rem}

\begin{Rem}
Exponential convergence towards the invariant measure in total variation distance could be obtained using Foster-Lyapunov techniques~\cite{meyn93,hairer11}. However, we do not expect the bounds to be quantitatively useful.
\end{Rem}

\section{Optimality of the convergence rate}\label{sec:optimality}

In this section, we show that the convergence rate of Section~\ref{subsec:convergence_rates_of_instantaneous_and_finite_linear process} is optimal (resp.~optimal in the regime where \( (1 + \alpha/\beta) (c/v) \) is bounded) for the instantaneous (resp.~finite) linear process. 

\subsection{Instantaneous linear process}\label{sec:instantaneous_linear_lower_bound}

In this section, we prove that for all $(x, \sigma) \in E$
$$
\varlimsup_{t \rightarrow +\infty} -\frac{1}{t} \log \tv{\delta_{(x, \sigma)} P_t - \pi} \le 4 \omega \frac{c^2}{v^2}
$$
thus showing that the lower bound for the exponential decay rate obtained in section \ref{subsec:convergence_rates_of_instantaneous_and_finite_linear process} can only be improved by a constant factor. \\

In order to do this, we want to exploit the fact that the process takes a `long time' to explore the parts of the state space where the inter-particle distance $x$ is large. The central observation is that, for any fixed $a > 0$, because
$$
\tv{\delta_{(x,\sigma)} P_t - \pi} \ge \pi\left(\{x > a t\}\right) - \mathbb P_{(x, \sigma)} \left( x(t) > at \right)
$$
we have that
\begin{equation}\label{eq:negligibility_condition}
\varliminf_{t\rightarrow +\infty} -\frac{1}{t} \log \mathbb P_{(x, \sigma)} \left( x(t) > at \right) > a\lambda_\pi
\end{equation}
implies
$$
\varlimsup_{t \rightarrow +\infty} -\frac{1}{t} \log \tv{\delta_{(x, \sigma)} P_t - \pi} \le a \lambda_\pi
$$
where \( \lambda_\pi \) is as in Notation~\ref{not:ldp}.

The lower the constant $a$, the sharper the upper bound $a \lambda_\pi$. Hence we want to take $a > 0$ as small as possible, while also ensuring that $\mathbb P_{(x, \sigma)} \left( x(t) > at \right)$ is negligible in the sense of~\eqref{eq:negligibility_condition}.

\begin{Rem}

The velocity of the process is bounded above by \( 2(v-c) \). Hence, for all \( a > 2(v-c) \), one has
\[
	\mathbb P_{(x, \sigma)} \left( x(t) > a t \right) = 0
\]
for large enough \( t \). This yields the bound
\[
	\varlimsup_{t \rightarrow +\infty} -\frac{1}{t} \log \tv{\delta_{(x, \sigma)} P_t - \pi} \le 2(v - c) \lambda_\pi = 4 \omega \frac c{v + c},
\]
but unfortunately does not capture the true decay rate.

\end{Rem}

Computing \( \mathbb P_{(x, \sigma)} \left( x(t) > at \right) \) involves solving a system of differential equations with independent variables \( x \) and \( t \). To simplify the calculation, we turn to the Laplace transform of a hitting time, reducing the problem to a system of differential equations with \( x \) as the sole independent variable.

\begin{Lem}\label{lem:Lambda_instantaneous_tumble} Set $\tau_L := \inf \{ t \ge 0: x(t) > L\}$. For all $u < 0$ and $(x, \sigma) \in E$ we have
$$
\lim_{L \rightarrow +\infty} \frac{1}{L} \log \mathbb E_{(x, \sigma)} \left[ e^{u \tau_L}\right] = \frac{c u - 2  c {\omega} - \sqrt{u^{2} v^{2} - 4  u v^{2} {\omega} + 4  c^{2} {\omega}^{2}}}{2  {\left(v^{2} - c^{2}\right)}} =: \Lambda(u).
$$
\end{Lem}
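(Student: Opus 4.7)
The plan is to bound $\mathbb E_{(x, \sigma)}[e^{u \tau_L}]$ from above and below by matching exponentials via sub- and super-martingale arguments, using a positive eigenvector of the tilted generator. Precisely, seek $\lambda > 0$ and a strictly positive vector $h = (h_\sigma)_{\sigma \in \Sigma}$ with $(\mathcal Q + u I + \lambda \mathcal V) h = 0$, where $\mathcal V = \mathrm{diag}(2(v - c), -2c, -2(v + c))$. Setting $s := u - 2\omega$, a direct expansion yields the factorisation
\[
\det(\mathcal Q + u I + \lambda \mathcal V) = -(2 c \lambda - s) \bigl[ (s + 2\lambda(v - c))(s - 2\lambda(v + c)) - 4 \omega^2 \bigr],
\]
with linear root $\lambda_0 = (u - 2\omega)/(2 c)$ and quadratic roots
\[
\lambda^\pm = \frac{c(2\omega - u) \pm \sqrt{\Delta}}{2(v^2 - c^2)}, \qquad \Delta := u^2 v^2 - 4 u v^2 \omega + 4 c^2 \omega^2.
\]
The identity $c^2(2\omega - u)^2 - \Delta = (v^2 - c^2) u (4\omega - u)$ shows that for $u < 0$ only $\lambda^+$ is positive, and comparison with the statement gives $\Lambda(u) = -\lambda^+$. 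The Perron eigenvalue of $\mathcal Q + u I$ equals $u < 0$, and since $\lambda^+$ is the unique positive root of the characteristic polynomial in $\lambda$, continuity forces the Perron eigenvalue of $\mathcal Q + u I + \lambda^+ \mathcal V$ to be exactly $0$; Perron--Frobenius applied to this irreducible Metzler matrix then produces the required $h > 0$.

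Set $g_\sigma(x) := h_\sigma e^{\lambda^+ x}$. By construction $(\mathcal L + u) g = 0$ in the bulk $x > 0$ and at $(0, 2)$, while at the jammed states the absence of the drift term gives $(\mathcal L + u) g(0, 0) = 2 c \lambda^+ h_0 > 0$ and $(\mathcal L + u) g(0, -2) = 2 (v + c) \lambda^+ h_{-2} > 0$. Hence $e^{u t} g(X_t)$ is a submartingale; optional stopping at $\tau_L \wedge T$ followed by dominated convergence (using $u < 0$ and $x(t) \le L$ on $\{t < \tau_L\}$) delivers
\[
\mathbb E_{(x, \sigma)}[e^{u \tau_L}] \ge \frac{h_\sigma}{h_2}\, e^{\lambda^+(x - L)}.
\]
For the matching upper bound, replace $g$ by $\tilde g := g + A$ with $A \ge \max(2 c \lambda^+ h_0,\, 2 (v + c) \lambda^+ h_{-2}) / (-u)$: then in the bulk $(\mathcal L + u) \tilde g = u A < 0$, while at the two jammed states the added $uA$ dominates the positive boundary contributions, so $(\mathcal L + u) \tilde g \le 0$ everywhere. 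Thus $e^{u t} \tilde g(X_t)$ is a supermartingale, and the symmetric argument gives $\mathbb E_{(x, \sigma)}[e^{u \tau_L}] \le (h_\sigma e^{\lambda^+ x} + A) / (h_2 e^{\lambda^+ L} + A)$. Dividing both bounds by $L$ and sending $L \to \infty$ yields the claimed limit $-\lambda^+(u) = \Lambda(u)$.

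The main technical obstacle is the Perron--Frobenius step, which rests on identifying $\lambda^+(u)$ as the value at which the Perron eigenvalue $\rho(\lambda)$ of the tilted Metzler matrix $\mathcal Q + u I + \lambda \mathcal V$ crosses $0$ (guaranteeing the associated eigenvector is strictly positive). This is handled by noting $\rho(\lambda)$ is continuous with $\rho(0) = u < 0$; if $\rho(\lambda^+) > 0$, the intermediate value theorem would yield some $\lambda_* \in (0, \lambda^+)$ with $\rho(\lambda_*) = 0$, so $0$ would be in the spectrum of $\mathcal Q + u I + \lambda_* \mathcal V$, contradicting the fact that $\lambda^+$ is the unique positive root of the characteristic polynomial. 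Hence $\rho(\lambda^+) \le 0$, and since $0$ is by construction an eigenvalue of $\mathcal Q + u I + \lambda^+ \mathcal V$, we conclude $\rho(\lambda^+) = 0$.
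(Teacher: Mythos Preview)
Your argument is correct and takes a genuinely different route from the paper. The paper solves the boundary-value problem for $f_\sigma(x)=\mathbb E_{(x,\sigma)}[e^{u\tau_L}]$ directly: it writes $F'=\mathcal A F$ with $\mathcal A=-\mathcal V^{-1}(\mathcal Q+uI)$, imposes the boundary conditions $f_2(L)=1$ and the two jammed-state relations at $x=0$, and then extracts the asymptotic rate from the expansion $\Gamma(L)=(1,0,0)e^{L\mathcal A}F_0$ by arguing that the coefficient of the unique positive eigenvalue $\lambda_2^+$ cannot vanish or be negative. Your approach instead identifies $\lambda^+$ via Perron--Frobenius as the unique $\lambda>0$ at which the Perron root of the irreducible Metzler matrix $\mathcal Q+uI+\lambda\mathcal V$ crosses zero, and then sandwiches $\mathbb E_{(x,\sigma)}[e^{u\tau_L}]$ between two explicit exponentials using the sub/super-martingale pair $e^{ut}g(X_t)$ and $e^{ut}(g+A)(X_t)$ together with optional stopping. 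What the paper's method buys is an exact expression for $\mathbb E_{(x,\sigma)}[e^{u\tau_L}]$ at finite $L$ (not just the log-asymptotic) at the cost of handling the boundary conditions at $x=0$ explicitly; what your method buys is that it bypasses those boundary conditions entirely and is more readily portable to other tumbling mechanisms (indeed, the paper has to abandon the exact-ODE approach for the finite linear process in the next subsection for precisely this reason). One minor point worth making explicit in your write-up: the identity $g(X_{\tau_L})=h_2e^{\lambda^+ L}$ uses that $\sigma(\tau_L)=2$ almost surely, which holds because the position can only increase while $\sigma=2$ and the exponential holding times make coincidence with a jump time a null event.
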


We postpone the proof of this lemma until the end of the section.

\begin{Lem} \label{lem:instantaneous_linear_exponential_decay_rate_upper _bound}
For all $a > \frac{1}{\Lambda'(0)}$ we have
$$
\varliminf_{t \rightarrow +\infty} -\frac{1}{t} \log \mathbb P_{(x, \sigma)} \left( x(t) > a t \right) > a \lambda_\pi
$$
for all $(x, \sigma) \in E$. Furthermore
$$
\varlimsup_{t \rightarrow +\infty} -\frac{1}{t} \log \tv{\delta_{(x, \sigma)} P_t - \pi} \le 4 \omega \frac{c^2}{v^2}.
$$

\end{Lem}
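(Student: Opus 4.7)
The plan is to exploit that the process is slow to reach high values of $x$, combining a Chernoff bound on the hitting time with the explicit tail of $\pi$. Since $\{x(t) > at\} \subset \{\tau_{at} \le t\}$, Markov's inequality gives for every $u > 0$
\begin{equation*}
\mathbb P_{(x, \sigma)}\bigl(x(t) > at\bigr) \le e^{ut} \, \mathbb E_{(x, \sigma)}\bigl[e^{-u \tau_{at}}\bigr],
\end{equation*}
and Lemma~\ref{lem:Lambda_instantaneous_tumble} yields $\tfrac{1}{at}\log \mathbb E_{(x, \sigma)}[e^{-u \tau_{at}}] \to \Lambda(-u)$ as $t \to \infty$. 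Writing $w = -u < 0$, this gives
\begin{equation*}
\varliminf_{t \to \infty} -\tfrac{1}{t} \log \mathbb P_{(x, \sigma)}(x(t) > at) \ge w - a \Lambda(w) \quad \text{for every } w < 0.
\end{equation*}

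The first assertion reduces to finding some $w < 0$ with $w - a\Lambda(w) > a\lambda_\pi$. The crucial observation is the identity $\Lambda(w) \to -\lambda_\pi$ as $w \to 0^-$, obtained by direct substitution into the formula of Lemma~\ref{lem:Lambda_instantaneous_tumble} and comparison with $\lambda_\pi = 2c\omega/(v^2-c^2)$ from Proposition~\ref{prop:instantaneous_linear_invariant_measure}. Consequently the function $g(w) := w - a\Lambda(w) - a\lambda_\pi$ satisfies $\lim_{w \to 0^-} g(w) = 0$ and has left-derivative $1 - a\Lambda'(0)$ at $0$, which is strictly negative under the hypothesis $a > 1/\Lambda'(0)$. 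Hence $g(w) > 0$ for $w$ slightly less than $0$, which establishes the first statement.

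For the total variation bound, testing against the set $\{x > at\} \times \Sigma$ yields
\begin{equation*}
\tv{\delta_{(x, \sigma)} P_t - \pi} \ge \pi\bigl(\{x > at\} \times \Sigma\bigr) - \mathbb P_{(x, \sigma)}\bigl(x(t) > at\bigr).
\end{equation*}
From Proposition~\ref{prop:instantaneous_linear_invariant_measure}, $\pi(\{x > at\} \times \Sigma) = C e^{-at \lambda_\pi}$ for some positive $C$, while the first part supplies $\epsilon > 0$ and $t_0$ such that $\mathbb P_{(x,\sigma)}(x(t) > at) \le e^{-t(a \lambda_\pi + \epsilon)}$ for $t \ge t_0$. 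Therefore $\tv{\delta_{(x,\sigma)} P_t - \pi} \ge \tfrac{C}{2} e^{-at \lambda_\pi}$ for large $t$, and $\varlimsup -\tfrac{1}{t} \log \tv{\delta_{(x,\sigma)} P_t - \pi} \le a \lambda_\pi$. Choosing $a = 2c(v^2-c^2)/v^2$, a routine computation of $\Lambda'(0)$ from the explicit formula confirms $a > 1/\Lambda'(0)$ and gives $a \lambda_\pi = 4\omega c^2/v^2$.

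The main technical obstacle is the identity $\Lambda(w) \to -\lambda_\pi$ as $w \to 0^-$, which is what pins the threshold in the first part to exactly $1/\Lambda'(0)$ and makes the derivative test at $0$ work. This is not coincidental: the exponential cost of reaching large $x$ is the same in both the hitting-time Laplace transform and the stationary tail, which is why the resulting upper bound on the mixing rate lines up, up to a universal constant, with the lower bound of Theorem~\ref{thm:long_time_behavior_linear_process}.
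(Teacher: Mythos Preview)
Your proof is correct and follows essentially the same approach as the paper: a Chernoff bound on $\tau_{at}$ combined with Lemma~\ref{lem:Lambda_instantaneous_tumble}, the identity $\Lambda(0^-)=-\lambda_\pi$, and a derivative test at $w=0$. The only cosmetic difference is in extracting the final constant: the paper takes the infimum over all admissible $a>1/\Lambda'(0)$ to obtain $\lambda_\pi/\Lambda'(0)=4\omega c^2/(c^2+v^2)$ and then relaxes to $4\omega c^2/v^2$, whereas you pick the single value $a=2c(v^2-c^2)/v^2$ that hits $4\omega c^2/v^2$ directly; both are valid and yield the statement as written.
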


\begin{proof} Let $(x, \sigma) \in E$ be given. For all $u < 0$
\begin{align*}
\mathbb P_{(x, \sigma)} \left(x(t) > a t\right) &\le \mathbb P_{(x, \sigma)} \left( \tau_{at} \le t \right) \le e^{-ut} \mathbb E_{(x, \sigma)} \left[ e^{u \tau_{a t}} \right]
\end{align*}
so that by Lemma~\ref{lem:Lambda_instantaneous_tumble} we have
$$
\varlimsup_{t \rightarrow +\infty} \frac{1}{t} \log \mathbb P_{(x, \sigma)} \left(x(t) > a t\right) \le  a \Lambda(u) - u =: G(u).
$$

Because $G(0) = -a\lambda_\pi$ if $G'(0) > 0 \iff a > \frac{1}{\Lambda'(0)}$ then
$$
a \Lambda(u) - u < -a \lambda_\pi
$$
when $u \rightarrow 0-$. Thus
$$
\frac{\lambda_\pi}{\Lambda'(0)} = 4 \omega \frac{c^2}{c^2 + v^2} \le 4 \omega \frac{c^2}{v^2}.
$$
\end{proof}

\begin{proof}[Proof of Lemma~\ref{lem:Lambda_instantaneous_tumble}] Let $\omega, c, v > 0$, $u < 0$ and $(x_0, \sigma_0) \in E$ be given. Fix $L > x_0$ and set \mbox{$f_\sigma(x) = \mathbb E_{(x, \sigma)} \left[e^{u \tau_L}\right]$} for $\sigma \in \Sigma$ and $x \in [0, L]$. Further define $F(x) = (f_2(x), f_0(x), f_{-2}(x))^t$. We know that
	$$
	\mathcal V F' + \mathcal Q F + u F =  0 \iff F' = \mathcal A F
	$$
	on $(0, L)$ where $\mathcal A = - \mathcal V^{-1} \left(\mathcal Q + u I\right)$. Together with the boundary conditions $f_2(L) = 1$ and
	$$
	\left.\begin{array}{r}
	\omega f_2(0) + \omega f_{-2}(0) - 2\omega f_0(0) + u f_0(0) = 0\\
	2\omega f_0(0) - 2\omega f_{-2}(0) + u f_{-2}(0) = 0
	\end{array}\right\} \iff
	\begin{pmatrix}
	f_2(0) \\ f_0(0) \\ f_{-2}(0)
	\end{pmatrix}
	=
	f_{-2}(0) \underbrace{\begin{pmatrix}
		1 - \frac{2  u}{\omega} + \frac{u^{2}}{2  {\omega}^{2}} \\
		1 -\frac{u}{2  {\omega}} \\
		1
		\end{pmatrix}}_{=: F_0}
	$$
	this fully determines $F$. \\
	
	We deduce
	$$
	F(x_0) = f_{-2}(0) e^{x_0 \mathcal A} F_0
	$$
	where $f_{-2}(0)$ depends on $L$ but $e^{x_0 \mathcal A} F_0$ does not. Hence $\lim_L \frac{1}{L} \log \mathbb E_{(x_0, \sigma_0)} \left[ e^{u \tau_L} \right] = \lim_L \frac{1}{L} \log f_{-2}(0)$. \\
	
	Furthermore the boundary condition $f_2(L)=1$ can be rewritten as
	$$
	f_{-2}(0) \underbrace{\left[ (1, 0, 0) e^{L \mathcal A} F_0 \right]}_{:= \Gamma(L)} = 1.
	$$
	
	The characteristic polynomial $P_{\mathcal A}$ of $\mathcal A$ satisfies $-\det(\mathcal V)  P_\mathcal A = P_1 P_2$ where
	$$
	P_1 = 2 c X - u + 2  {\omega} \text{ and } P_2 = 4  {\left(c^{2} - v^{2}\right)} X^{2} - 4  c{\left(u - 2 {\omega}\right)} X + u^{2} - 4  u {\omega}.
	$$
	$P_1 P_2$ is a cubic polynomial with discriminant
	$$
	\Delta = 256  {\left(u^{2} v^{2} - 4  u v^{2} {\omega} + 4  c^{2} {\omega}^{2} + 4  v^{2} {\omega}^{2}\right)}^{2} {\left(u^{2} v^{2} - 4  u v^{2} {\omega} + 4  c^{2} {\omega}^{2}\right)}
	$$
	so whenever $u < 0$ we have $\Delta > 0$ and $P_\mathcal A$ has three distinct real roots. Furthermore, when $u < 0$ the root of $P_1$ is negative and $P_2$ has one negative and one positive root, because its leading coefficient and constant term have opposite signs. \\
	
	Let $\lambda_1$ be the negative root of $P_1$ and $\lambda_2^+, \lambda_2^-$ be the positive and negative root of $P_2$ respectively. Because all eigenvalues of $\mathcal A$ are distinct, there exist $a_1, a_2^+, a_2^- \in \mathbb R$ that do not depend on $L$ such that
	$$
	\Gamma(L) = a_1 e^{\lambda_1 L} + a_2^+ e^{\lambda_2^+ L} + a_2^- e^{\lambda_2^- L}.
	$$
	
	Assume by contradiction that $a^+_2 < 0$. This would imply $\Gamma(L) \rightarrow -\infty$ and hence $f_{-2}(0) < 0$ for large~$L$, which is absurd. Assume by contradiction that $a^+_2 = 0$ then $|\Gamma(L)| \rightarrow 0$ and hence $|f_{-2}(0)| \rightarrow +\infty$ which is also absurd. Hence $a_2^+ > 0$ and
	\[
		\lim_L \frac{1}{L} \log \mathbb E_{(x_0, \sigma_0)} \left[ e^{u \tau_L} \right] = \lim_L \frac{1}{L} \log \left(\frac1{\Gamma(L)} \right) = -\lambda^+_2 = \frac{c u - 2  c {\omega} - \sqrt{u^{2} v^{2} - 4  u v^{2} {\omega} + 4  c^{2} {\omega}^{2}}}{2  {\left(v^{2} - c^{2}\right)}}.
	\]
\end{proof}

\subsection{Finite linear process}

In this section, we show
$$
\varlimsup_{t \rightarrow +\infty} -\frac{1}{t} \log \tv{\delta_{(\sigma, x)} P_t - \pi} \le 4 \alpha \left(1 + \frac{\alpha}{\beta}\right) \frac{c^2}{v^2} \text{ for all } (x, \sigma) \in E.
$$
As in the previous section, we want to exploit the fact that the process is slow to explore  regions of the state space where the inter-particle distance $x$ is large. We aim again to utilize the fact that~\eqref{eq:negligibility_condition} implies
\begin{align*}
	\varlimsup_t - \frac{1}{t} \log \tv{\delta_{(x, \sigma)} P_t - \pi} \le a \lambda_\pi.
\end{align*}
The following lemma is the first step to make this upper bound quantitative.

\begin{Lem} \label{lem:finite_tumble_invariant_measure_exponential_decay_rate}
We have $\lambda_\pi = -\zeta_2$ (see Notation~\ref{not:zeta2_zeta3_piQ}).
\end{Lem}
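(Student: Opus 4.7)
The plan is to start from the explicit form of \( \pi \) in Proposition~\ref{prop:finite_linear_invariant_measure}. Since the Dirac masses at \( x = 0 \) contribute nothing to \( \pi(\{x > L\}) \) for \( L > 0 \), integrating the density part of the \( x \)-marginal yields
\[
\pi(\{x > L\}) = \begin{cases} -\dfrac{c_2 A_2}{\zeta_2}\, e^{\zeta_2 L} & \text{if } c < v \le 2c, \\[1ex] -\dfrac{c_2 A_2}{\zeta_2}\, e^{\zeta_2 L} - \dfrac{c_3 A_3}{\zeta_3}\, e^{\zeta_3 L} & \text{if } v > 2c, \end{cases}
\]
where \( A_i := \sum_{\sigma \in \Sigma} a_\sigma(\zeta_i) \). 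In the first case, \( \pi(\{x > L\}) > 0 \) immediately forces \( -c_2 A_2/\zeta_2 > 0 \), and \( \lambda_\pi = -\zeta_2 \) follows.

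For the case \( v > 2c \), two additional facts are needed: that \( \zeta_2 > \zeta_3 \) (so that \( e^{\zeta_2 L} \) is the slower-decaying exponential) and that the coefficient \( -c_2 A_2 / \zeta_2 \) is strictly positive. For the ordering, Lemma~\ref{lem:roots_of_P2_and_P3}~(vi) ensures that \( \zeta_2(v) \) and \( \zeta_3(v) \) never coincide for \( v > 2c \), and they depend continuously on \( v \). As \( v \to 2c^+ \), the leading coefficient \( c(8c^2 - 2v^2) \) of \( P_3 \) tends to \( 0^- \) and a dominant-balance calculation (the dominant terms in \( P_3(-R) \) being \( 8c^2(v-2c) R^3 \) and \( -8(\alpha+\beta)c^2 R^2 \)) gives \( \zeta_3 \sim -(\alpha + \beta)/(v - 2c) \to -\infty \), whereas \( \zeta_2 \) remains bounded. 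Hence \( \zeta_3 < \zeta_2 < 0 \) for \( v \) near \( 2c^+ \), and by continuity together with the non-coincidence from Lemma~\ref{lem:roots_of_P2_and_P3}~(vi), this ordering persists for all \( v > 2c \).

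For the positivity of the coefficient, the density \( c_2 a_\sigma(\zeta_2) e^{\zeta_2 x} + c_3 a_\sigma(\zeta_3) e^{\zeta_3 x} \) of \( \pi_\sigma \) on \( (0, +\infty) \) must be non-negative for each \( \sigma \in \Sigma \). Dividing by \( e^{\zeta_2 x} \) and letting \( x \to +\infty \) (using \( \zeta_2 > \zeta_3 \)) gives \( c_2 a_\sigma(\zeta_2) \ge 0 \) for every \( \sigma \), so \( c_2 A_2 \ge 0 \). From Lemma~\ref{lem:eigenvector}, the entry \( a_2(\zeta_2) = -(3c\zeta_2 - 2\alpha - \beta)\alpha^2\beta^2 \) is strictly positive since \( \zeta_2 < 0 \), and \( c_2 \ne 0 \) by the explicit formula in Proposition~\ref{prop:finite_linear_invariant_measure}. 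Hence at least one of the non-negative summands \( c_2 a_\sigma(\zeta_2) \) is strictly positive, so \( c_2 A_2 > 0 \) and \( \lambda_\pi = -\zeta_2 \).

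The main obstacle is the ordering \( \zeta_2 > \zeta_3 \) in the subcase \( v > 2c \): unlike the sign statements in Lemma~\ref{lem:roots_of_P2_and_P3}, it does not follow from elementary inspection of the coefficients of \( P_2 \) and \( P_3 \) and requires the perturbative/continuity argument above (evaluating \( P_2(\zeta_3) \) directly would require polynomial division of \( P_3 \) by \( P_2 \) and a sign analysis of the remainder, which seems considerably more painful).
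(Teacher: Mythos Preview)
Your proof is correct and follows essentially the same route as the paper: compute \( \pi(\{x > L\}) \) from the explicit invariant measure of Proposition~\ref{prop:finite_linear_invariant_measure}, and in the case \( v > 2c \) establish the ordering \( \zeta_3 < \zeta_2 \) via continuity in the parameters combined with the non-coincidence statement of Lemma~\ref{lem:roots_of_P2_and_P3}~(vi). The only differences are in the details: the paper runs the continuity argument over the full four-dimensional parameter space \( (\alpha,\beta,c,v) \) (using the implicit function theorem and the intermediate value theorem along a segment) and then checks the ordering at an unspecified particular parameter point, whereas you fix \( (\alpha,\beta,c) \), vary \( v \), and anchor the ordering by the asymptotic \( \zeta_3 \sim -(\alpha+\beta)/(v-2c) \to -\infty \) as \( v \to 2c^+ \). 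Your version is also slightly more careful in that you explicitly verify the coefficient \( -c_2 A_2/\zeta_2 \) of the dominant exponential is strictly positive (via non-negativity of each \( \pi_\sigma \) and the explicit sign of \( a_2(\zeta_2) \)), a point the paper leaves implicit.
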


\begin{proof} If $v \le 2c$ then Proposition~\ref{prop:finite_linear_invariant_measure} implies $\pi \left( \{ x > t \} \right) = a_2 e^{\zeta_2 t}$ for some $a_2 > 0$ and the result follows immediately. If $v > 2c$ then Proposition~\ref{prop:finite_linear_invariant_measure} implies $\pi \left( \{ x > t \} \right) = a_2 e^{\zeta_2 t} + a_3 e^{\zeta_3 t}$ for some $a_2, a_3 \in \mathbb R$ so it suffices to show that $\zeta_3 < \zeta_2$. \\

It follows from the implicit function theorem that $\zeta_2$ and $\zeta_3$ are continuously differentiable functions of $(\alpha, \beta, c, v)$ (the invertibility condition in the implicit function theorem is guaranteed by the fact that $P_2$ and $P_3$ have simple roots as shown in Lemma~\ref{lem:roots_of_P2_and_P3}). Assume by contradiction that there exist two sets of parameters $\Theta_0 = (\alpha_0, \beta_0, c_0, v_0)$ and $\Theta_1 = (\alpha_1, \beta_1, c_1, v_1)$ such that $\zeta_2(\Theta_0) < \zeta_3(\Theta_0)$ and $\zeta_2(\Theta_1) > \zeta_3(\Theta_1)$. Applying the intermediate value theorem to the function $t \mapsto \zeta_2(\Theta_0 + t(\Theta_1 - \Theta_0)) - \zeta_3(\Theta_0 + t(\Theta_1 - \Theta_0))$ yields that there exists $t^* \in (0, 1)$ such that $\zeta_2(\Theta_0 + t^*(\Theta_1 - \Theta_0)) = \zeta_3(\Theta_0 + t^*(\Theta_1 - \Theta_0))$ which contradicts Lemma~\ref{lem:roots_of_P2_and_P3}. Hence the ordering of $\zeta_2$ and $\zeta_3$ is always the same and considering any particular choice of parameters shows that we always have $\zeta_3 < \zeta_2$.
\end{proof}

We now estimate $\lim_t \frac{1}{t} \log \mathbb P_{(x, \sigma)} \left( x(t) > a t \right)$ to find a small $a > 0$ such that~\eqref{eq:negligibility_condition} holds. One could try to mimic the proof of Section~\ref{sec:instantaneous_linear_lower_bound} by computing
$$
\lim_{L \rightarrow +\infty} \frac{1}{L} \log \mathbb E_{(x, \sigma)} \left[ e^{u \tau_L}\right]
$$
but the size of the systems of differential equations involved makes this challenging. To overcome this difficulty, we exploit the fact that the velocity $\sigma$ is the difference of two simpler independent Markov jump processes. This allows us to reduce the size of the systems of differential equations in our computations.

\begin{Def}[Single velocity coupling]\label{def:relative_particle_single_particle_coupling} Let $(x_0, \sigma_0) \in [0, +\infty) \times \Sigma$ be given and define
$$
f(\sigma_1, \sigma_2) = \left\{
\begin{array}{cl}
\sigma_2 - \sigma_1 &\text{if } \sigma_1 \ne \sigma_2, \\
0_\pm & \text{if } \sigma_1 = \sigma_2 = \pm 1, \\
0_0 & \text{if } \sigma_1 = \sigma_2 = 0.
\end{array}
\right.
$$

Let $\sigma_1$ and $\sigma_2$ be independent Markov jump processes with the rates of Figure~\ref{fig:finite_tumble_single_particle_velocity_transition_rates} and initial states $\sigma_{1, 0}$ and $\sigma_{2, 0}$ such that $f(\sigma_{1, 0}, \sigma_{2, 0}) = \sigma_0$. This makes $\sigma(t) = f(\sigma_1(t), \sigma_2(t))$ a Markov jump process following figure \ref{fig:finite_tumble_relative_particle_velocity_transition_rates} with initial state $\sigma_0$. \\
	
Set $x(0) = x_0$ and $y_1(0) = y_2(0) = \frac{x_0}{2}$ and recursively define
\begin{align*}
x(t) &= \max\left[0, x(T_n) + (-2c + v \sigma(T_n)) (t - T_n)\right], \\
y_1(t) &= \max\left[0, y_1(T_n) + (-c + v \sigma_1(T_n)) (t - T_n)\right], \\
y_2(t) &= \max\left[0, y_2(T_n) + (-c - v \sigma_2(T_n))(t - T_n)\right],
\end{align*}	
for $t \in [T_n, T_{n+1})$ where $0 = T_0 < T_1 < \cdots$ are the combined jump times of $\sigma_1, \sigma_2$ and $\sigma$.
\end{Def}

The process $X(t) = (x(t), \sigma(t))$ constructed in Definition~\ref{def:relative_particle_single_particle_coupling} is clearly a finite linear process and the two auxiliary processes $y_1(t)$ and $y_2(t)$ provide a useful upper bound as shown by the following lemma.

\begin{Lem}[Order preservation] If $x(t), y_1(t)$ and $y_2(t)$ are as in Definition~\ref{def:relative_particle_single_particle_coupling} then
\[
	x(t) \le y_1(t) + y_2(t)
\]
for all $t \ge 0$ almost surely.
\end{Lem}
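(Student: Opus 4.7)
The plan is an induction on the jump index $n$, establishing the bound $x(t) \le y_1(t) + y_2(t)$ on each interval $[T_n, T_{n+1})$. The base case $n = 0$ is immediate from the initial conditions $x(0) = x_0 = y_1(0) + y_2(0)$.

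For the inductive step, I would fix $n$, assume $x(T_n) \le y_1(T_n) + y_2(T_n)$, and work on the open interval $(T_n, T_{n+1})$ where the velocity components $\sigma$, $\sigma_1$, $\sigma_2$ are constant. Each process then has the form $\max\bigl(0, a + b(t - T_n)\bigr)$, with $a$ its value at $T_n$ and $b$ its free-flow slope. The core of the argument is a velocity compatibility identity comparing the free-flow slope of $x$, namely $v_x := -2c + v \sigma(T_n)$, to the sum of the free-flow slopes of $y_1$ and $y_2$, namely $v_1 + v_2 := \bigl(-c + v\sigma_1(T_n)\bigr) + \bigl(-c - v \sigma_2(T_n)\bigr)$. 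This compatibility is obtained by a direct case analysis over the values of $(\sigma_1(T_n), \sigma_2(T_n))$, using $\sigma = f(\sigma_1, \sigma_2)$ together with the convention $v \cdot 0_\pm = v \cdot 0_0 = 0$.

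Once the velocity compatibility is in hand, the inductive hypothesis gives the un-reflected inequality
\[
x(T_n) + v_x (t - T_n) \le \bigl(y_1(T_n) + v_1 (t - T_n)\bigr) + \bigl(y_2(T_n) + v_2 (t - T_n)\bigr),
\]
and the elementary sub-additivity $(u + w)^+ \le u^+ + w^+$ of the positive-part function yields $x(t) \le y_1(t) + y_2(t)$ on $[T_n, T_{n+1})$. Continuity of $t \mapsto x(t), y_1(t), y_2(t)$ at $T_{n+1}$ transfers the inequality across the jump, closing the induction. The hardest part should be verifying the velocity compatibility identity across all values of $(\sigma_1, \sigma_2)$: once that is done, the rest reduces to one algebraic step, and the sub-additivity of $(\cdot)^+$ cleanly absorbs the reflection at the jamming boundaries $\{x = 0\}$ and $\{y_i = 0\}$ without needing to track separately when each process touches zero.
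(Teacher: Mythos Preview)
Your strategy is correct and in fact cleaner than the paper's, but be careful with the sign in the velocity identity. With the slopes you copied from Definition~\ref{def:relative_particle_single_particle_coupling}, namely $v_1 = -c + v\sigma_1$ and $v_2 = -c - v\sigma_2$, one gets
\[
v_1 + v_2 = -2c + v(\sigma_1 - \sigma_2) = -2c - v\sigma \neq v_x,
\]
so the ``compatibility identity'' fails (take $\sigma_1=-1$, $\sigma_2=1$: then $v_x = 2v-2c > 0$ while $v_1+v_2 = -2v-2c < 0$). This is a sign slip in the Definition itself; the paper's own proof uses $\partial_t y_i = -c + (-1)^i v\sigma_i$, i.e.\ $v_1 = -c - v\sigma_1$ and $v_2 = -c + v\sigma_2$, under which $v_1+v_2 = -2c + v(\sigma_2-\sigma_1) = v_x$ identically. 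With that correction your case analysis collapses to a one-line identity, and the rest of your argument goes through verbatim.

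On the comparison: the paper refines the time axis further into subintervals on which \emph{all three} processes have constant derivative (so that hitting times of $0$ for $x$ and for each $y_i$ become additional breakpoints), and then argues by a dichotomy on whether $x$ is currently at the boundary. Your use of the closed form $z(t) = (z(T_n) + v_z(t-T_n))^+$ on the coarser partition $[T_n,T_{n+1})$, followed by monotonicity and sub-additivity of $(\cdot)^+$, replaces that case split by a single inequality and avoids tracking the boundary-hitting times altogether. Both arguments rest on the same velocity identity $v_x = v_1 + v_2$; yours just packages the boundary behaviour more efficiently.
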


\begin{proof} We can decompose $[0, +\infty)$ into a sequence of intervals $[t_0, t_1], [t_1, t_2], \ldots$ such that both $x$ and $y_i$ have constant derivative on $[t_k, t_{k+1}]$. By induction, it suffices to prove that \mbox{$x(t_k) \le y_1(t_k) + y_2(t_k)$} implies $x(t) \le y_1(t) + y_2(t)$ for all $t \in [t_k, t_{k+1}]$. \\
	
Recall that, for $t \in [t_k, t_{k+1}]$
\begin{itemize}
\item if $x(t_k) > 0$ then $\partial_t x(t) = -2c + v(\sigma_2(t_k) - \sigma_1(t_k))$,
\item if $x(t_k) = 0$ and $-2c + v(\sigma_2(t_k) - \sigma_1(t_k)) \ge 0$ then $\partial_t x(t) = -2c + v(\sigma_2(t_k) - \sigma_1(t_k))$,
\item if $x(t_k) = 0$ and $-2c + v(\sigma_2(t_k) - \sigma_1(t_k)) < 0$ then $\partial_t x(t) = 0$,
\end{itemize}
and similarly
\begin{itemize}
	\item if $y_i(t_k) > 0$ then $\partial_t y_i(t) = -c + v(-1)^i \sigma_i(t_k)$,
	\item if $y_i(t_k) = 0$ and $-c + v(-1)^i \sigma_i(t_k) \ge 0$ then $\partial_t y_i(t) = -c + v(-1)^i \sigma_i(t_k)$,
	\item if $y_i(t_k) = 0$ and $-c + v(-1)^i \sigma_i(t_k) < 0$ then $\partial_t y_i(t) = 0$.
\end{itemize}

\textbf{Case 1.} If $\partial_t x = -2c + \sigma_2(t_k) - \sigma_1(t_k)$ then, by the preceding remark, we necessarily have
$\partial_t x \le \partial_t y_1 + \partial_t y_2$ so $x(t_k) \le y_1(t_k) + y_2(t_k)$ implies $x(t) \le y_1(t) + y_2(t)$ for all $t \in [t_k, t_{k+1}]$. \\

\textbf{Case 2.} If $\partial_t x > -2c + \sigma_2 - \sigma_1$ then $x(t) = 0$ for all $t \in [t_k, t_{k+1}]$. We always have $y_i \ge 0$ so this implies $x(t) \le y_1(t) + y_2(t)$ for all $t \in [t_k, t_{k+1}]$.
\end{proof}

The following lemma will play the same role as Lemma~\ref{lem:Lambda_instantaneous_tumble}.

\begin{Lem}\label{lem:formula_for_Lambda_1} Set $\tau^{(i)}_L := \inf \{ s \ge 0 : y_i(s) \ge L \}$. There exists $U = U(\alpha, \beta, c, v) < 0$ such that for all $u \in (U, 0)$ and all $(y_i, \sigma_i) \in \mathbb R_+ \times \{1, 0, -1\}$ we have
	$$
	\lim_L \frac{1}{L} \log \mathbb E_{(y_i, \sigma_i)} \left[ e^{u \tau_L^{(i)}} \right] = - \max \left(\text{\normalfont spec}~\mathcal A_1(u)\right) =: \Lambda_1(u)
	$$
	where $\mathcal A_1(u) = - \mathcal V_1^{-1}\left(\mathcal Q_1 + uI\right)$ and
	$$
	\mathcal V_1 = \begin{pmatrix}
	-c + v & 0 & 0\\
	0 & -c & 0 \\
	0 & 0 & -c -v
	\end{pmatrix}, \quad
	\mathcal Q_1 = \begin{pmatrix}
	-\alpha & \alpha & 0 \\
	\beta/2 & -\beta & \beta/2 \\
	0 & \alpha& - \alpha
	\end{pmatrix}.
	$$
\end{Lem}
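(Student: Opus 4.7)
The plan is to follow the template of the proof of Lemma~\ref{lem:Lambda_instantaneous_tumble}, adapting it to the three-dimensional single-velocity setting $\{-1,0,1\}$. First I would reduce the two assertions ($i=1$ and $i=2$) to a single one by symmetry: since $\mathcal Q_1$ is invariant under $\sigma \leftrightarrow -\sigma$, the process $\sigma_2$ has the same law as $-\sigma_2$, and the drift of $y_2$ equals $-c+v(-\sigma_2)$, which is the drift of $y_1$ read with the flipped velocity. Hence $\mathbb E_{(y,\sigma)}[e^{u\tau_L^{(2)}}]=\mathbb E_{(y,-\sigma)}[e^{u\tau_L^{(1)}}]$, and because the $\sigma\leftrightarrow-\sigma$ permutation similarity fixes $\mathrm{spec}\,\mathcal A_1(u)$, the target limit is unchanged. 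So I reduce to $i=1$.

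Fix $u<0$, $L>y_0$, and set $f_\sigma(y):=\mathbb E_{(y,\sigma)}[e^{u\tau_L^{(1)}}]$, $F=(f_1,f_0,f_{-1})^t$. Applying Dynkin's formula to the bounded martingale $t\mapsto e^{ut}f_{\sigma_1(t)}(y_1(t))$ stopped at $\tau_L^{(1)}$ yields $\mathcal V_1 F'(y)+(\mathcal Q_1+uI)F(y)=0$ on $(0,L)$, equivalently $F'=\mathcal A_1(u)F$. The boundary conditions come from the jamming at $y=0$ and the definition of $\tau_L^{(1)}$: one has $f_1(L)=1$, while at the sticky states $(0,0)$ and $(0,-1)$, where the drift indicator in the generator vanishes, $\mathcal L f+uf=0$ reduces to the algebraic relations $(u-\beta)f_0(0)+\tfrac{\beta}{2}(f_1(0)+f_{-1}(0))=0$ and $(u-\alpha)f_{-1}(0)+\alpha f_0(0)=0$. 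These pin down $(f_0(0),f_{-1}(0))$ as explicit multiples of $f_1(0)$, so $F(0)=f_1(0)\,F_0(u)$ with $F_0(u)$ an explicit vector whose first coordinate is $1$. Setting $\Gamma(L):=[e^{L\mathcal A_1(u)}F_0(u)]_1$, the constraint $f_1(L)=1$ gives $f_1(0)=1/\Gamma(L)$, and $F(y_0)=f_1(0)\,e^{y_0\mathcal A_1(u)}F_0(u)$.

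The core step is the spectral analysis of $\mathcal A_1(u)$. At $u=0$, a direct expansion shows
\[
\det(\lambda I-\mathcal A_1(0))=-\frac{\lambda}{\det\mathcal V_1}\bigl[c(v^2-c^2)\lambda^2+(\beta(v^2-c^2)-2\alpha c^2)\lambda-\alpha c(\alpha+\beta)\bigr],
\]
and (for $v>c$) the quadratic factor has positive leading coefficient and negative constant term, hence exactly one positive and one negative real root. Thus $\mathcal A_1(0)$ has three simple real eigenvalues $\mu_-^{(0)}<\mu_0^{(0)}=0<\mu_+^{(0)}$. By the analytic dependence of simple eigenvalues on $u$, there exists $U<0$ such that for all $u\in(U,0)$ the spectrum of $\mathcal A_1(u)$ still consists of three simple real eigenvalues $\mu_-(u)<\mu_0(u)<\mu_+(u)$ with $\mu_+(u)>0$, and a first-order perturbation at $\mu_0^{(0)}=0$ (using the left eigenvector $\pi_1$ and right eigenvector $(1,1,1)^t$) gives $\mu_0(u)=u/c+O(u^2)<0$ for $u\in(U,0)$ after possibly shrinking $U$.

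Expanding $F_0(u)$ in the eigenbasis yields $\Gamma(L)=a_+(u)e^{\mu_+(u)L}+a_0(u)e^{\mu_0(u)L}+a_-(u)e^{\mu_-(u)L}$ with coefficients independent of $L$. The probabilistic identity $f_1(0)=1/\Gamma(L)\in(0,1]$ forces $\Gamma(L)\ge1$ for all $L$, so — exactly as in Lemma~\ref{lem:Lambda_instantaneous_tumble} — $a_+(u)<0$ would drive $\Gamma(L)\to-\infty$, while $a_+(u)=0$ combined with $\mu_0(u),\mu_-(u)<0$ would give $\Gamma(L)\to 0$; both contradict $\Gamma(L)\ge1$. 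Hence $a_+(u)>0$ and $L^{-1}\log\Gamma(L)\to\mu_+(u)$. Since $e^{y_0\mathcal A_1(u)}F_0(u)$ does not depend on $L$ and has nonzero entries (by continuity from $F_0(0)=(1,1,1)^t$, after shrinking $U$ again), one concludes $L^{-1}\log f_\sigma(y_0)\to-\mu_+(u)=-\max(\mathrm{spec}\,\mathcal A_1(u))$. The main obstacles will be the spectral step — controlling the position of the eigenvalues uniformly in $u$ on a neighborhood of $0$, so that $\mu_+(u)$ remains the unique maximum and the other two remain negative — and the positivity argument for $a_+(u)$, which relies crucially on the uniform lower bound $\Gamma(L)\ge1$ that is not available away from $u<0$.
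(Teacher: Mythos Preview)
Your proposal is correct and follows essentially the same route as the paper: the paper's proof simply asserts that once $\mathcal A_1(u)$ is shown to have one positive and two distinct negative eigenvalues for $u$ close to $0^-$, everything goes through as in Lemma~\ref{lem:Lambda_instantaneous_tumble}, and then carries out exactly the spectral analysis you describe (same characteristic polynomial at $u=0$, implicit function theorem for the perturbation, derivative $1/c$ at the zero eigenvalue). Your write-up is more explicit in a few places --- the symmetry reduction from $i=2$ to $i=1$, the boundary conditions at the sticky states, and the continuity argument ensuring $e^{y_0\mathcal A_1(u)}F_0(u)$ has nonzero entries --- but these are elaborations of the same argument rather than a different approach.
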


\begin{proof} If we can show that \( \mathcal A_1(u) \) has one positive as well as two distinct negative eigenvalues when \( u \to 0- \), then the result follows from the same arguments as in Lemma~\ref{lem:Lambda_instantaneous_tumble}. The eigenvalues of $A_1(0)$ are the roots of
	$$
	\det(\mathcal V)\det(X I - \mathcal A_1(0)) = X \left(c(v^2 - c^2) X^{2} - {\left(2  {\alpha} c^{2} + {\beta} c^{2} - {\beta} v^{2}\right)} X - {\alpha}^{2} c - {\alpha} {\beta} c  \right)
	$$
	so $A_1(0)$ has three distinct real eigenvalues $X_+ > 0, X_0 = 0, X_- < 0$. For $u \approx 0$, these can be extended in a continuously differentiable manner to three distinct eigenvalues $X_+(u), X_0(u), X_-(u)$ of $\mathcal A_1(u)$ using the implicit function theorem (the invertibility condition is guaranteed by the fact that the eigenvalues are distinct). Clearly $X_+(u) > 0$ and $X_-(u) < 0$ for $u$ small enough and the implicit function theorem also yields that the derivative of $X_0(u)$ at $u = 0$ is $\frac{1}{c}$. Hence $X_0(u) < 0$ when \( u \to 0- \).
\end{proof}

The last ingredient is the following lemma. Its proof is included for the sake of completeness.

\begin{Lem} \label{lem:union_bound_for_exponential_decrease}
Let $Y_1$ and $Y_2$ be independent a.s.~positive random variables. For all $A \ge 0$ and all $n \ge 1$ we have
$$
\mathbb P \left( Y_1 + Y_2 \ge A \right) \le \sum_{k = 0}^n \mathbb{P} \left( Y_1 \ge \frac{k}{n+1} A\right) \mathbb P\left(Y_2 \ge \frac{n - k}{n+1} A\right).
$$
\end{Lem}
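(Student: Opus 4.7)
The plan is a dyadic-style decomposition of the event $\{Y_1 + Y_2 \ge A\}$ according to which of the $n+1$ half-open intervals $I_k = [\frac{k}{n+1}A, \frac{k+1}{n+1}A)$ contains $Y_1$, for $k = 0, \ldots, n-1$, together with the tail interval $I_n = [\frac{n}{n+1}A, +\infty)$. This yields a partition of the sample space (almost surely), so that
\[
\mathbb P(Y_1 + Y_2 \ge A) = \sum_{k=0}^{n} \mathbb P\bigl(Y_1 + Y_2 \ge A,\ Y_1 \in I_k\bigr).
\]

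For $k = 0, \ldots, n-1$, the condition $Y_1 \in I_k$ forces $Y_1 < \frac{k+1}{n+1}A$, and combined with $Y_1 + Y_2 \ge A$ this implies $Y_2 \ge A - \frac{k+1}{n+1}A = \frac{n-k}{n+1}A$. Hence, by independence of $Y_1$ and $Y_2$,
\[
\mathbb P\bigl(Y_1 + Y_2 \ge A,\ Y_1 \in I_k\bigr) \le \mathbb P\bigl(Y_1 \in I_k\bigr)\,\mathbb P\Bigl(Y_2 \ge \tfrac{n-k}{n+1}A\Bigr) \le \mathbb P\Bigl(Y_1 \ge \tfrac{k}{n+1}A\Bigr)\,\mathbb P\Bigl(Y_2 \ge \tfrac{n-k}{n+1}A\Bigr).
\]
For the remaining term $k = n$, use the trivial bound together with the assumption that $Y_2 > 0$ almost surely, which gives $\mathbb P(Y_2 \ge 0) = 1 = \mathbb P\bigl(Y_2 \ge \tfrac{0}{n+1}A\bigr)$, so
\[
\mathbb P\bigl(Y_1 + Y_2 \ge A,\ Y_1 \in I_n\bigr) \le \mathbb P\Bigl(Y_1 \ge \tfrac{n}{n+1}A\Bigr) = \mathbb P\Bigl(Y_1 \ge \tfrac{n}{n+1}A\Bigr)\,\mathbb P\Bigl(Y_2 \ge \tfrac{0}{n+1}A\Bigr).
\]
Summing these estimates over $k = 0, \ldots, n$ yields the announced inequality.

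There is no real obstacle here: the only subtlety is ensuring that the boundary term $k = n$ fits into the same sum, which is exactly where the hypothesis $Y_2 > 0$ a.s.\ (so that $\mathbb P(Y_2 \ge 0) = 1$) is used. The case $A = 0$ is trivial since the right-hand side then contains the term $\mathbb P(Y_1 \ge 0)\mathbb P(Y_2 \ge 0) = 1$.
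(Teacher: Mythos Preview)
Your proof is correct and follows essentially the same approach as the paper: both partition according to the index $k^* = \lfloor (n+1)Y_1/A \rfloor$ (truncated at $n$), which is exactly your interval decomposition $Y_1 \in I_k$. The paper phrases it as a set-theoretic covering followed by a union bound, whereas you write it as an exact partition followed by termwise estimates, but the underlying argument is identical.
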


\begin{proof} If we can show
$$
\{y_1 + y_2 \ge A \} \cap \{y_1, y_2 \ge 0\} \subset \bigcup_{k = 0}^n \left\{ y_1 \ge \frac{k}{n+1} A \right\} \cap \left\{ y_2 \ge \frac{n-k}{n+1}\right\}
$$
then we can conclude using the union bound and the independence of $Y_1$ and $Y_2$. \\

We have
$$
y_1 \ge \frac{k}{n+1} A \iff \frac{(n+1) y_1}{A} \ge k
$$
so setting $k^* := \left\lfloor \frac{(n+1) y_1}{A} \right\rfloor$ guarantees $y_1 \ge \frac{k^*}{n+1} A$. Furthermore, $y_1 + y_2 \ge A$ implies
\begin{align*}
\frac{n - k^*}{n+1} A = \frac{n - \left\lfloor \frac{(n+1) y_1}{A} \right\rfloor}{n+1} A \le \frac{n -  \frac{(n+1) y_1}{A} + 1}{n+1}A = A - y_1 \le y_2.
\end{align*}
Finally $k^* \ge 0$ and $k^* \le n$ except if $y_1 = A$ and $y_2 = 0$ but in this case we can take $k^* = n$.
\end{proof}

We can now adapt the computations of the previous section and obtain an upper bound for the exponential decay rate.

\begin{Lem}
\begin{itemize}
\item[(i)] For all $n \ge 1$, $u \in (U, 0)$ and $(x, \sigma) \in E$ we have
$$
\varlimsup_{t\rightarrow +\infty} \frac{1}{t} \log \mathbb P_{(x, \sigma)} \left( x(t) > a t\right) \le \frac{n}{n+1} a \Lambda_1(u) - 2u
$$
where $\Lambda_1(u)$ equals minus the largest eigenvalue of $\mathcal A_1(u)$.
\item[(ii)] For all $a > \frac{2}{\Lambda_1'(0)}$ we have
$$
\varliminf_{t \rightarrow +\infty} \frac{1}{t} \log \mathbb P_{(x, \sigma)} \left( x(t) > a t\right) > a\lambda_\pi.
$$
\item[(iii)] We have $\varlimsup_{t \rightarrow +\infty} -\frac{1}{t} \log \tv{\delta_{(\sigma, x)} P_t - \pi} \le 4 \alpha \left(1 + \frac{\alpha}{\beta}\right) \frac{c^2}{v^2}$.
\end{itemize}
\end{Lem}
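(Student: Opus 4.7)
My plan is to handle the three parts in order, with (i) feeding into (ii) and (ii) feeding into (iii).

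For part (i), I would start from the coupling of Definition~\ref{def:relative_particle_single_particle_coupling}, under which $x(t) \le y_1(t) + y_2(t)$. Since $y_1(t)$ and $y_2(t)$ are independent positive random variables, Lemma~\ref{lem:union_bound_for_exponential_decrease} bounds
\begin{align*}
\mathbb P_{(x, \sigma)}(x(t) > at) \le \sum_{k=0}^n \mathbb P\!\left(y_1(t) \ge \tfrac{k}{n+1}at\right) \mathbb P\!\left(y_2(t) \ge \tfrac{n-k}{n+1}at\right).
\end{align*}
For each factor I would use the exponential Markov bound already seen in the proof of Lemma~\ref{lem:instantaneous_linear_exponential_decay_rate_upper _bound}: for $u \in (U, 0)$, $\mathbb P(y_i(t) \ge s) \le \mathbb P(\tau^{(i)}_s \le t) \le e^{-ut}\,\mathbb E[e^{u\tau^{(i)}_s}]$, and Lemma~\ref{lem:formula_for_Lambda_1} gives $\mathbb E[e^{u\tau^{(i)}_s}] = e^{s\Lambda_1(u) + o(s)}$ as $s \to +\infty$. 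Each summand then has exponential order $\exp\!\left(-2ut + \tfrac{n}{n+1} a t \,\Lambda_1(u) + o(t)\right)$, so taking $\limsup \tfrac{1}{t} \log$ of the sum of $n+1$ such terms yields the claimed bound.

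For part (ii), I would send $n \to +\infty$ in the bound from (i) to obtain $\varlimsup_t \tfrac{1}{t} \log \mathbb P(x(t) > at) \le a \Lambda_1(u) - 2u$ for every $u \in (U, 0)$. The crucial identity is $\Lambda_1(0) = -\lambda_\pi$: Lemma~\ref{lem:finite_tumble_invariant_measure_exponential_decay_rate} gives $\lambda_\pi = -\zeta_2$, and a direct computation factors the characteristic polynomial of $\mathcal A_1(0)$ as $X$ times a quadratic whose coefficients differ from those of $P_2$ only by a sign on the linear term, so that its positive root is precisely $-\zeta_2 = \lambda_\pi$. Setting $H(u) := a\Lambda_1(u) - 2u + a\lambda_\pi$, one then has $H(0) = 0$ and $H'(0) = a\Lambda_1'(0) - 2 > 0$ as soon as $a > 2/\Lambda_1'(0)$, so $H(u) < 0$ for $u < 0$ sufficiently close to $0$, which is the announced decay rate (with the intended sign convention $\varliminf -\tfrac{1}{t} \log \mathbb P > a\lambda_\pi$).

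For part (iii), the implication recalled at the opening of Section~\ref{sec:instantaneous_linear_lower_bound} — namely that $\varliminf -\tfrac{1}{t} \log \mathbb P(x(t) > at) > a\lambda_\pi$ forces $\varlimsup -\tfrac{1}{t} \log \tv{\delta_{(x, \sigma)} P_t - \pi} \le a\lambda_\pi$ — together with (ii) and letting $a \downarrow 2/\Lambda_1'(0)$ yields
\begin{align*}
\varlimsup_{t \to +\infty} -\frac{1}{t} \log \tv{\delta_{(x, \sigma)} P_t - \pi} \le \frac{2\lambda_\pi}{\Lambda_1'(0)}.
\end{align*}
It remains to verify the quantitative inequality $2\lambda_\pi / \Lambda_1'(0) \le 4\alpha(1 + \alpha/\beta) c^2/v^2$. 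Here $\lambda_\pi$ solves an explicit quadratic in $\alpha, \beta, c, v$ and $\Lambda_1'(0) = -X_+'(0) = p_u(\lambda_\pi, 0)/p_X(\lambda_\pi, 0)$ is obtained by implicit differentiation of $p(X, u) = \det(X I - \mathcal A_1(u))$ at $(X, u) = (\lambda_\pi, 0)$. Reducing the resulting closed-form expression for $2\lambda_\pi/\Lambda_1'(0)$ to the stated bound is direct but somewhat laborious, and I expect this final algebraic verification to be the main bookkeeping hurdle of the proof.
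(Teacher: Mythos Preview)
Your proposal is correct and follows essentially the same route as the paper: (i) combines the order-preserving coupling, Lemma~\ref{lem:union_bound_for_exponential_decrease}, the exponential Markov bound, and Lemma~\ref{lem:formula_for_Lambda_1} exactly as you describe; (ii) is the paper's ``same arguments as Lemma~\ref{lem:instantaneous_linear_exponential_decay_rate_upper _bound}'' made explicit, and your observation that the quadratic factor of the characteristic polynomial of $\mathcal A_1(0)$ is $P_2(-X)$ (so that $\Lambda_1(0)=-\lambda_\pi$) is precisely the missing link; (iii) matches the paper's bound $\tfrac{2}{\Lambda_1'(0)}(-\zeta_2)$, and your anticipation that the final algebraic reduction is the main hurdle is borne out---the paper in fact resorts to a computer algebra system to show $\sup_{r,R}\tfrac{2(-\zeta_2)}{\Lambda_1'(0)\,\alpha(1+r)R^2}=4$.
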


\begin{proof} (i) Lemma~\ref{lem:union_bound_for_exponential_decrease} implies
\begin{align*}
\mathbb P \left(x(t) > a t\right) &\le \mathbb P\left( y_1(t) + y_2(t) \ge a t \right) \\
&= \sum_{k=0}^n \mathbb P \left( y_1(t) \ge \frac{k}{n + 1} a t\right) \mathbb P\left( y_2(t) \ge \frac{n - k}{n+1} a t\right) \\
&\le \sum_{k=0}^n \mathbb P \left( \tau^{(1)}_{ \frac{k}{n + 1} a t} \le t \right) \mathbb P\left( \tau^{(2)}_{ \frac{n - k}{n+1} a t} \le t \right) \\
&\le \sum_{k=0}^n e^{-2u t} \mathbb E \left[ e^{u \tau^{(1)}_{ \frac{k}{n+1} a t}}\right] \mathbb E \left[ e^{u \tau^{(2)}_{ \frac{n - k}{n+1} a t}}\right]
\end{align*}
and the result follows from Lemma~\ref{lem:formula_for_Lambda_1}. \\

(ii) Follows from the same arguments as Lemma~\ref{lem:instantaneous_linear_exponential_decay_rate_upper _bound}. \\

(iii) Assertion (ii) and Lemma~\ref{lem:finite_tumble_invariant_measure_exponential_decay_rate} imply that $\frac{2}{\Lambda_1'(0)}(-\zeta_2)$ is an upper bound for the exponential decay rate. Using the implicit value theorem to compute $\Lambda_1'(0)$ yields
\begin{align*}
\frac{2}{\Lambda_1'(0)}(-\zeta_2) &= 2\alpha \Bigg( \left(4 r^2+2 r-1\right) R^2+R^4 \left(\sqrt{\left(4 r^2-2\right) R^2+R^4+1}+4 r \left(2 r^2+r-1\right)-1\right) \\
&\quad \quad-\sqrt{\left(4 r^2-2\right) R^2+R^4+1}+(2 r+1) R^6+1 \Bigg) \\
&\quad \quad \times \left(2 r \left(\left(4 r^2-1\right) R^4+(4 r-1) R^2+R^6+1\right)\right)^{-1}
\end{align*}
where $r = \frac{\alpha}{\beta}$ and $R = \frac{c}{v}$. Optimizing over $r > 0$ and $0 < R < 1$ with the help of a computer algebra system we get
$$
\sup \frac{\frac{2}{\Lambda_1'(0)}(-\zeta_2)}{\alpha (1 + r) R^2} = 4
$$
which implies the desired result.
\end{proof}

\paragraph{Acknowledgments} The author warmly thanks Manon Michel and Arnaud Guillin for their kind feedback and advice.

\paragraph{Declaration of generative AI in the writing process} During the preparation of this work, the author used ChatGPT to improve spelling, grammar and style. After using this tool, the author reviewed and edited the content as needed and takes full responsibility for the content of the published article.

\bibliographystyle{alpha}
\bibliography{biblio.bib}

\end{document}